\DeclareMathOperator{\supp}{\mathrm{supp}}
\DeclareMathOperator{\Pfin}{\mathcal P_{\mathrm{fin}}}
\DeclareMathOperator{\Sub}{\mathcal S}
\DeclareMathOperator{\Subfin}{\mathcal S_{\mathrm{fin}}}
  \newcommand{\R}{\mathbb R}
  \newcommand{\N}{\mathbb N}
  \newcommand{\Q}{\mathbb Q}
  \newcommand{\C}{\mathbb C}
\DeclareMathOperator{\Real}{\mathrm{Re}}
  \newcommand{\ot}{\otimes}
  \newcommand{\inv}{^{-1}}
  \newcommand{\B}{\mathcal B}
  \renewcommand{\H}{\mathcal H}
  \newcommand{\BH}{\B(\H)}
\renewcommand{\leq}{\leqslant}
\renewcommand{\geq}{\geqslant}
\newcommand{\abs}[1]{\left\lvert #1\right\rvert}
\newcommand{\norm}[1]{\left\lVert #1\right\rVert}
\newcommand{\la}{\left\langle}
\newcommand{\ra}{\right\rangle}
\newcommand{\lowertopo}{\tau^{\mathrm{low}}}
\newcommand{\uppervietoristopo}{\tau^{\mathrm{up}}_{\mathrm{Viet}}}
\newcommand{\upperwijsmantopo}{\tau^{\mathrm{up}}_{\mathrm{Wijs}}}
\newcommand{\wijsmantopo}{\tau_{\mathrm{Wijs}}}
\newcommand{\vietoristopo}{\tau_{\mathrm{Viet}}}
\newcommand{\weaktopo}{\tau_{\mathrm{weak}}}
\newcommand{\strongtopo}{\tau_{\mathrm{strong}}}
\newcommand{\convexbalanced}{\mathcal F_{\mathrm{conv}}^{\mathrm{bal}}}
\newcommand{\closedconvexstar}{\mathcal F^*_{\mathrm{conv}}}
\newcommand{\h}{\mathcal H}
\newcommand{\bh}{\mathcal B(\h)}
\newtheorem{thmi}{Theorem}
\newtheorem{thm}{Theorem}[section]
\newtheorem{cor}[thm]{Corollary}
\newtheorem{lem}[thm]{Lemma}
\newtheorem{prop}[thm]{Proposition}
\newtheorem*{claim}{Claim}
\newenvironment{cproof}{\begin{proof}[Proof of the
        claim]}{\end{proof}}
\theoremstyle{definition}
\newtheorem{df}[thm]{Definition}
\newtheorem{rmk}[thm]{Remark}
\newtheorem*{ack}{Acknowledgments}
\newtheorem{countex}[thm]{Counterexample}
\newcommand{\SSig}{\mathbf\Sigma}
\newcommand{\PPi}{\mathbf\Pi}
 \newcommand{\Pizerothree}{\PPi^{0}_{3}}
\title[Michael's selection thm. and applications to the Maréchal topology]{Michael's selection theorem and applications to the Maréchal topology}
\author{Pierre Fima, François Le Maître, Kunal Mukherjee and Issan Patri}
\keywords{von Neumann Algebras, Selection Theorems, Vietoris topology, Wijsman topology}
\subjclass[2020]{46L10, 54H05}
\begin{document}
\maketitle
\begin{abstract}
The Maréchal topology, also called the Effros-Maréchal topology, is a natural topology one can put on the 
space of all von Neumann subalgebras of a given von Neumann algebra. 
It is a result of Maréchal from 1973 that this topology is Polish as soon as the ambient algebra has separable predual, but the sketch of proof in her research announcement 
appears to have a small gap. 
Our main goal in this paper is to fill this gap by a careful look at the topologies one can put on the space of weak-$*$ closed subspaces of a dual space.
We also indicate how Michael's selection theorem can be used 
as a step towards Maréchal's theorem, and how it simplifies the proof of an important selection result of Haagerup and Winsløw  for the Maréchal topology.
As an application,  we show that the space of finite von Neumann algebras is 
$\Pizerothree$-complete.
\end{abstract}


\section{Introduction}

In a 1973 research announcement, Odile Maréchal introduced a natural Polish topology on the space of von Neumann algebras acting on a given infinite dimensional separable Hilbert space \cite{marechalTopologieStructureBorelienne1973}, 
inducing the standard Borel space structure that Effros had defined thereon in 1965 \cite{effrosBorelspaceNeumann1965}. 
Although Maréchal's results already showed how this topology could lead 
to streamlined proofs for results on Borel subsets of the space of von Neumann algebras, her work faded from memory until Haagerup and Winsløw  published two beautiful papers in which this topology plays a central role.

While their first paper could be seen as a direct continuation of Maréchal's work, in the sense that they showed that many natural maps (including the commutant map) were continuous on the space of von Neumann algebras \cite{haagerupEffrosMarechalTopologySpace1998}, their second work revealed an unexpected connection with Connes' embedding conjecture \cite{haagerupEffrosMarechalTopology2000}. 
Indeed, they proved that the latter was equivalent to the density of the isomorphism class of the injective type III$_1$ factor in the space of all von Neumann algebras acting on a separable infinite dimensional Hilbert space.
This formulation of their result relies on the Baire category theorem, and hence on Maréchal's result that the space of von Neumann algebras on a separable Hilbert space is Polish. 
However, detailed proofs of Maréchal's theorem have never appeared. 
Our first goal in this paper is to provide such a proof. 
Here is a precise statement.

\begin{thmi}[Maréchal]\label{thmi: marechal}
Let $M$ be a von Neumann algebra with separable predual, denote by $\mathcal S(M)$ its space of von Neumann subalgebras (with the same unit as $M$), which we identify to their closed unit balls. 
Then $\mathcal S(M)$ is Polish for the Vietoris topology associated to the weak topology on the space of closed subsets of the unit ball of $(M)_1$. 
\end{thmi}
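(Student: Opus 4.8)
The plan is to exhibit $\mathcal S(M)$ as a $G_\delta$ subset of an ambient Polish hyperspace and then appeal to the fact that a $G_\delta$ subset of a Polish space is Polish. First I would observe that the unit ball $K := ((M)_1, \sigma(M, M_*))$ is weak-$*$ compact by Banach--Alaoglu and metrizable since $M_*$ is separable, so the space $\mathcal F(K)$ of closed subsets of $K$ with the Vietoris topology is compact metrizable; on closed subsets of a compact metric space the Vietoris topology coincides with the Hausdorff-metric topology, and it is this identification (and, more broadly, the comparison of the various topologies one may put on families of weak-$*$ closed subsets of a dual ball) that one must handle with care. The map $N \mapsto (N)_1$ embeds $\mathcal S(M)$ into $\mathcal F(K)$ --- it is injective, since a von Neumann algebra is the linear span of its unit ball --- and transports the Vietoris topology to the topology in the statement, so it suffices to show that the image is $G_\delta$.

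Next I would identify which $F \in \mathcal F(K)$ occur as some $(N)_1$. My claim is: $F = (N)_1$ for a von Neumann subalgebra $N \ni 1_M$ if and only if (i) $F$ is convex; (ii) $F$ is circled, i.e.\ $e^{i\theta}F = F$ for all $\theta$; (iii) $F^* = F$; (iv) $1_M \in F$; (v) $F\cdot F \subseteq F$ (products of unit-ball elements remain in the unit ball); and (vi) $F$ is \emph{saturated}: $n\,(F \cap \tfrac{1}{n}(M)_1) \subseteq F$ for every integer $n \geq 1$. For the nontrivial direction, (i)--(ii) force $F$ to be balanced, so $V := \mathrm{span}(F) = \bigcup_n nF$ is a linear subspace; by (iii)--(v) it is a unital $*$-subalgebra, and its weak-$*$ closure $N$ is a von Neumann subalgebra. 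The inclusion $F \subseteq (N)_1$ is immediate; for the reverse, $(V)_1$ is weak-$*$ dense in $(N)_1$ (a Hahn--Banach separation argument, the linear analogue of Kaplansky's density theorem), and $(V)_1 = \bigcup_n (nF \cap (M)_1) \subseteq F$ precisely by (vi). Condition (vi) is genuinely needed: in $M_2(\C)$, with $u$ the self-adjoint Hadamard unitary, $\{\alpha 1 + \beta u : |\alpha| + |\beta| \leq 1\}$ satisfies (i)--(v) yet is strictly smaller than the unit ball of $\mathrm{span}\{1,u\}$.

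I would then estimate the descriptive complexity of each of the six conditions inside $\mathcal F(K)$. Conditions (i)--(iv) each define a \emph{closed} set: a Hausdorff limit of convex (resp.\ circled, resp.\ self-adjoint) closed sets is again convex (resp.\ circled, resp.\ self-adjoint), because addition, scalar multiplication, the rotations $x \mapsto e^{i\theta}x$ and the adjoint $x \mapsto x^*$ are all weak-$*$ continuous on $M$, and $\{F : 1_M \in F\}$ is closed in every hyperspace. Condition (vi) defines a $G_\delta$: for fixed $n$, the ``bad'' set $\{F : \exists\, x \in F,\ \|x\| \leq \tfrac{1}{n},\ nx \notin F\}$ is the image, under the projection killing the compact factor $(M)_1$, of $\{(x,F) : x \in F\} \cap \{(x,F): \|x\| \leq \tfrac{1}{n}\} \cap \{(x,F): nx \notin F\}$, which is the intersection of two closed sets (note $\tfrac{1}{n}(M)_1$ is weak-$*$ closed and $x \mapsto nx$ is weak-$*$ continuous) with an open set, hence $F_\sigma$; the projection of an $F_\sigma$ along a compact factor is $F_\sigma$, so complementing and intersecting over $n$ produces a $G_\delta$.

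The main obstacle is condition (v): multiplication $(x,y) \mapsto xy$ is only \emph{separately} weak-$*$ continuous on $(M)_1 \times (M)_1$, not jointly, so the argument for (vi) does not apply verbatim. To get around this I would use that a separately continuous map on a product of two compact metric spaces is of Baire class $1$ --- applied to the scalar functions $(x,y) \mapsto \phi(xy)$ for $\phi$ in a countable weak-$*$ dense subset of $(M_*)_1$ --- which makes the preimage of the open set $\{(z,F): z \notin F\}$ under $(x,y,F) \mapsto (xy,F)$ an $F_\sigma$; intersecting with the closed conditions $x \in F$ and $y \in F$ and projecting away the compact variables $x,y$ shows $\{F : \exists\, x,y \in F,\ xy \notin F\}$ is $F_\sigma$, hence $\{F : F\cdot F \subseteq F\}$ is $G_\delta$. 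Combining everything, $\mathcal S(M)$ is the intersection of countably many $G_\delta$ subsets of the Polish space $\mathcal F(K)$, so it is $G_\delta$, hence Polish. (Alternatively, and in the spirit of the paper, one could first prove that the space of all weak-$*$ closed subspaces of $M$ is Polish --- which is where Michael's selection theorem is a convenient tool --- and then cut out the von Neumann subalgebras inside it using conditions (iii)--(v) alone.)
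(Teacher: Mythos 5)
Your proof is essentially correct and takes a genuinely different route from the paper's. The paper first proves that the grassmanian $\mathcal G_{w*}(M)$ of \emph{all} ultraweakly closed subspaces is Polish, via a duality theorem identifying it (through the polar map) with the space of norm-closed subspaces of $M_*$ carrying the Wijsman topology, for which Beer's theorem applies; it then cuts out $\mathcal S(M)$ as a $G_\delta$ therein using Michael's selection theorem to produce continuous maps $x_n$ with $(x_n(V))_n$ weakly dense in $(V)_1$, so that closure under multiplication becomes the countable conjunction $x_n(V)x_m(V)\in V$. You instead work directly inside the compact metrizable hyperspace $\mathcal F\bigl(((M)_1,\weaktopo)\bigr)$ and characterize the unit balls of von Neumann subalgebras by six conditions. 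Your saturation condition (vi) is exactly the alternative the paper sketches in the remark following Corollary \ref{cor: weakstar grassmanian is Polish}, and your treatment of condition (v) replaces the selection maps by an existential quantifier over $x,y$, eliminated by projecting along the compact factors $(M)_1\times(M)_1$ (using that such projections send $F_\sigma$ to $F_\sigma$). Your approach is more elementary and self-contained --- it needs neither Beer's theorem nor Michael's selection theorem --- at the cost of not producing the continuous selections, which the paper reuses for the Haagerup--Winsl{\o}w theorem and the $\Pizerothree$-completeness computation. Both proofs hinge on the same analytic ingredient: multiplication on $((M)_1,\weaktopo)^2$ is Baire class $1$ because it is separately continuous on a compact metrizable square.

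One justification needs repair. In the nontrivial direction of your characterization you assert that $(V)_1$ is weak-$*$ dense in $(N)_1$, where $N=\overline{V}^{w*}$, ``by a Hahn--Banach separation argument, the linear analogue of Kaplansky's density theorem''. That linear statement is false for general subspaces of a dual Banach space: in $\ell^1=(c_0)^*$ the hyperplane $V=\{v\colon \sum_n v_n=0\}$ is weak-$*$ dense, yet every $v\in(V)_1$ satisfies $\abs{v_1}\leq 1/2$, so $\delta_1\notin\overline{(V)_1}^{w*}$. (Separation does give $x\in M_*$ with $\sup_{v\in(V)_1}\abs{\la v,x\ra}\leq 1$, but nothing forces $\abs{\la w,x\ra}\leq 1$ for $w\in(N)_1$, since the norm of $x$ as a functional on $V$ may be strictly smaller than its norm on $N$ --- this is the same delicacy behind the paper's Counterexample \ref{countex: not compact}.) Fortunately your conclusion survives by a shorter argument: condition (vi) gives $(V)_1=\bigcup_n\bigl(nF\cap(M)_1\bigr)=F$, which is weak-$*$ compact, so the Krein--\v{S}mulian theorem shows that $V$ itself is already weak-$*$ closed, whence $N=V$ and $(N)_1=F$. (Alternatively, since $V$ is a unital $*$-subalgebra, the genuine Kaplansky density theorem applies.) With that substitution the argument is complete.
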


There is actually a small gap in the sketch of proof presented in \cite{marechalTopologieStructureBorelienne1973}: the space of unit balls of ultraweakly closed subspaces of $\BH$ is not compact (see Counterexample \ref{countex: not compact}).
We fill this gap by showing that the space of 
ultraweakly closed subspaces of $\BH$ is nevertheless Polish as soon as $\mathcal H$ is separable, which suffices to make Maréchal's proof go through (see Theorem \ref{thm:polish_vN}). 

We study more generally the space of weak-$*$ closed subspaces of 
a dual space $E^*$, and show the latter is always Polish as soon as $E$ is separable (see Corollary \ref{cor: weakstar grassmanian is Polish})
Our Theorem \ref{thm: duality} moreover shows that the well-known polar identification between
norm closed subspaces of $E$ and weak-$*$ closed subspaces of the dual is a homeomorphism, endowing the corresponding grassmanian spaces with the following topologies:
\begin{itemize}
    \item the space $\mathcal G_{\norm\cdot}(E)$ of norm closed subspaces of $E$ is endowed with the Wijsman topology associated to the norm;
    \item as in Theorem \ref{thmi: marechal}, the space $\mathcal G_{w*}(E^*)$ of weak-$*$ closed subspaces of $E^*$ is endowed with the Vietoris topology induced by the space of weak-$*$ closed subsets of its unit ball $(E^*)_1$, identifying a subspace to its closed unit ball.
\end{itemize}

Finally, as our title suggests, our paper relies on a version of Michael's selection theorem  that we prove in Section \ref{sec: michael}. 
This theorem allows one to continuously select elements from closed convex sets under a weak continuity assumption with respect to the lower topology (see Section \ref{sec: lower topo} for its definition). One also has to make some assumptions on the ambient normed space. We did not strive for generality and focused on having a short self-contained proof instead; a more general version of our Theorem \ref{thm: michael selection} can be found in \cite[Thm.~3.8.8]{sakaiGeometricAspectsGeneral2013}. Our density result (Corollary \ref{cor: michael dense selection}) appears not to have been explicitly stated in the literature but is of crucial importance towards our two applications.

The first application is the continuous selection of a dense subset of the unit ball of an ultraweakly closed subspace, a result due to Maréchal which she proved via a different approach (Proposition \ref{prop:weakly continous choice in the unit ball}). This selection result is a key step in her proof of the polishness of the Maréchal topology.
The second application is the Haagerup-Winsløw  selection theorem, which is the following very useful statement.

\begin{thmi}[Haagerup-Winsløw]\label{thmi: haagerup winslow}
    Let $M$ be a von Neumann algebra with separable predual, denote by $\mathcal S(M)$
    its space of subalgebras (with the same unit as $M$), endowed with the Maréchal topology.
    There is a sequence of Maréchal to strong-$*$ continuous maps $x_n:\mathcal S(M)\to M$ such that for every $N\in\mathcal S(M)$, $\{x_n(N)\colon n\in\N\}$ is strong-$*$ dense in $(N)_1$.
\end{thmi}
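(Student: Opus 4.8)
The overall strategy is to deduce this from the general machinery built earlier in the paper, specifically Corollary \ref{cor: michael dense selection} (the dense-selection version of Michael's theorem) and Maréchal's own continuous-choice result (Proposition \ref{prop:weakly continous choice in the unit ball}). First I would recall that by Theorem \ref{thmi: marechal} the space $\mathcal S(M)$ with the Maréchal topology is Polish, and that a subalgebra $N$ is identified with the closed unit ball $(N)_1$, a weak-$*$ closed (equivalently, weakly closed, since the predual is separable) convex balanced subset of the ball $(M)_1$ of the dual space $M = (M_*)^*$. The key point is that the map $N \mapsto (N)_1$ is, by the very definition of the Maréchal topology as a Vietoris topology on the space of weak-$*$ closed subsets of $(M)_1$, continuous from $\mathcal S(M)$ into the hyperspace equipped with that Vietoris topology; in particular it is continuous for the lower topology component (see Section \ref{sec: lower topo}). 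This is precisely the hypothesis needed to feed the set-valued map $N \mapsto (N)_1$ into the Michael selection machinery.

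\smallskip

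The second ingredient is that the values $(N)_1$ are the right kind of sets: each is a nonempty, convex, weak-$*$ closed, balanced subset of the fixed weak-$*$ compact convex set $(M)_1$, hence in particular weak-$*$ compact. One subtlety is that Michael's theorem as stated in Section \ref{sec: michael} is proved for a normed ambient space, so one should apply it after replacing the weak-$*$ topology by the norm topology on $M$ via the duality results of Section \ref{sec: duality}: Theorem \ref{thm: duality} tells us that the passage between the Vietoris/weak-$*$ picture and the Wijsman/norm picture on the relevant grassmannians is a homeomorphism. Thus the set-valued map $N \mapsto (N)_1 \subseteq M$, viewed with the norm topology on $M$, is lower semicontinuous in the sense required by Corollary \ref{cor: michael dense selection}, with weakly compact convex values. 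Applying that corollary yields a \emph{sequence} of Maréchal-to-norm continuous maps $y_n : \mathcal S(M) \to M$ with $y_n(N) \in (N)_1$ for all $n$ and $\{y_n(N) : n \in \N\}$ norm-dense in $(N)_1$; a fortiori this set is strong-$*$ dense in $(N)_1$, since the norm topology is finer than the strong-$*$ topology on bounded sets, and each $y_n$ is a fortiori Maréchal-to-strong-$*$ continuous. Setting $x_n := y_n$ finishes the proof.

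\smallskip

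I expect the main obstacle to be checking the lower semicontinuity hypothesis of the selection theorem in the precise technical form required — i.e., verifying carefully that the tautological map $N \mapsto (N)_1$ really does satisfy the weak/lower-topology continuity assumption in the statement of Theorem \ref{thm: michael selection}, rather than just being continuous for the full Vietoris topology, and that no extra ambient-space hypothesis (separability, the role of a dense sequence in $M_*$ to witness the weak topology) is being silently used. A related minor point is to make sure the Michael theorem is applied to a genuinely normed space: one works inside $M$ with its operator norm, uses that $(M)_1$ is weakly compact convex and metrizable on bounded sets (separable predual), and notes that the convex balanced sets $(N)_1$ are exactly the ones arising. Once the hypotheses are matched up, the conclusion — including the upgrade from norm-density to strong-$*$-density and from norm-continuity to strong-$*$-continuity of the selectors — is essentially formal.
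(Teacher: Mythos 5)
Your overall strategy (feed the tautological set-valued map $N\mapsto (N)_1$ into Corollary \ref{cor: michael dense selection}) is the right one and matches the paper, but the way you instantiate it breaks down, and the breakdown hides the one genuinely operator-algebraic ingredient of the theorem. You propose to run Michael's theorem with the \emph{operator norm} on $M$. This cannot work for two reasons. First, Corollary \ref{cor: michael dense selection} requires the convex set $C$ to be separable for the chosen norm, and $(M)_1$ is not norm-separable in general (already for $M=\BH$ or $M=\ell^\infty(\N)$); indeed the conclusion you would extract --- a countable norm-dense subset of $(N)_1$ --- is simply impossible for $N=\BH$. Second, the map $N\mapsto (N)_1$ is \emph{not} lower continuous for the lower topology associated to the norm on $M$: the Maréchal topology only controls weak neighborhoods, and your appeal to Theorem \ref{thm: duality} is a non sequitur here --- that theorem identifies $\mathcal G_{w*}(M)$ with the space of norm-closed subspaces of the \emph{predual} $M_*$ under the Wijsman topology of $M_*$; it says nothing about the norm topology on $M$ itself.

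What is actually needed, and what the paper does, is to apply the selection corollary with a norm on $M$ whose restriction to $(M)_1$ induces a complete separable metric compatible with the \emph{strong-$*$} topology (Lemma \ref{lem: norm for strong star}); on the unit ball the strong-$*$ topology is Polish, so separability is no longer an issue. The remaining --- and key --- point is then to verify lower continuity of $N\mapsto (N)_1$ for the lower topology associated to the strong-$*$ topology, knowing only continuity for the weak one. This is Proposition \ref{prop: same lower topologies on marechal}: on $\mathcal S(M)$ the two lower topologies coincide, and the proof genuinely uses that the values are von Neumann algebras (Kaplansky density to stay in the unit ball, decomposition of unit-ball elements into four unitaries, and Lemma \ref{lem: unitary have same weak and strong nbhd on B1} identifying weak and strong-$*$ neighborhoods of unitaries on $(M)_1$). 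Your proposal treats this step as ``essentially formal,'' but it is precisely where the theorem's content lies; without it one only recovers Maréchal's weak selection result (Proposition \ref{prop:weakly continous choice in the unit ball}), not the strong-$*$ upgrade.
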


Just like Maréchal's proof of her selection result, the proof of Haagerup and Winsløw does require some Hilbert space ideas, endowing $\BH$ with a pre-Hilbert space structure. However, our proof does not. We simply identify the lower topologies associated to the weak topology and to the strong-$*$ topology (Proposition \ref{prop: same lower topologies on marechal}), and then we apply Michael's selection theorem.

As an application of Haagerup-Winsløw's selection theorem, we conclude our paper with a complexity calculation for the subset of finite von Neumann algebras acting on a separable Hilbert space.
Indeed, the presence of a topology on the space of von Neumann subalgebras of $\BH$ allows one 
to analyze in a much more precise manner Borel subsets thereof, via the so-called Borel hierarchy which roughly keeps track of how many countable unions/intersections one needs 
to carry out in order to obtain a specific subset, starting from closed/open subsets.
The following result says that the set of finite von Neumann algebras is a countable intersection of countable unions of closed subsets, and that this is sharp. 

\begin{thmi}\label{thmi: complexity finite}
    Let $\mathcal H$ be a separable infinite-dimensional Hilbert space.
    Denote by $\Subfin(\BH)\subseteq\Sub(\BH)$ the set of finite von Neumann subalgebras.
    Then $\Subfin(\BH)$ is $\Pizerothree$-complete for the Maréchal topology.
\end{thmi}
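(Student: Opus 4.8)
The plan is to establish the two halves of $\Pizerothree$-completeness separately: first that $\Subfin(\BH)$ belongs to the class $\Pizerothree$, and then that it is $\Pizerothree$-hard via a continuous reduction from a known $\Pizerothree$-complete set. For the upper bound, I would use the Haagerup--Winsløw selection theorem (Theorem~\ref{thmi: haagerup winslow}): fix Maréchal-to-strong-$*$ continuous maps $x_n\colon\Sub(\BH)\to\BH$ with $\{x_n(N):n\in\N\}$ strong-$*$ dense in $(N)_1$. A von Neumann algebra $N$ is finite if and only if it admits a faithful normal tracial state; equivalently, using the dense selection, one can write finiteness as a countable combination of conditions on the $x_n(N)$. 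Concretely, $N$ is finite iff for every $n$ there is a normal tracial state witnessing that the ``$\ell^2$-type'' norm behaves correctly, or more robustly: $N$ is finite iff the family $(x_n(N))_n$ admits no sequence realizing a proper isometry. The cleanest route is to recall that $N$ is finite iff $\sup$ over finitely many $x_{n_1}(N),\dots,x_{n_k}(N)$ of a quantity measuring failure of the parallelogram/trace inequality vanishes; since each such quantity is a Maréchal-Borel (indeed, by continuity of the $x_n$ and of the relevant vector functionals, of low Borel complexity) function of $N$, one gets $\Subfin(\BH)$ as a countable intersection (over the finite tuples and rational thresholds) of countable unions of closed sets, i.e.\ $\Pizerothree$.

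For the lower bound I would reduce from the canonical $\Pizerothree$-complete set
\[
  P_3=\Bigl\{\,x\in 2^{\N\times\N}\ :\ \forall m\ \exists^{<\infty} n\ \ x(m,n)=1\,\Bigr\},
\]
i.e.\ the set of matrices each of whose rows is eventually $0$. The idea is to build a Maréchal-continuous map $x\mapsto N_x\in\Sub(\BH)$ so that $N_x$ is finite precisely when $x\in P_3$. Using a fixed decomposition $\mathcal H=\bigoplus_{m}\mathcal H_m$, I would let $N_x=\bigoplus_m N_{x}^{(m)}$ be a direct sum of algebras, where the $m$-th block $N_x^{(m)}$ is a ``switch'' that is a finite algebra (say a matrix algebra or $L^\infty$ of a probability space) when row $m$ of $x$ is eventually $0$, and is an infinite factor (e.g.\ a copy of $\BH$ on an infinite-dimensional summand, or the hyperfinite II$_\infty$) when row $m$ has infinitely many $1$'s. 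Then $N_x$ is finite iff \emph{every} block is finite iff $x\in P_3$. The construction should be arranged so that the dependence on the individual coordinates $x(m,n)$ is locally finite and the resulting map into $\Sub(\BH)$ is continuous for the Maréchal topology; this is where one exploits that changing finitely many coordinates changes $N_x$ ``continuously,'' and that the Maréchal topology is generated by the weak topology on unit balls, so approximating a direct-sum algebra block by block is legitimate.

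I expect the main obstacle to be the hardness direction, specifically making the ``switch'' both genuinely discontinuous in the right way and yet assembling to a globally Maréchal-continuous map: one must ensure that a block which is about to flip from finite to infinite does so in the limit, i.e.\ that a sequence of finite blocks whose ``size parameter'' tends to infinity converges in the Maréchal topology to an infinite block, while the reduction target still lands in $\Subfin$ exactly on $P_3$. The standard way to handle this is to realize the finite approximants as finite-dimensional algebras of growing dimension inside a fixed infinite-dimensional summand, use that $M_k(\C)\to\BH(\ell^2)$ in the Maréchal topology as $k\to\infty$ (an explicit convergence one checks from the definition via the weak topology on unit balls, using Theorem~\ref{thmi: marechal} or Theorem~\ref{thm: duality}), and verify by a direct $\eps$-argument on finitely many predual functionals that the global direct sum depends continuously on $x$. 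Once continuity and the equivalence $N_x\text{ finite}\iff x\in P_3$ are in hand, $\Pizerothree$-hardness of $P_3$ transfers to $\Subfin(\BH)$, and combined with the upper bound this yields $\Pizerothree$-completeness.
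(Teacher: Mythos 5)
Your overall architecture (selection theorem for the upper bound, a block-diagonal reduction from the set of $\{0,1\}$-matrices all of whose rows are finite for the lower bound) matches the paper's, and your hardness direction is essentially the paper's construction: the paper takes $f(S)=\bigl(\bigoplus_m\mathcal B(\ell^2(\{m\}\times S_m))\bigr)\oplus\C(1-\pi_S)$, so that $f(S)$ is finite iff every $S_m$ is finite, and verifies Maréchal continuity exactly as you propose, by checking continuity of $S\mapsto\norm{\omega_{\restriction f(S)}}$ for a norm-dense set of predual functionals $\omega$ (Proposition \ref{PropConvergenceSubspace}). The only detail you omit there is the summand $\C(1-\pi_S)$ needed so that $f(S)$ has the same unit as $\BH$, which is routine; the continuity verification you defer is carried out in the paper by an explicit computation of $\norm{\omega_{\restriction f(S)}}$ for vector functionals.

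The genuine gap is in the upper bound. You correctly reduce the problem to expressing finiteness of $N$ as a $\forall\exists\forall$ condition on the selected dense family $(x_n(N))_n$, but none of the characterizations you gesture at actually has that logical form. ``Admits a faithful normal tracial state'' involves an existential quantifier over an uncountable set of states; ``admits no sequence realizing a proper isometry'' is, on its face, the complement of a condition quantifying over convergent subsequences, hence a priori $\PPi^1_1$ rather than visibly arithmetical; and ``the supremum over finite tuples of a quantity measuring failure of the trace inequality vanishes'' is a countable intersection of closed conditions, i.e.\ closed, which cannot be correct since a $\Pizerothree$-complete set is not $\PPi^0_2$. The missing input is Sakai's characterization (Theorem \ref{thm: chara finite}): $N$ is finite iff the adjoint map is \emph{uniformly} continuous on $(N)_1$ for a fixed metric $d_s$ inducing the strong topology. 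Applied to the dense sequence this reads: for every $\epsilon\in\Q^{>0}$ there is $\delta\in\Q^{>0}$ such that for all $n,m$, $d_s(x_n(N),x_m(N))<\delta$ implies $d_s(x_n(N)^*,x_m(N)^*)\leq\epsilon$; each innermost condition is closed by continuity of the $x_n$, and the quantifier pattern is exactly $\Pizerothree$. Without an equivalence of this precise shape the upper bound does not go through, so you should either adopt this characterization or supply another with the same $\forall\exists\forall$ structure.
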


The fact that the set of finite von Neumann algebras is Borel is due to Nielsen \cite[Thm.~3.2]{nielsenBorelSetsNeumann1973}, but his proof relies on the separation theorem, and thus does not provide any additional complexity information. 
Our proof is much more elementary and relies on the fact that finite  von Neumann algebras are exactly those for which the adjoint map is uniformly continuous for the strong topology on bounded sets (see Theorem \ref{thm: chara finite}). 
Finally, this result should be compared to Haagerup and Winslow's Corollary 4.10 from \cite{haagerupEffrosMarechalTopologySpace1998}, which implies that the set of finite factors 
is $\mathbf{\Sigma}^0_2$-complete in the Polish space of factors.
The increase in complexity from factors to general von Neumann algebras reflects the fact that $\BH$ contains a copy of $\bigoplus_{n\in\N}\BH$, and that $\bigoplus_n M_n$ is finite iff for every $n\in\N$, the von Neumann algebra $M_n$ is finite (see the proof of Theorem \ref{thm: complexity finite}). 

Here is an outline of the paper. In Section \ref{sec:prelim}, we collect some preliminary
results on Polish spaces and on the topologies on $\BH$. In Section \ref{sec: topologies}, we review three topologies one can put on the space of closed subsets of a topological space: the lower topology, the Vietoris topology (in a compact Hausdorff space) and the Wijsman topology (in a metric space). Section \ref{sec: grassman} is devoted to the proof of our duality theorem, which identifies the Wijsman topology on closed subspaces of a normed space to the Vietoris topology on unit balls of weak-$*$ closed subspaces of the dual. In Section \ref{sec: michael}, we state and prove a version of Michael's theorem adapted to our needs. Finally Section \ref{sec: applications} is devoted to three applications, namely Maréchal's theorem (Theorem \ref{thmi: marechal}, see Theorem \ref{thm:polish_vN}), the Haagerup-Winsløw  selection theorem (Theorem \ref{thmi: haagerup winslow}, see Theorem \ref{thm:*strongly continous choice in the unit ball}), and finally Theorem \ref{thmi: complexity finite} (see Theorem \ref{thm: complexity finite}).

\begin{ack}
The present paper is an offspring of a long term project on the 
Maréchal topology. 
We have had the chance to carry out this 
project in several places dedicated to international collaboration, and 
we thus gratefully acknowledge funding from MFO,  ISI Delhi, SERB (SRG/2022/001717),  IRL ReLaX, IIT Madras (IoE Phase II, SB22231267MAETWO008573), ICISE, CEFIPRA (6101-1) and ANR (Project AODynG: ANR-19-CE40-0008 and Project ANR ANCG: ANR-19-CE40-0002).
\end{ack}

\section{Preliminaries}\label{sec:prelim}

\subsection{Polish spaces}

By definition a \textbf{Polish space} is a topological space which is separable and whose topology admits a compatible metric.
Since metrizable spaces are separable iff they are second-countable (in other words, their topology admits a countable basis), any closed subset of a Polish space is Polish for the induced topology.
Also note that a compact Hausdorff space is Polish iff it is second-countable, because Urysohn's metrization theorem provides  a compatible metric which has to be complete by compactness.

A subset of a Polish space $X$ is called $G_\delta$  if it can be written as a countable intersection of open subsets of $X$.
One of the most important facts regarding Polish spaces is that $G_\delta$ subsets of Polish spaces are exactly those which are Polish for the induced topology, see \cite[Thm.~3.1]{kechrisClassicaldescriptiveset1995} (in particular, closed subsets of Polish spaces are $G_\delta$, but this actually holds in any metrizable space).

Finally, a map $f:X\to Y$ between second-countable topological spaces $X$ and $Y$ is called \textbf{Baire class 1} if for every closed subset $F$ of $Y$, $f\inv(F)$ is a $G_\delta$ set, or equivalently if for every open subset $U$ of $Y$, $f\inv(U)$ is $F_\sigma$, namely it can be written as a countable union of closed subsets of $X$. Baire class 1 functions can be thought of being just below continuity in the "ladder" of regularity properties of functions. Indeed, key examples of Baire class 1 functions are lower semi-continuous and upper semi-continuous real valued functions. 

\subsection{The finite Borel hierarchy}

We now present an exposition of the first steps of the Borel hierarchy of sets, aimed towards readers who may not be familiar with (descriptive) set theory. The reader acquainted with this hierarchy may
skip the following two subsections after perhaps recalling Lemma \ref{lem: sequence of finite sets is Pi 0 3 complete}, which will be important in the sequel.

Given a Polish space $X$, the (finite) Borel hierarchy allows one to classify basic Borel subsets by the number of times one needs to take countable unions  and complements to obtain it from open subsets. This hierarchy is defined by induction on $n\geq 1$ by:
\begin{itemize}
    \item $\SSig^0_1(X)$ consists of all open subsets of $X$;
    \item $\PPi^0_1(X)$ consists of all closed subsets of $X$;
    \item for all $n\geq 1$, $\SSig^0_{n+1}(X)$ is the set of all
    countable unions of elements of $\PPi^0_n(X)$;
    \item for all $n\geq 1$, $\PPi^0_{n+1}(X)$ is the set of all countable intersections of elements of $\SSig^0_n(X)$.
\end{itemize}

Note that we have already encountered sets in the the basic levels of the hierarchy: $\PPi^0_2(X)$ consists precisely of the $G_\delta$ subsets of $X$, while $\SSig^0_2(X)$ consists of all $F_\sigma$ subsets of $X$. Further, this hierarchy also allows us to understand the precise way in which Baire class 1 functions are a generalisation of continuous functions - for continuous functions, the inverse image of $\PPi^0_1(X)$ subsets (viz. closed subsets of $X$) are also in $\PPi^0_1(X)$, while for Baire class 1 functions, inverse image of $\PPi^0_1(X)$ subsets of $X$ are in the larger class of $\PPi^0_2(X)$ sets. Indeed, the important fact that every closed subset is $G_\delta$ yields that $\PPi^0_1(X)\subseteq\PPi^0_2(X)$, which in turn yields by induction that for every $n\geq 1$, we have $\SSig^0_n(X)\subseteq \SSig^0_{n+1}(X)$ and 
$\PPi^0_n(X)\subseteq\PPi^0_{n+1}(X)$. 
Also observe that $A\subseteq X$ is $\SSig^0_n(X)$ if and only if its complement is in $\PPi^0_n(X)$. Finally, we note that subsets belonging to $\SSig^0_n(X)$ (resp. in $\PPi^0_n(X)$) are often simply called
$\SSig^0_n$ (resp. $\PPi^0_n$) subsets of $X$.


A crucial observation is that all these classes are stable under \emph{continuous reduction}, which is defined as follows: given Polish spaces $X$ and $Y$ and subsets $A\subseteq X$ and $B\subseteq Y$, we say that $A$ \textbf{continuously reduces} to $B$, and denote by $A\leq_c B$, if there exists a continuous function $f:X\to Y$ such that $A=f\inv(B)$. Then, it is easy to check that if $B\in \SSig^0_n(Y)$ (resp.~$B\in\PPi^0_n(Y)$) and we have $A\subseteq X$ such that $A\leq_c B$, then $A\in \SSig^0_n(X)$ (resp.~$A\in\PPi^0_n(X)$).

We now define completeness, allowing us to make sense of subsets inside a given complexity class which are as complicated as possible. 

Let us recall that a Polish space is \textbf{zero-dimensional} when its topology
admits a basis consisting of clopen subsets.
The Baire space $\N^\N$ is arguably the
most important example of zero-dimensional Polish space, since it surjects continuously onto any Polish space (see \cite[Thm.~7.9]{kechrisClassicaldescriptiveset1995}).
 In particular, given any subset of a Polish space $Y$, there is a subset of $\N^\N$ which continuously reduces to it.
This motivates the following definition.

\begin{df}
Let $Y$ be a Polish space. Then, for any $n\geq 1$, a subset $B\subseteq Y$ is called $\SSig^0_n$\textbf{-hard} (resp.~$\PPi^0_n$\textbf{-hard}) if whenever $X$ is a zero-dimensional Polish space and $A\in\SSig^0_n(X)$ (resp.~$A\in\PPi^0_n(X)$), we have $A\leq_c B$. 

Finally, $B\subseteq Y$ is $\SSig^0_n$-complete if it is both $\SSig^0_n$ and $\SSig^0_n$-hard, and $\PPi^0_n$-complete
if it is both $\PPi^0_n$ and $\PPi^0_n$-hard.
\end{df}

\begin{rmk}
Since every zero-dimensional Polish space is homeomorphic to a closed subspace of $\N^\N$ (see \cite[Thm~7.8]{kechrisClassicaldescriptiveset1995}), one could equivalently always take $X=\N^\N$ in the above definition.
\end{rmk}
It follows from \cite[Thm.~22.4]{kechrisClassicaldescriptiveset1995} that if $B\subseteq Y$ is  $\SSig^0_n$-complete then it is $\SSig^0_n$ but not $\PPi^0_n$, and dually that if $B\subseteq Y$ is $\PPi^0_n$-complete then it is $\PPi^0_n$ but not $\SSig^0_n$. 
\begin{rmk}
Using \emph{Borel determinacy},
Wadge proved
that the converse is true: if $B\subseteq Y$ is $\SSig^0_n$ but not $\PPi^0_n$ then it is $\SSig^0_n$-complete; dually if $B\subseteq Y$ is $\PPi^0_n$ but not $\SSig^0_n$ then it is $\PPi^0_n$-complete, see \cite[Thm.~22.10]{kechrisClassicaldescriptiveset1995}.
\end{rmk}

It follows from the transitivity of $\leq_c$ that if $A$ is $\SSig^0_n$-hard, and $A\leq_c B$ then $B$ is $\SSig^0_n$-hard, and taking complements that 
if $A$ is $\PPi^0_n$-hard and $A\leq_c B$ then $B$ is $\PPi^0_n$-hard.

\begin{rmk}
The finite Borel hierarchy which we present here does not exhaust all Borel sets. 
The interested reader unfamiliar with ordinals
should consult \cite{srivastavacourseBorelsets1998} for a self-contained account of the full Borel hierarchy.
\end{rmk}

\subsection{Climbing up the Borel hierarchy}
We now give a short self-contained account of two
basic examples of benchmark sets in the Borel hierarchy.

\begin{lem}\label{lemma:finite sets are sigma 0 2 complete}
    $\Pfin(\N)$ is a $\SSig^0_2$-complete subset of $\{0,1\}^\N$.
\end{lem}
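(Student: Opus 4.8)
The plan is to split the argument into two parts: first, that $\Pfin(\N)$ is $\SSig^0_2$, and second, that it is $\SSig^0_2$-hard. For the first part, I would identify a subset $A \subseteq \N$ with its indicator function $\chi_A \in \{0,1\}^\N$, so that $\Pfin(\N)$ corresponds to the set of $x \in \{0,1\}^\N$ with finite support. Then one simply writes
\[
\Pfin(\N) = \bigcup_{N \in \N} \{ x \in \{0,1\}^\N : x(n) = 0 \text{ for all } n \geq N \}.
\]
Each set in this union is closed (indeed clopen), being defined by a condition on coordinates that is closed in the product topology, so $\Pfin(\N)$ is a countable union of closed sets, i.e.\ $\SSig^0_2$ (equivalently $F_\sigma$).

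For hardness, by the remark in the excerpt it suffices to continuously reduce a fixed "universal" $\SSig^0_2$ subset of a zero-dimensional Polish space to $\Pfin(\N)$; equivalently, I would directly exhibit, for an arbitrary zero-dimensional Polish space $X$ and an arbitrary $\SSig^0_2$ set $A \subseteq X$, a continuous map $f : X \to \{0,1\}^\N$ with $A = f\inv(\Pfin(\N))$. Write $A = \bigcup_{k \in \N} F_k$ with each $F_k$ closed; since $X$ is zero-dimensional metrizable we may even take the $F_k$ clopen and increasing, but closed suffices. For each $k$, since $F_k$ is closed in a metric space there is a continuous function $g_k : X \to [0,1]$ with $g_k\inv(0) = F_k$. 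The idea is to build $f(x) \in \{0,1\}^\N$ so that its support encodes, for each $k$, whether $x$ is "close enough" to $F_k$: concretely, arrange a block structure so that $x \in F_k$ forces only finitely many $1$'s coming from the $k$-th block while $x \notin F_k$ forces infinitely many. A clean way: let $f(x)$ have a $1$ in position $\langle k, m\rangle$ (under a fixed bijection $\N^2 \cong \N$) if and only if $d(x, F_k) < 1/m$, or equivalently $g_k(x) < 1/m$ — wait, to keep continuity into $\{0,1\}^\N$ one must ensure each coordinate is a \emph{continuous} $\{0,1\}$-valued function, so the strict inequality $g_k(x) < 1/m$ should be replaced by membership in a clopen set; in a zero-dimensional space one can choose clopen neighborhoods $U_{k,m} \supseteq F_k$ with $\bigcap_m U_{k,m} = F_k$, and set the $\langle k,m\rangle$-coordinate of $f(x)$ to be $\mathbf 1_{U_{k,m}}(x)$. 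Then each coordinate of $f$ is continuous (indicator of a clopen set), so $f$ is continuous; and $f(x)$ has finite support iff for every $k$ only finitely many $m$ satisfy $x \in U_{k,m}$ AND only finitely many $k$ contribute at all — this needs care, so instead I would use a single enumeration: pick clopen $U_m \downarrow$ with $\bigcap_m U_m = $ suitable set, arranged so that $f(x)$ finite $\iff$ $x \in A$. The standard trick that makes this work is: for $A = \bigcup_k F_k$, define the $\langle k,m \rangle$-coordinate to be $1$ iff $x \in U_{k,m}$, where for fixed $k$ the $U_{k,m}$ decrease to $F_k$; then if $x \in A$, pick $k_0$ with $x \in F_{k_0}$, and the $k_0$-row has all coordinates $1$ — infinite support. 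That is backwards, so one reverses the roles: make $f(x)$ finite when $x \in A$ by encoding the \emph{complement}. Since $A^c = \bigcap_k F_k^c$ is $\PPi^0_2$, write $A^c = \bigcap_k V_k$ with $V_k$ open, $V_k = \bigcup_m W_{k,m}$ with $W_{k,m}$ clopen increasing in $m$; set coordinate $\langle k,m\rangle$ of $f(x)$ to $1$ iff $x \in W_{k,m}$. If $x \in A^c$ then for each $k$ eventually $x \in W_{k,m}$, giving infinitely many $1$'s; if $x \in A$ then some $k$ has $x \notin V_k$, so the $k$-row is identically $0$ — but other rows may still be infinite. Thus one must further collapse: use only finitely many coordinates per "level" via a diagonal enumeration ensuring that a single bad row kills finiteness. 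The correct construction interleaves so that $f(x)$ has a $1$ at stage $n$ iff $x \in W_{k(n), m(n)}$ for a pairing where \emph{every} $k$ recurs infinitely often; then $f(x)$ is finite iff for \emph{every} $k$, $x \notin V_k$ eventually fails — i.e. contradiction — so rather $f(x)$ cofinitely $0$ iff $\exists k\, x \notin V_k$... I will sort the exact pairing in the writeup.

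The main obstacle, as the above flailing indicates, is getting the combinatorial encoding exactly right so that "$x \in A$" corresponds precisely to "finite support" — the quantifier structure of $A = \bigcup_k \bigcap$ (from $F_k$ closed $= \bigcap_m U_{k,m}$) must be matched to the $\exists N \forall n \geq N$ shape of finiteness. The clean resolution: write $A = \bigcup_k F_k$ with $F_k$ clopen and \emph{increasing} (possible in a zero-dimensional space, replacing $F_k$ by $F_0 \cup \dots \cup F_k$), so $x \in A \iff x \in F_k$ for some $k$ $\iff$ $x \in F_k$ for all large $k$. Then define $f(x) \in \{0,1\}^\N$ by $f(x)(k) = 1 \iff x \notin F_k$. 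Each coordinate is the indicator of the clopen set $F_k^c$, hence continuous, so $f$ is continuous; and $f(x)$ has finite support $\iff$ $x \in F_k$ for all large $k$ $\iff$ $x \in A$. Thus $A = f\inv(\Pfin(\N))$, establishing $\SSig^0_2$-hardness, and combined with the first part, $\SSig^0_2$-completeness. I would present only this last clean version in the paper.
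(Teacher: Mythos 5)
Your first half is fine, modulo one slip: the set $\{x\in\{0,1\}^\N : x(n)=0 \text{ for all } n\geq N\}$ is closed but \emph{not} clopen (it constrains infinitely many coordinates, so it has empty interior); closedness is all you need, and the paper gets the $\SSig^0_2$ bound even more cheaply from the countability of $\Pfin(\N)$. The hardness half, however, has a genuine gap: your ``clean resolution'' rests on the claim that in a zero-dimensional Polish space one may take the closed sets $F_k$ in a decomposition $A=\bigcup_k F_k$ to be \emph{clopen}. This is false. Zero-dimensionality provides a basis of clopen sets, hence every \emph{open} set is a countable union of clopen sets, but a closed set need not be: a (non-isolated) singleton in $\{0,1\}^\N$ is closed with empty interior, so it is not a union of nonempty clopen sets, yet it is a perfectly good $\SSig^0_2$ set. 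Worse, if the $F_k$ really were clopen then $A=\bigcup_k F_k$ would be open, so your final construction could only ever reduce \emph{open} sets to $\Pfin(\N)$, which is nowhere near $\SSig^0_2$-hardness. With $F_k$ merely closed, the coordinate functions $\mathds 1_{F_k^c}$ are not continuous and your $f$ fails to be continuous, so the reduction collapses.

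The repair is essentially the route you started and then abandoned in the middle of your proposal. Apply zero-dimensionality to the \emph{open} complements: setting $A_n\coloneqq F_0\cup\dots\cup F_n$ (increasing), write $X\setminus A_n=\bigcup_m U_{n,m}$ with each $U_{n,m}$ clopen and, after disjointification ($U_{n,m}\mapsto U_{n,m}\setminus\bigcup_{j<m}U_{n,j}$), pairwise disjoint for each fixed $n$. Then $f(x)=(\mathds 1_{U_{n,m}}(x))_{(n,m)}$ is continuous into $\{0,1\}^{\N\times\N}$, each row $n$ contains at most one $1$, and $x\in A$ iff all rows $n\geq n_0$ vanish for some $n_0$ iff $f(x)$ has finite support; composing with a bijection $\N\times\N\to\N$ gives the reduction to $\Pfin(\N)$. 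This is exactly the paper's proof. The single step you were missing in your ``$A^c=\bigcap_k V_k$'' attempt was to make the clopen pieces \emph{disjoint} in $m$ (so that each row contributes at most one $1$) rather than increasing in $m$ (which forces any nonzero row to have infinite support, the defect you correctly observed before abandoning that line).
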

\begin{proof}
    Since $\Pfin(\N)$ is countable, it is $F_\sigma$, i.e.~$\SSig^0_2$.
    Since there is a bijection $\N\times\N\to\N$, we will show $\Pfin(\N)$ is $\SSig^0_2$-hard by  equivalently showing that the set $\Pfin(\N\times\N)$
    of finite subsets of $\N\times\N$ is a $\SSig^0_2$-hard subset of $\{0,1\}^{\N\times\N}$.
    
    So let $X$ be zero-dimensional Polish, suppose $A\subseteq X$ is 
    $\SSig^0_2$, and write $A=\bigcup_{n\in\N} A_n$ where
    each $A_n$ is closed. Replacing $A_n$
    by $\bigcup_{i\leq n} A_i$, we assume without loss of generality
    that $A_n\subseteq A_{n+1}$. 
    
    For every $n\in\N$, since $X$ is zero-dimensional, we can write
    \[
    X\setminus A_n=\bigcup_{m} U_{n,m}
    \]
    where each $U_{n,m}$ is a clopen subset of $X$.
    Furthermore, by replacing each $U_{n,m}$ by $U_{n,m}\setminus\bigcup_{j<m}U_{n,j}$, we assume without loss
    of generality that for a fixed $n\in\N$, the family $(U_{n,m})_m$
    consists of disjoint clopen subsets of $X$. 
    
    Now consider the map $f:X\to \{0,1\}^{\N\times\N}$, $x\mapsto (1_{U_{n,m}}(x))_{(n,m)\in\N\times\N}$, where $1_{U_{n,m}}$ denotes the characteristic function of $U_{n,m}$. Since each $U_{n,m}$ is clopen, the map $f$ is continuous. 
    
    In order to conclude the proof, we have to show that $A=f\inv(\Pfin(\N\times\N))$, i.e.~that for all $x\in X$, we have 
    $x\in A$ iff there are finitely many $(n,m)\in\N\times\N$ such that $x\in U_{n,m}$.
    By construction, for every $n\in\N$, there is at most one $m\in\N$ such that $x\in U_{n,m}$. 
    Now if $x\in A$, let $n_0\in\N$ such that $x\in A_{n_0}$, then for all $n\geq n_0$ we have $x\in A_n$ and hence $x\notin U_{n,m}$ for all $m\in\N$. We conclude that $x$ belongs to finitely many $U_{n,m}$. 
    Conversely if $x\not\in A$ then for every $n\in\N$ there is $m\in\N$ such that $x\in U_{n,m}$, and so there are infinitely many $(n,m)\in\N\times\N$ such that $x\in U_{n,m}$.
    This shows the desired equivalence, which finishes the proof.
\end{proof}

We now present a standard tool for climbing up the Borel hierarchy.

\begin{prop}\label{prop: build pi complete from sig complete}Let $Y$ be a Polish space, suppose $B\subseteq Y$ is $\SSig^0_n$-complete. Then $B^\N\subseteq Y^\N$ is $\PPi^0_{n+1}$-complete. 
\end{prop}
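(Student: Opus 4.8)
The plan is to show the two halves separately: first that $B^\N$ is $\PPi^0_{n+1}(Y^\N)$, and then that it is $\PPi^0_{n+1}$-hard.

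For membership in $\PPi^0_{n+1}$: write $B^\N = \bigcap_{k\in\N} \pi_k\inv(B)$, where $\pi_k\colon Y^\N\to Y$ is the $k$-th coordinate projection. Each $\pi_k$ is continuous, so $\pi_k\inv(B)$ is $\SSig^0_n(Y^\N)$ by stability under continuous reduction (here $A\le_c B$ via $\pi_k$). A countable intersection of $\SSig^0_n$ sets is by definition $\PPi^0_{n+1}$, so $B^\N\in\PPi^0_{n+1}(Y^\N)$. This half is routine.

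For hardness: let $X$ be a zero-dimensional Polish space and $A\in\PPi^0_{n+1}(X)$; we must build a continuous $g\colon X\to Y^\N$ with $A = g\inv(B^\N)$. Write $A = \bigcap_{k\in\N} A_k$ with each $A_k\in\SSig^0_n(X)$. Since $B$ is $\SSig^0_n$-hard and $X$ is zero-dimensional Polish, for each $k$ there is a continuous $f_k\colon X\to Y$ with $A_k = f_k\inv(B)$. Define $g(x) = (f_k(x))_{k\in\N}$. This map is continuous because each coordinate $f_k$ is, and for every $x\in X$ we have $g(x)\in B^\N$ iff $f_k(x)\in B$ for all $k$, i.e.\ iff $x\in A_k$ for all $k$, i.e.\ iff $x\in A$. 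Hence $A\le_c B^\N$, so $B^\N$ is $\PPi^0_{n+1}$-hard, and combined with the first part it is $\PPi^0_{n+1}$-complete.

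The only point requiring a little care — though it is not really an obstacle — is that applying $\SSig^0_n$-hardness of $B$ simultaneously to the countably many sets $A_k$ produces a \emph{sequence} of reductions $f_k$, and one must check that assembling them coordinatewise yields a single continuous map into the product $Y^\N$; this is immediate from the universal property of the product topology, since continuity into $Y^\N$ is equivalent to continuity of each coordinate. Everything else is a direct unwinding of the definitions of the Borel classes and of continuous reduction.
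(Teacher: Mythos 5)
Your proof is correct and follows essentially the same route as the paper: the membership half is identical, and your reduction $g(x)=(f_k(x))_{k}$ is exactly the paper's composite $\Psi\circ\Phi$ (diagonal map followed by the coordinatewise reductions), just written in one step. Nothing further to add.
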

\begin{proof}
First note that $B^\N=\bigcap_{k\in\N}\pi_k\inv(B)$ where $\pi_k$ is the projection on the $k$-th coordinate, so that $B^\N$ is in $\PPi^0_{n+1}(Y^\N)$. 

Now suppose $A\in\PPi^0_{n+1}(\N^\N)$. By definition there is a countable family $(A_k)$ of elements of $\SSig^0_n(\N^\N)$ such that $A=\bigcap_{k\in\N}A_k$. 
Consider the map 
$$ \left.\begin{array}{rl}\Phi:\N^\N & \to(\N^\N)^\N \\x &\mapsto(x,x,...)\end{array}\right..$$
Then $\Phi$ is continuous and $A=\Phi\inv(\prod_{k\in\N} A_k)$. We thus only need to find $\Psi: (\N^\N)^\N\to Y^\N$ continuous such that $\prod_{k\in\N} A_k=\Psi\inv(B^\N)$. 
But since $B$ is $\SSig^0_n$ complete we have for every $k\in\N$ a continuous map $f_k: \N^\N\to Y$ such that $A_k=f_k\inv(B)$. The map $\Psi\coloneqq (f_k)_{k\in\N}:(\N^\N)^\N\to Y^\N$  satisfies $\prod_{k\in\N} A_k=\Psi\inv(B^\N)$, so that 
$A=(\Psi\circ\Phi)\inv(B^\N)$ as wanted.
\end{proof}

Our final lemma follows directly from the previous proposition along with Lemma \ref{lemma:finite sets are sigma 0 2 complete}.

\begin{lem}\label{lem: sequence of finite sets is Pi 0 3 complete}
The set $\Pfin(\N)^\N$ is $\PPi^0_3$-complete in $(\{0,1\}^\N)^\N$.\qed
\end{lem}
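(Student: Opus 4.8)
The plan is to deduce Lemma \ref{lem: sequence of finite sets is Pi 0 3 complete} directly from Lemma \ref{lemma:finite sets are sigma 0 2 complete} and Proposition \ref{prop: build pi complete from sig complete}, which is exactly how the surrounding text signals the argument should go (the sentence preceding the statement says "Our final lemma follows directly from the previous proposition along with Lemma \ref{lemma:finite sets are sigma 0 2 complete}"). So the skeleton is: take $Y=\{0,1\}^\N$ and $B=\Pfin(\N)\subseteq Y$, which is $\SSig^0_2$-complete by Lemma \ref{lemma:finite sets are sigma 0 2 complete}; then apply Proposition \ref{prop: build pi complete from sig complete} with $n=2$ to conclude that $B^\N=\Pfin(\N)^\N$ is $\PPi^0_3$-complete in $Y^\N=(\{0,1\}^\N)^\N$.

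The only thing to be slightly careful about is matching the hypotheses of Proposition \ref{prop: build pi complete from sig complete}: it requires $Y$ to be a Polish space and $B\subseteq Y$ to be $\SSig^0_n$-complete, and here $Y=\{0,1\}^\N$ is a compact metrizable, hence Polish, space, so the hypothesis is met with $n=2$. One should also note that $\SSig^0_2$-completeness of $B$ as a subset of $\{0,1\}^\N$ is exactly the assumption provided by Lemma \ref{lemma:finite sets are sigma 0 2 complete}, with no reindexing needed. The conclusion of the proposition, $B^\N\in\PPi^0_3((\{0,1\}^\N)^\N)$ and $\PPi^0_3$-hard, is verbatim the statement of the lemma once we substitute $B=\Pfin(\N)$.

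There is essentially no obstacle here; the proof is a one-line citation of the two earlier results. If anything, the only minor point worth spelling out is that completeness is preserved when we regard $B$ as sitting inside the concrete space $\{0,1\}^\N$ rather than an abstract $Y$, but since the proposition is stated for an arbitrary Polish $Y$ this is automatic. Accordingly the proof I would write is simply:

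\begin{proof}
By Lemma \ref{lemma:finite sets are sigma 0 2 complete}, $\Pfin(\N)$ is a $\SSig^0_2$-complete subset of the Polish space $\{0,1\}^\N$. Applying Proposition \ref{prop: build pi complete from sig complete} with $Y=\{0,1\}^\N$, $B=\Pfin(\N)$ and $n=2$, we conclude that $\Pfin(\N)^\N$ is $\PPi^0_3$-complete in $(\{0,1\}^\N)^\N$.
\end{proof}

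(Indeed the excerpt already supplies this proof as a bare \qed, so the remark above is just an expansion of that one-liner.)
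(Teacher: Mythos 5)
Your proof is correct and is exactly the argument the paper intends: the lemma is stated with a bare \qed precisely because it follows by applying Proposition \ref{prop: build pi complete from sig complete} with $n=2$ to the $\SSig^0_2$-complete set $\Pfin(\N)\subseteq\{0,1\}^\N$ from Lemma \ref{lemma:finite sets are sigma 0 2 complete}. Nothing further is needed.
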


In other words, the previous lemma shows that the set of sequences of finite subsets of $\N$ is $\PPi^0_3$-complete. This will be crucial for us in the sequel.

\subsection{Topologies on \texorpdfstring{$\BH$}{B(H)}}

Let $\mathcal H$ be a Hilbert space. We will use the following three topologies on the algebra $\BH$ of bounded operators on $\mathcal H$, which we describe in terms of nets: 
\begin{itemize}
    \item $x_i\to x$ \textbf{weakly} if for every $\xi,\eta\in\mathcal H$ we have 
    $\la x_i\xi,\eta\ra\to \la x\xi,\eta\ra$.
    \item $x_i\to x$ \textbf{strongly} if for every $\xi\in\mathcal H$ we have $\norm{x_i\xi- x\xi}\to 0$.
    \item $x_i\to x$ \textbf{$*$-strongly} if for every $\xi\in\mathcal H$ we have both $\norm{x_i\xi- x\xi}\to 0$ and 
    $\norm{x_i^*\xi-x^*\xi}\to 0$.
\end{itemize}
Since $\BH$ is the dual of the Banach space of trace class operators 
(see for instance \cite[Thm.~3.4.13]{pedersenAnalysisNow1989}), we can also endow it with the associated weak-$*$ topology, which is called the \textbf{ultraweak topology} (or $\sigma$-weak topology). The latter refines the weak topology and coincides with it on the closed unit ball $\BH_1$ (see \cite[Prop.~4.6.14]{pedersenAnalysisNow1989}). In particular $\BH_1$ is weakly/ultraweakly compact by the Banach-Alaoglu theorem and Polish
as soon as $\mathcal H$ is separable.

We also need the following basic separate continuity result for the weak topology towards proving Maréchal's theorem. 
\begin{lem}\label{lem: separate continuity mult weak}
Let $x\in\BH$. The maps $y\mapsto yx$ and $y\mapsto xy$ are continuous for the weak topology. 
\end{lem}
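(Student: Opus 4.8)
The plan is to fix $x \in \BH$ and prove separate continuity of left and right multiplication for the weak topology by unwinding the definition of weak convergence in terms of matrix coefficients. Let $(y_i)$ be a net converging weakly to $y$ in $\BH$; I must show $y_i x \to yx$ weakly and $x y_i \to xy$ weakly.

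For the map $y \mapsto yx$: given $\xi, \eta \in \H$, we have $\la y_i x \xi, \eta\ra = \la y_i (x\xi), \eta\ra$. Setting $\xi' = x\xi \in \H$, weak convergence of $(y_i)$ applied to the pair $(\xi', \eta)$ gives $\la y_i \xi', \eta\ra \to \la y \xi', \eta\ra = \la yx\xi, \eta\ra$. Since $\xi, \eta$ were arbitrary, $y_i x \to yx$ weakly.

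For the map $y \mapsto xy$: given $\xi, \eta \in \H$, we have $\la x y_i \xi, \eta\ra = \la y_i \xi, x^* \eta\ra$. Setting $\eta' = x^*\eta \in \H$, weak convergence of $(y_i)$ applied to the pair $(\xi, \eta')$ gives $\la y_i \xi, \eta'\ra \to \la y \xi, \eta'\ra = \la xy\xi, \eta\ra$. Again, since $\xi, \eta$ were arbitrary, $x y_i \to xy$ weakly. This completes the proof.

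There is essentially no obstacle here: the statement is a direct consequence of the fact that weak convergence is tested against all pairs of vectors, combined with the adjoint relation $\la x\zeta, \eta\ra = \la \zeta, x^*\eta\ra$. The only thing to be careful about is using the correct slot of the bilinear (or sesquilinear) form $\la\cdot,\cdot\ra$ when moving $x$ across, i.e.\ that $x$ passes to the second variable as $x^*$; everything else is immediate. No boundedness hypothesis on the net is needed for this particular lemma, since $x$ itself is a fixed bounded operator.
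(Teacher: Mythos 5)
Your proof is correct and follows essentially the same route as the paper: the paper dispatches $y\mapsto yx$ as ``immediate'' (your computation $\la y_i x\xi,\eta\ra=\la y_i(x\xi),\eta\ra$) and handles $y\mapsto xy$ by reducing to the first case via the adjoint, which is exactly the content of your identity $\la xy_i\xi,\eta\ra=\la y_i\xi,x^*\eta\ra$. Nothing is missing.
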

\begin{proof}
    The continuity of $y\mapsto yx$ for the weak topology is immediate to check. 
    For $y\mapsto xy$, one reduces it to the previous case using the adjoint map, which is a homeomorphism for the weak topology and satisfies $(xy)^*=y^*x^*$.
\end{proof}
We arrive at the following important observation, already present in Maréchal's paper.
\begin{prop}\label{prop: mult is weakly baire class 1}
	Let $\mathcal{H}$ be separable. The multiplication map $(x,y)\mapsto xy$ is a Baire class 1 map $(\BH_1,\weaktopo)\times(\BH_1,\weaktopo)\to(\BH_1,\weaktopo)$.
\end{prop}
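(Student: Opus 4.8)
The plan is to factor the multiplication map through a map to a single copy of $\BH_1$ and combine the separate weak-continuity from Lemma~\ref{lem: separate continuity mult weak} with a diagonal-sequence argument. Concretely, since $\mathcal H$ is separable, the closed unit ball $(\BH)_1$ is a compact metrizable (in particular second-countable Polish) space for the weak topology, so the product $(\BH_1,\weaktopo)\times(\BH_1,\weaktopo)$ is also a compact metrizable space, and a map out of it is Baire class~1 if and only if it is the pointwise limit of a sequence of continuous maps. So it suffices to exhibit continuous maps $m_n\colon (\BH_1)^2\to(\BH_1)^{\,}$ (weak topologies throughout) with $m_n(x,y)\to xy$ weakly for every $(x,y)$.

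The natural candidates are the finite-rank truncations. Fix an orthonormal basis $(\xi_k)_{k\in\N}$ of $\mathcal H$ and let $p_n$ be the orthogonal projection onto $\Span\{\xi_1,\dots,\xi_n\}$. Set $m_n(x,y)\coloneqq x p_n y$. First I would check that each $m_n$ is weakly continuous on $(\BH_1)^2$: writing $x p_n y = \sum_{k\le n} (x\xi_k)\otimes(\overline{\,y^*\xi_k\,})$ (a rank-$\le n$ operator depending bilinearly on the vectors $x\xi_k$ and $y^*\xi_k$), one sees that $\la x p_n y\,\eta,\zeta\ra = \sum_{k\le n}\la y\eta,\xi_k\ra\,\la x\xi_k,\zeta\ra$, and each summand is a product of a weakly continuous function of $y$ and a weakly continuous function of $x$, hence $m_n$ is separately weakly continuous and, being a finite sum of such products and with the operators bounded by $1$, is in fact jointly weakly continuous on the compact metrizable ball (here one can invoke the separate-continuity lemma together with the fact that on $(\BH_1,\weaktopo)$, which is metrizable, a separately continuous bilinear-type expression with values controlled in norm is jointly continuous — or simply argue directly with nets as in Lemma~\ref{lem: separate continuity mult weak}). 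Then I would verify pointwise convergence: for fixed $x,y\in(\BH)_1$ and fixed $\eta,\zeta\in\mathcal H$, $\abs{\la (x p_n y - xy)\eta,\zeta\ra} = \abs{\la x(p_n-1)y\eta,\zeta\ra}\le \norm{(p_n-1)y\eta}\,\norm{\zeta}\to 0$ since $p_n\to 1$ strongly. This gives $m_n(x,y)\to xy$ weakly for every $(x,y)$, so the multiplication map is a pointwise limit of weakly continuous maps on a metrizable space, hence Baire class~1.

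The one genuine subtlety — and the step I expect to be the main obstacle — is the joint weak continuity of $m_n$. Separate weak continuity is immediate from Lemma~\ref{lem: separate continuity mult weak}, but joint continuity of a bilinear map in two weak topologies is false in general (this is exactly why the full multiplication map is only Baire class~1, not continuous). What saves the truncations is that $p_n$ has finite rank: $m_n$ factors as $(x,y)\mapsto (x p_n, p_n y)\mapsto (x p_n)(p_n y)$, where on the norm-bounded ball the map $y\mapsto p_n y$ lands in the finite-dimensional space $p_n\BH p_n \oplus \dots$ — more precisely $x\mapsto xp_n$ and $y\mapsto p_ny$ are weakly continuous with ranges in sets on which the weak and norm topologies, restricted appropriately, interact well enough that multiplication becomes jointly continuous. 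The cleanest way to nail this down is the direct net computation: if $x_i\to x$ and $y_i\to y$ weakly in $(\BH)_1$, then $\la x_i p_n y_i\,\eta,\zeta\ra = \sum_{k\le n}\la y_i\eta,\xi_k\ra\la x_i\xi_k,\zeta\ra \to \sum_{k\le n}\la y\eta,\xi_k\ra\la x\xi_k,\zeta\ra = \la x p_n y\,\eta,\zeta\ra$, using that each of the finitely many scalar nets $\la y_i\eta,\xi_k\ra$ and $\la x_i\xi_k,\zeta\ra$ converges and is bounded. With this in hand the proof is complete; no appeal to a Hilbert–Schmidt or pre-Hilbert structure on $\BH$ is needed.
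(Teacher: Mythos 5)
Your proof is correct, but it takes a genuinely different route from the paper. The paper simply combines Lemma \ref{lem: separate continuity mult weak} (separate weak continuity of multiplication) with the fact that $(\BH_1,\weaktopo)$ is compact Polish, and then invokes a theorem of Kuratowski asserting that separately continuous maps on products of metrizable spaces are Baire class 1 --- a two-line soft argument that would apply verbatim to any separately continuous map, but which rests on an external citation. You instead exhibit multiplication explicitly as a pointwise limit of the jointly weakly continuous truncations $m_n(x,y)=xp_ny$; the key computation $\la xp_ny\,\eta,\zeta\ra=\sum_{k\leq n}\la y\eta,\xi_k\ra\la x\xi_k,\zeta\ra$ correctly reduces joint continuity to continuity of finitely many scalar-valued factors, and the pointwise convergence via $p_n\to 1$ strongly is right. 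This is self-contained and more concrete (it even yields the stronger conclusion that multiplication is a pointwise limit of jointly continuous maps), at the cost of using the Hilbert-space structure through the choice of an orthonormal basis, which the paper's argument avoids. One small caution: you state that a map out of a compact metrizable space is Baire class 1 \emph{if and only if} it is a pointwise limit of continuous maps; for maps into a general metrizable target only the ``if'' direction is automatic, but that is the only direction you use, so nothing breaks.
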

\begin{proof}
	Since $\BH_1$ is compact Polish for the weak topology, and since the above lemma precisely says that multiplication is separately continuous on $\BH_1$ for the weak topology, the proposition follows from 
	\cite[§31.V, Thm.~2]{kuratowskiTopologyVolume1966}.
\end{proof}

We require one last lemma on $\BH$ for our proof of the Haagerup-Winsløw  selection theorem. This lemma is stated as Lemma 2.4 in \cite{haagerupEffrosMarechalTopologySpace1998}. It can be seen as a generalization of the well-known fact that on the unitary group, the weak and the strong-$*$ topology are the same, and the proof is indeed very similar. We provide the proof for the reader's convenience.

\begin{lem}\label{lem: unitary have same weak and strong nbhd on B1}
	Let $u\in\B(\h)$ be an isometry, then for every strong neighborhood $U$ of $u$, there is a weak neighborhood $V$ of $u$ such that $V\cap \BH_1\subseteq U\cap \BH_1$. If $u$ is moreover unitary, then the same conclusion holds for any strong* neighborhood $U$ of $u$.
\end{lem}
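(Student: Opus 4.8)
The plan is to prove the isometry case first and then bootstrap to the unitary case by applying the isometry statement to both $u$ and $u^*$. For the isometry case, fix an isometry $u$ and a basic strong neighborhood of $u$, which we may take of the form $U = \{x \in \B(\h) : \norm{(x-u)\xi_k} < \eps \text{ for } k = 1, \dots, n\}$ for some unit vectors $\xi_1, \dots, \xi_n$ and some $\eps > 0$. The key computational identity, valid for any $x$ in the unit ball $\BH_1$, is
\[
\norm{(x-u)\xi}^2 = \norm{x\xi}^2 + \norm{u\xi}^2 - 2\Real\la x\xi, u\xi\ra \leq 2\norm\xi^2 - 2\Real\la x\xi, u\xi\ra,
\]
where I used $\norm{u\xi} = \norm\xi$ (here is where isometry enters) and $\norm{x\xi} \leq \norm\xi$ (here is where the unit ball enters). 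So if $V$ is the weak neighborhood of $u$ defined by $V = \{x : \abs{\la x\xi_k, u\xi_k\ra - \la u\xi_k, u\xi_k\ra} < \eps^2/2 \text{ for } k=1,\dots,n\}$, then for $x \in V \cap \BH_1$ we get $\norm{(x-u)\xi_k}^2 \leq 2\norm{\xi_k}^2 - 2\Real\la x\xi_k, u\xi_k\ra < 2\norm{\xi_k}^2 - 2\norm{u\xi_k}^2 + \eps^2 = \eps^2$, which shows $x \in U$, as desired.

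For the unitary case, let $u$ be unitary and let $U$ be a basic strong-$*$ neighborhood of $u$, which we may take of the form $U = \{x : \norm{(x-u)\xi_k} < \eps \text{ and } \norm{(x^*-u^*)\xi_k} < \eps \text{ for } k=1,\dots,n\}$. Since $u$ is an isometry, the isometry case gives a weak neighborhood $V_1$ of $u$ with $V_1 \cap \BH_1 \subseteq \{x : \norm{(x-u)\xi_k} < \eps\} \cap \BH_1$. Since $u^*$ is also an isometry (being unitary), the isometry case applied to $u^*$ gives a weak neighborhood $V_2'$ of $u^*$ with $V_2' \cap \BH_1 \subseteq \{y : \norm{(y-u^*)\xi_k} < \eps\} \cap \BH_1$; pulling back through the adjoint map, which is a weak homeomorphism and maps $\BH_1$ onto itself, yields a weak neighborhood $V_2$ of $u$ with $V_2 \cap \BH_1 \subseteq \{x : \norm{(x^*-u^*)\xi_k} < \eps\} \cap \BH_1$. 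Then $V := V_1 \cap V_2$ works.

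I do not expect any serious obstacle here; the only point requiring a little care is making sure one reduces to \emph{basic} neighborhoods correctly (strong and strong-$*$ neighborhoods are generated by seminorms $x \mapsto \norm{x\xi}$, resp.\ $x \mapsto \norm{x\xi} + \norm{x^*\xi}$, so every neighborhood contains one of the above form), and noting explicitly where each hypothesis is used — isometry of $u$ for $\norm{u\xi} = \norm\xi$, and membership in the unit ball for the inequality $\norm{x\xi} \leq \norm\xi$. The adjoint being a weak homeomorphism was already recorded in Lemma~\ref{lem: separate continuity mult weak}'s proof, so the transfer in the unitary case is immediate.
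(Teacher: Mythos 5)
Your proof is correct and follows essentially the same route as the paper: expand $\norm{(x-u)\xi}^2$, use $\norm{u\xi}=\norm\xi$ (isometry) and $\norm{x\xi}\leq\norm\xi$ (unit ball) to bound it by a weakly controlled quantity, then handle the strong-$*$ case by applying the isometry statement to $u^*$. The only cosmetic difference is that the paper transfers the condition on $x^*$ back to a weak neighborhood of $u$ by the explicit identity $\Real\la (u^*-x^*)\xi,u^*\xi\ra=\Real\la (u-x)u^*\xi,\xi\ra$, whereas you invoke the adjoint map being a weak homeomorphism preserving $\BH_1$; both are fine.
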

\begin{proof}
	It suffices to prove this when $U$ is a subbasic neighborhood of $u$, so that we can then take $U$ to be of the form $U=\{x: \norm{(x-u)\xi}^2<\epsilon\}$ for the strong topology, while for the strong-$*$ topology it could also be of the form  $U=\{x:\norm{(x^*-u^*)\xi}^2<\epsilon\}$,
	where $\xi$ is a vector of norm $1$ and $\epsilon>0$.
	Let us start by the case $U=\{x: \norm{(x-u)\xi}^2<\epsilon\}$ and $u$ is an isometry. Note that for any $x\in\BH_1$, we have 
	\begin{align*}
		\norm{(x-u)\xi}^2&=\norm{u\xi}^2+\norm{x\xi}^2-2 \Real\la x\xi,u\xi\ra\\
		&\leq 2-2 \Real\la x\xi,u\xi\ra\\
		&=2 \Real\la (u-x)\xi,u\xi\ra,\text{ since }u\text{ is isometric.}
	\end{align*}
	So if we let $V=\{x\in\BH\colon \abs{\la (x-u)\xi,u\xi\ra}<\epsilon/2 \}$, then the previous calculation yields $V\cap \mathcal B(\mathcal H)_1\subseteq U\cap \mathcal B(\mathcal H)_1$ as wanted.
	
	For the strong-$*$ topology, suppose moreover that $u$ is unitary. Then we need to deal with the additional case where
	\[
	U=\{x:\norm{(x^*-u^*)\xi}^2<\epsilon\}.
	\]
	However, the above calculation applied to the isometry $u^*$  yields 
	\[
	\norm{(x^*-u^*)\xi}^2\leq 2 \Real\la (u^*-x^*)\xi,u^*\xi\ra=2 \Real \la (u-x)u^*\xi,\xi\ra.
	\]
	So if we let this time $V=\{x\in\BH\colon \abs{\la (u-x)u^*\xi,\xi\ra}<\epsilon/2 \}$, then $V\cap \mathcal B(\mathcal H)_1\subseteq U\cap \mathcal B(\mathcal H)_1$ as wanted.
\end{proof}

\section{Topologies on the space of closed subsets}\label{sec: topologies}

Given a topological space $X$, we denote by $\mathcal F(X)$
the space of all closed subsets of $X$. 
Several relevant topologies can be put on $\mathcal F(X)$, see e.g.~\cite{beerTopologiesClosedClosed1993}, and 
our work requires three of them. 
We start with the lower topology, which will be
refined by the two other topologies we describe, namely the Wijsman topology
and the Vietoris topology. 

\subsection{The lower topology}\label{sec: lower topo}

Given a topological space $X$, the \textbf{lower topology} (also called lower Vietoris topology or lower semi-finite topology) on 
$\mathcal F(X)$ is obtained by declaring the following sets to be open: 
\[
\mathcal I_U\coloneqq \{F\in\mathcal F(X)\colon F\cap U\neq \emptyset\},
\]
where $U$ is an open subset of $X$.
Sets of the form $\mathcal I_U$ where $U$ is open thus form a subbasis for the lower topology, and we denote the later by $\lowertopo$. 
Moreover, if $(U_i)_{i\in I}$ is a basis of the topology of $X$, it is straightforward to 
show that $(\mathcal V_{U_i})_{i\in I}$ is a subbasis for the lower topology on $\mathcal F(X)$. 
In particular, if $X$ is second-countable, then so is the lower topology on $\mathcal F(X)$.

Also observe that if $X$ and $Y$ are topological spaces, the map 
\[
\begin{array}{ccc}
    \mathcal F(X)\times\mathcal F(Y)&\to& \mathcal F(X\times Y)\\
     (A,B)&\mapsto&A\times B 
\end{array}
\]
is an embedding of topological spaces for the lower topology, allowing us to  view 
$\mathcal F(X)\times\mathcal F(Y)$ as a topological subspace of $\mathcal F(X\times Y)$.

The lower topology satisfies the following important continuity property.

\begin{lem} \label{lem: induced continuity for lower}
    Let $X$ and $Y$ be topological spaces, let $f:X\to Y$ be continuous. Then the map $f_*:\mathcal F(X)\to\mathcal F(Y)$ defined by $f_*(F)=\overline{f(F)}$ is continuous, 
    if we endow $\mathcal F(X)$ and $\mathcal F(Y)$ with their respective lower topologies. 
\end{lem}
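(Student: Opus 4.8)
The plan is to verify continuity of $f_*$ directly on a subbasis of the target. Recall that the lower topology $\lowertopo$ on $\mathcal F(Y)$ is generated by the sets $\mathcal I_V=\{G\in\mathcal F(Y)\colon G\cap V\neq\emptyset\}$ for $V\subseteq Y$ open. Since preimages commute with unions and finite intersections, it suffices to show that $f_*\inv(\mathcal I_V)$ is open in $\mathcal F(X)$ for each open $V\subseteq Y$.

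First I would unwind the definition: for $F\in\mathcal F(X)$ we have $f_*(F)\in\mathcal I_V$ if and only if $\overline{f(F)}\cap V\neq\emptyset$. The key (elementary) observation is that for an open set $V$ and an arbitrary subset $A$ of $Y$, one has $\overline A\cap V\neq\emptyset$ if and only if $A\cap V\neq\emptyset$: the "if" direction is clear since $A\subseteq\overline A$, and the "only if" direction follows because any point of $\overline A$ lying in the neighborhood $V$ forces $V$ to meet $A$. Applying this with $A=f(F)$ gives that $f_*(F)\in\mathcal I_V$ if and only if $f(F)\cap V\neq\emptyset$, which in turn is equivalent to $F\cap f\inv(V)\neq\emptyset$.

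Finally, since $f$ is continuous, $f\inv(V)$ is open in $X$, and the previous line says exactly that
\[
f_*\inv(\mathcal I_V)=\mathcal I_{f\inv(V)},
\]
which is a subbasic open set of $(\mathcal F(X),\lowertopo)$. This proves that $f_*$ is continuous. I do not expect any real obstacle here; the only point requiring a moment's care is the closure-versus-open-set identity above, and it is purely formal. (Note that the closure in the definition of $f_*$ is genuinely needed for $f_*(F)$ to land in $\mathcal F(Y)$, but it is invisible to the lower topology precisely because of that identity.)
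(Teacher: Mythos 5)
Your proof is correct and follows essentially the same route as the paper: both reduce to subbasic sets and establish the identity $f_*\inv(\mathcal I_V)=\mathcal I_{f\inv(V)}$ via the observation that an open set meets a closure iff it meets the set itself. No issues.
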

\begin{proof}
    Let $U$ be an open subset of $Y$. Then by definition $f_*\inv(\mathcal I_U)$ is the set of all $F\in\mathcal F(X)$ such that $\overline{f(F)}\cap U\neq\emptyset$, which is equivalent to $f(F)\cap U\neq \emptyset$. The latter condition is finally equivalent to $F\cap f\inv(U)\neq\emptyset$, which means that $f_*\inv(\mathcal I_U)=\mathcal I_{f\inv(U)}$. Since $f$ is continuous, this shows that the $f_*$-preimage of any subbasic open set of the form $\mathcal I_U$ is open, so $f_*$ is continuous as wanted.
\end{proof}

Note that the map $X\rightarrow \mathcal{F}(X)$, $x\mapsto\{x\}$ is continuous for the lower topology. Also, the intersection map $(F_1,F_2)\mapsto F_1\cap F_2$ is not continuous for the lower topology in general, for instance if $x_n\to x$ with $x_n\neq x$ then 
$\{x_n\}\cap\{x\}=\emptyset\not\to \{x\}$ for the lower topology.
However, the intersection map satisfies the following weaker continuity property which will be useful in establishing the Michael's Selection Theorem (Theorem \ref{thm: michael selection}).

\begin{lem}\label{lem: continuity of intersection with fattenings}
	Let $W\subseteq X\times X$ be open, where $X$ is any topological space and define $W_x\coloneqq\{y\in X\,:\,(x,y)\in W\}$. The
	map 
	$$
	X\times\mathcal F(X)\rightarrow \mathcal F(X),\quad
	(x,F)\mapsto \overline{W_x\cap F}
	$$
	is continuous for the lower topology.
\end{lem}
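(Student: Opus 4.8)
The goal is to show that $(x,F)\mapsto\overline{W_x\cap F}$ is continuous into $(\mathcal F(X),\lowertopo)$, and since the sets $\mathcal I_U$ with $U\subseteq X$ open form a subbasis for $\lowertopo$, it suffices to show that the preimage of each $\mathcal I_U$ is open in $X\times\mathcal F(X)$. So fix an open set $U\subseteq X$. Just as in the proof of Lemma~\ref{lem: induced continuity for lower}, the condition $\overline{W_x\cap F}\cap U\neq\emptyset$ is equivalent to $W_x\cap F\cap U\neq\emptyset$ (the closure of a set meets an open set iff the set itself does). Thus I must show that
\[
\Omega\coloneqq\{(x,F)\in X\times\mathcal F(X)\,:\,W_x\cap U\cap F\neq\emptyset\}
\]
is open.

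**The key step.** Take $(x_0,F_0)\in\Omega$, so there is a point $y_0\in W_{x_0}\cap U\cap F_0$. Since $(x_0,y_0)\in W$ and $W$ is open in $X\times X$, there are open sets $A\ni x_0$ and $B\ni y_0$ with $A\times B\subseteq W$. Shrinking, we may take $B\subseteq U$ (replace $B$ by $B\cap U$, which still contains $y_0$ since $y_0\in U$). Now consider the neighborhood $A\times\mathcal I_B$ of $(x_0,F_0)$ in $X\times\mathcal F(X)$: it contains $(x_0,F_0)$ because $x_0\in A$ and $y_0\in B\cap F_0$ so $F_0\in\mathcal I_B$. I claim $A\times\mathcal I_B\subseteq\Omega$. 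Indeed, if $x\in A$ and $F\in\mathcal I_B$, pick $y\in B\cap F$; then $(x,y)\in A\times B\subseteq W$, so $y\in W_x$, and $y\in B\subseteq U$, so $y\in W_x\cap U\cap F$, proving $(x,F)\in\Omega$. Hence $\Omega$ is open, and the map is continuous for the lower topology.

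**Where the difficulty lies.** There is essentially no hard part here: the statement is really just an exercise in unwinding the definitions of the lower topology and of $W_x$, combined with the openness of $W$. The only mild subtlety — the same one that appears in Lemma~\ref{lem: induced continuity for lower} — is the reduction ``$\overline{S}\cap U\neq\emptyset\iff S\cap U\neq\emptyset$'' for $U$ open, which lets us drop the closure and work directly with $W_x\cap F\cap U$; this is what makes the problem tractable without any separation or metrizability hypotheses on $X$. One should also note explicitly that it suffices to check the preimages of subbasic opens $\mathcal I_U$, which is immediate since preimages commute with arbitrary unions and finite intersections. Everything else is the direct product-of-opens argument above.
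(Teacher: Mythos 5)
Your proof is correct and follows essentially the same argument as the paper: drop the closure since $U$ is open, pick a witness $y_0\in W_{x_0}\cap F_0\cap U$, use openness of $W$ to get a product neighborhood $A\times B\subseteq W$ with $B\subseteq U$, and take $A\times\mathcal I_B$ as the neighborhood (the paper's $V_1\times\mathcal I_{V_2}$). No issues.
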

\begin{proof}
	Let $(x_0,F_0)\in X\times \mathcal F(X)$ and suppose that $U$ is an open subset of $X$ such
	that $\overline{W_{x_0}\cap F_0}\cap U\neq\emptyset$. we need to find an open neighborhood $O$
	of $(x_0,F_0)$ such that we still have $\overline{W_{x}\cap F}\cap U\neq\emptyset$ for all $(x,F)\in O$.
	
	We have that $W_{x_0}\cap F_0\cap U$ is not empty, so we find $y_0\in U\cap F_0$ such that $(x_0,y_0)\in W$. 
	Since $W$ is open, we find open neighborhoods $V_1$ of $x_0$ and $V_2\subseteq U$ of $y_0$ such that $V_1\times V_2\subseteq W$.
	It follows that for any $x\in V_1$, we have $V_2\subseteq W_{x}$, so for any $F$ such that $F\cap V_2\neq \emptyset$,
	we have $F\cap W_x\cap U\neq \emptyset$ since $V_2$ is contained in $U$. It follows that \[
	O= V_1\times\{F\in\mathcal F(X)\colon F\cap V_2\neq \emptyset \}
	\]
	is the desired neighborhood of $(x_0,F_0)$.
\end{proof}

\subsection{The Vietoris topology}\label{sec: vietoris topo}

Let $X$ be a compact Hausdorff space. Then $\mathcal F(X)$ is equal to the space of compact subsets of $X$, and we endow it with the \textbf{Vietoris topology} which is defined as the join of the lower topology 
and the \emph{upper Vietoris topology}. The latter is the topology whose basis is given by sets of the form 
$$\mathcal C_U\coloneqq\{ F\in\mathcal F(X)\colon F\subseteq U\},$$ 
where $U$ is any open subset of $X$. We denote by $\uppervietoristopo$ the upper Vietoris topology, and by $\vietoristopo$ the Vietoris topology. Note that the empty set is then isolated for the upper Vietoris topology, via the open set $\mathcal C_{\emptyset}$. Also note that if $X'\subseteq X$ is closed, then the topology induced by the Vietoris topology of $\mathcal F(X)$ on $\mathcal F(X')$ is equal to the Vietoris topology of $\mathcal F(X')$.

The following fundamental theorem is well-known, see e.g.~\cite[4.9.6]{michaelTopologiesSpacesSubsets1951} where the Vietoris topology is called the \emph{finite topology}. 

\begin{thm}[Vietoris]\label{thm: Vietoris is compact Hausdorff}
    Let $X$ be a compact Hausdorff space. Then the Vietoris topology on $\mathcal F(X)$ is compact and  Hausdorff.
\end{thm}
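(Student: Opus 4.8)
The plan is to treat the two assertions separately, as they rely on quite different ideas. For the Hausdorff property, I would take two distinct closed subsets $F_1 \neq F_2$ of $X$ and, without loss of generality, fix a point $x_0 \in F_1 \setminus F_2$. Since $X$ is compact Hausdorff, hence normal, and $F_2$ is closed with $x_0 \notin F_2$, I can find disjoint open sets $U \ni x_0$ and $V \supseteq F_2$. Then $\mathcal I_U = \{F : F \cap U \neq \emptyset\}$ is an open neighborhood of $F_1$ in the lower topology, and $\mathcal C_V = \{F : F \subseteq V\}$ is an open neighborhood of $F_2$ in the upper Vietoris topology; since $U \cap V = \emptyset$, these two sets are disjoint, so the Vietoris topology is Hausdorff. (One small point: the empty set, when it occurs, is isolated via $\mathcal C_\emptyset$, so it is separated from everything.)

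For compactness, the cleanest route I would take is to exhibit $(\mathcal F(X), \vietoristopo)$ as a closed subspace of a compact space, or else to use the Alexander subbase lemma directly on the subbasis consisting of the sets $\mathcal I_U$ and $\mathcal C_U$ for $U$ open in $X$. I lean toward the Alexander subbase approach since it avoids introducing auxiliary constructions. So suppose $\mathcal F(X)$ is covered by a family of subbasic open sets; split this family into those of the form $\mathcal I_{U_i}$ ($i \in I$) and those of the form $\mathcal C_{V_j}$ ($j \in J$). Let $W = \bigcup_{j \in J} V_j$. The key dichotomy is: either $W = X$, or $W \neq X$. If $W = X$, then by compactness of $X$ finitely many $V_{j_1}, \dots, V_{j_n}$ already cover $X$, and I claim $\mathcal C_{V_{j_1}}, \dots, \mathcal C_{V_{j_n}}$ cover $\mathcal F(X)$ --- but this is false in general (e.g.\ a closed set meeting two of the pieces without being inside any one); so the argument must be more careful. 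The correct move: if $W = X$ fails, pick $x_0 \in X \setminus W$; then $\{x_0\} \in \mathcal F(X)$ is not in any $\mathcal C_{V_j}$, so it must lie in some $\mathcal I_{U_{i_0}}$, i.e.\ $x_0 \in U_{i_0}$. More generally, for the subbase argument one shows: if no finite subfamily of the $\mathcal C_{V_j}$'s together with the $\mathcal I_{U_i}$'s covers $\mathcal F(X)$, one constructs a closed set avoiding all of them. The cleanest formulation: consider $F_0 := X \setminus \bigcup_{i \in I} U_i$. This $F_0$ is closed and lies in no $\mathcal I_{U_i}$, so it must lie in some $\mathcal C_{V_{j_0}}$, meaning $F_0 \subseteq V_{j_0}$; hence $X = V_{j_0} \cup \bigcup_{i \in I} U_i$, and by compactness of $X$ there are finitely many indices with $X = V_{j_0} \cup U_{i_1} \cup \dots \cup U_{i_n}$. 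Then I claim $\{\mathcal C_{V_{j_0}}, \mathcal I_{U_{i_1}}, \dots, \mathcal I_{U_{i_n}}\}$ covers $\mathcal F(X)$: given any closed $F$, if $F \not\subseteq V_{j_0}$ then $F$ meets $X \setminus V_{j_0} \subseteq U_{i_1} \cup \dots \cup U_{i_n}$, so $F \in \mathcal I_{U_{i_k}}$ for some $k$; otherwise $F \in \mathcal C_{V_{j_0}}$. This finite subcover completes the Alexander subbase argument.

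The main obstacle, as the sketch above already reveals, is getting the bookkeeping of the Alexander subbase lemma exactly right: the naive attempt to separately handle the $\mathcal I_U$-part and the $\mathcal C_V$-part fails, and one has to realize that the $\mathcal C_{V_j}$'s are essentially useless unless their union leaves out at most what the $U_i$'s cover --- the trick is to feed the set $X \setminus \bigcup_i U_i$ into the cover to extract a single relevant $V_{j_0}$, and only then invoke compactness of $X$. I would also double-check the edge cases ($\emptyset \in \mathcal F(X)$, and the family of $\mathcal I_{U_i}$'s being empty, in which case $\bigcup_i U_i = \emptyset$ and $F_0 = X$). Alternatively, if the subbase argument feels delicate to write cleanly, a fallback is to embed $\mathcal F(X) \hookrightarrow \{0,1\}^{\mathcal O(X)}$ or into a hyperspace built from a compactification and show the image is closed; but for a self-contained exposition the Alexander subbase proof is the most economical, so I would go with that, citing \cite[4.9.6]{michaelTopologiesSpacesSubsets1951} for readers who want the classical reference.
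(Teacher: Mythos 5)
The paper does not actually prove this theorem: it is stated as a known result with a citation to Michael's 1951 paper, so there is no in-house argument to compare against. Your proposal supplies a correct, self-contained proof. The Hausdorff part is fine: separating $x_0\in F_1\setminus F_2$ from the closed set $F_2$ by disjoint opens $U\ni x_0$ and $V\supseteq F_2$ (regularity of a compact Hausdorff space) gives the disjoint neighborhoods $\mathcal I_U\ni F_1$ and $\mathcal C_V\ni F_2$, and your remark about $\mathcal C_\emptyset$ handles the empty set. The compactness part via the Alexander subbase lemma is also correct, and you identify the right pivot: given a subbasic cover by sets $\mathcal I_{U_i}$ and $\mathcal C_{V_j}$, the closed set $F_0=X\setminus\bigcup_i U_i$ lies in no $\mathcal I_{U_i}$, hence in some $\mathcal C_{V_{j_0}}$, which forces $X=V_{j_0}\cup\bigcup_i U_i$; compactness of $X$ then yields the finite subcover $\{\mathcal C_{V_{j_0}},\mathcal I_{U_{i_1}},\dots,\mathcal I_{U_{i_n}}\}$, and the verification that this covers $\mathcal F(X)$ is exactly as you state. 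The degenerate cases ($\bigcup_i U_i=\emptyset$, so $F_0=X$ and $V_{j_0}=X$; and $F_0=\emptyset$, which lies in every $\mathcal C_{V_j}$) go through. Two small points for a final write-up: excise the deliberately false intermediate claim ("I claim $\mathcal C_{V_{j_1}},\dots,\mathcal C_{V_{j_n}}$ cover $\mathcal F(X)$ --- but this is false"), since a proof should not contain an assertion you immediately retract; and note that one only needs that $\{\mathcal I_U\}\cup\{\mathcal C_V\}$ is a subbasis for the join topology, which is immediate from the paper's definition of the Vietoris topology as the join of the lower and upper Vietoris topologies.
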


The next lemma will be useful towards proving some natural subspaces of $\mathcal F(X)$ are closed, hence compact by the previous theorem.

\begin{lem} \label{lem: closed inclusion vietoris}
    If $X$ is a compact Hausdorff space then the set
    $$\{(F,G)\in\mathcal F(X)\times \mathcal{F}(X)\text{ such that }
    F\subseteq G\}$$
    is closed in $\mathcal F(X)\times \mathcal F(X)$ equipped
    with the product of the \emph{lower} topology with the \emph{upper} Vietoris topology, namely $\lowertopo\times\uppervietoristopo$. In particular it is closed for $\lowertopo\times\vietoristopo$ and for $\vietoristopo\times\vietoristopo$.
\end{lem}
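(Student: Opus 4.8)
The plan is to prove that the complement of the set in question is open for $\lowertopo\times\uppervietoristopo$. Fix a pair $(F_0,G_0)\in\mathcal F(X)\times\mathcal F(X)$ with $F_0\not\subseteq G_0$, and choose a point $x\in F_0\setminus G_0$. Since $G_0$ is closed and $X$ is compact Hausdorff, hence regular, the point $x$ can be separated from the closed set $G_0$ by disjoint open sets: there exist open sets $U,V\subseteq X$ with $x\in U$, $G_0\subseteq V$ and $U\cap V=\emptyset$.

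Next I would exhibit the open neighborhood $\mathcal I_U\times\mathcal C_V$ of $(F_0,G_0)$ in $\lowertopo\times\uppervietoristopo$: it is indeed a neighborhood because $x\in F_0\cap U$ (so $F_0\in\mathcal I_U$) and $G_0\subseteq V$ (so $G_0\in\mathcal C_V$). For any $(F,G)$ in this set, pick $y\in F\cap U$; since $G\subseteq V$ and $U\cap V=\emptyset$, we have $y\notin G$, hence $F\not\subseteq G$. Thus $\mathcal I_U\times\mathcal C_V$ is entirely contained in the complement of $\{(F,G)\in\mathcal F(X)\times\mathcal F(X)\colon F\subseteq G\}$, which shows this complement is open and therefore the set itself is closed for $\lowertopo\times\uppervietoristopo$.

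The two "in particular" statements are then immediate from the fact that the Vietoris topology $\vietoristopo$ refines both $\lowertopo$ and $\uppervietoristopo$: both $\lowertopo\times\vietoristopo$ and $\vietoristopo\times\vietoristopo$ are finer than $\lowertopo\times\uppervietoristopo$, and a set closed in a coarser topology remains closed in any finer one. The only point requiring a moment's care is the use of regularity of $X$ (guaranteed by compact Hausdorff) to separate the point $x$ from the closed set $G_0$; the rest is a direct unwinding of the definitions of the subbasic open set $\mathcal I_U$ of the lower topology and the basic open set $\mathcal C_V$ of the upper Vietoris topology.
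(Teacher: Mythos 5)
Your proof is correct and follows exactly the same route as the paper: separate a point $x\in F_0\setminus G_0$ from the closed set $G_0$ by disjoint open sets (using regularity of the compact Hausdorff space $X$) and observe that $\mathcal I_U\times\mathcal C_V$ is a neighborhood of $(F_0,G_0)$ contained in the complement. The added remarks on the ``in particular'' claims are a harmless elaboration of what the paper leaves implicit.
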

\begin{proof}
    We show that the complement is open: take $(F_0,G_0)$ such that $F_0\not\subseteq G_0$. Fix some $x_0\in F_0\setminus G_0$, then since $X$
    is compact we find $U$ open containing $x_0$ disjoint from $V$ open containing $G_0$. 
    The set $\mathcal I_U\times \mathcal C_V$ is an open neighborhood of $(F_0,G_0)$ all whose elements $(F,G)$ satisfy $F\not\subseteq G$ as wanted.
\end{proof}

Specifying to singletons, we have the following fact.
\begin{lem}\label{lem: belonging is closed}
Let $(X,\tau_X)$ be a compact Hausdorff space. 
Then the set of couples $(x,F)$ such that $x\in F$ is closed for the product topology $\tau_X\times\uppervietoristopo$, in particular it is closed for $\tau_X\times\vietoristopo$.
\end{lem}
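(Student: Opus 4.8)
The plan is to deduce this directly from Lemma~\ref{lem: closed inclusion vietoris} by regarding a point as a singleton, which is exactly the ``specifying to singletons'' suggested by the surrounding text. First I would recall from the discussion just before Lemma~\ref{lem: continuity of intersection with fattenings} that the map $\delta\colon X\to\mathcal F(X)$, $x\mapsto\{x\}$, is continuous when $\mathcal F(X)$ is equipped with the lower topology $\lowertopo$: indeed, for any open $U\subseteq X$ the $\delta$-preimage of the subbasic open set $\mathcal I_U$ is precisely $\{x\in X\colon x\in U\}=U$, which is open.

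Next I would introduce the map
$$\iota\colon (X,\tau_X)\times(\mathcal F(X),\uppervietoristopo)\longrightarrow (\mathcal F(X),\lowertopo)\times(\mathcal F(X),\uppervietoristopo),\qquad (x,F)\mapsto(\{x\},F).$$
Its first coordinate is $\delta$ composed with the projection to $X$, hence continuous by the previous paragraph, and its second coordinate is the identity; so $\iota$ is continuous. Since $x\in F$ if and only if $\{x\}\subseteq F$, we obtain
$$\{(x,F)\colon x\in F\}=\iota\inv\bigl(\{(A,B)\in\mathcal F(X)\times\mathcal F(X)\colon A\subseteq B\}\bigr).$$
By Lemma~\ref{lem: closed inclusion vietoris} the set appearing inside $\iota\inv$ is closed for $\lowertopo\times\uppervietoristopo$, so its $\iota$-preimage is closed for $\tau_X\times\uppervietoristopo$. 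Finally, since $\uppervietoristopo\subseteq\vietoristopo$, this set is a fortiori closed for $\tau_X\times\vietoristopo$, which gives the ``in particular'' clause.

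There is no real obstacle here, as the statement is essentially a corollary of Lemma~\ref{lem: closed inclusion vietoris}; the only point worth making explicit is the continuity of $\delta$ into the lower topology. If one preferred an entirely self-contained argument, one could instead argue directly that the complement is open by the same separation mechanism used in the proof of Lemma~\ref{lem: closed inclusion vietoris}: if $x\notin F$, then compactness (hence normality) of $X$ provides disjoint open sets $U\ni x$ and $V\supseteq F$, and $U\times\mathcal C_V$ is then an open neighbourhood of $(x,F)$ in $\tau_X\times\uppervietoristopo$ none of whose members $(x',F')$ satisfies $x'\in F'$, since $x'\in U$ while $F'\subseteq V$ and $U\cap V=\emptyset$.
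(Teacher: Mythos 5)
Your proof is correct and follows essentially the same route as the paper: the paper likewise observes that $x\mapsto\{x\}$ is $\tau_X$ to $\lowertopo$ continuous and then deduces the claim immediately from Lemma~\ref{lem: closed inclusion vietoris}. Your additional direct argument via normality is a fine alternative but not needed.
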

\begin{proof}
    The map $x\in X\mapsto\{x\}\in\mathcal F(X)$ is $\tau_X$ to $\lowertopo$ continuous (actually, it is a topological embedding), so the result follows immediately from the previous lemma.
\end{proof}

Finally, if $X$ is compact Polish, its topology admits a countable basis $(U_n)_{n\in\N}$, and it can easily be checked using compactness that the following sets form a basis for the upper Vietoris topology:
$$
\mathcal C_{\bigcup_{n\in F} U_n}, \text{where } F \text{ ranges over finite subsets of }\N.
$$
Since the lower topology is also secound-countable, 
the Vietoris topology is second-countable as well, 
hence Polish (see also \cite[Sec.~4.7]{kechrisClassicaldescriptiveset1995} for a more general result on the space of compact subsets of a Polish space).

\subsection{The Wijsman topology}\label{sec: wijsman}

Denote by $\mathcal F^*(X)$ the space of closed non-empty subsets of a topological space $X$.
In order to motivate the definition of the Wijsman topology, we begin by noting the following nice way of understanding the lower topology, when $X$ is equipped with a compatible metric.

\begin{prop}\label{prop: hyperspace lsc is usc distance function}
Suppose $(X,d)$ is a metric space. 
Then the lower topology induced on the set $\mathcal 
F^*(X)$ is the coarsest topology which makes the map $F\mapsto 
d(x,F)$ upper semi-continuous for every $x\in X$.
\end{prop}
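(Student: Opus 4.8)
The plan is to prove the two inclusions of topologies separately, working with the subbasic open sets $\mathcal I_U$ of the lower topology on one side and with the sets $\{F : d(x,F) < r\}$ (the subbasic sets of the ``coarsest topology making all $F\mapsto d(x,F)$ upper semi-continuous'') on the other. The key elementary observation, which I would state at the outset, is that for a fixed point $x\in X$ and a fixed $r>0$ one has the identity
\[
\{F\in\mathcal F^*(X) : d(x,F) < r\} = \mathcal I_{B(x,r)},
\]
where $B(x,r)$ is the open ball of radius $r$ about $x$: indeed $d(x,F)<r$ means precisely that there is some $y\in F$ with $d(x,y)<r$, i.e.\ that $F$ meets the open ball $B(x,r)$. (Here it is important that $F$ is non-empty, which is why we restrict to $\mathcal F^*(X)$; on $\mathcal F(X)$ the empty set would satisfy $d(x,\emptyset)=+\infty$ and the statement would need massaging.)

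From this identity, one inclusion is immediate: each set $\{F : d(x,F)<r\}$ equals a subbasic lower-open set $\mathcal I_{B(x,r)}$, so the map $F\mapsto d(x,F)$ is upper semi-continuous for the lower topology; hence the lower topology is finer than (i.e.\ contains) the coarsest topology making all these maps upper semi-continuous. For the reverse inclusion I would show that every subbasic lower-open set $\mathcal I_U$ is open in the topology $\tau$ generated by the upper semi-continuity requirements. Fix $F_0\in\mathcal I_U$, so $F_0\cap U\neq\emptyset$; pick $y_0\in F_0\cap U$, and since $U$ is open choose $r>0$ with $B(y_0,r)\subseteq U$. Then $d(y_0,F_0)=0<r$, so $\mathcal W\coloneqq\{F : d(y_0,F)<r\}$ is a $\tau$-neighbourhood of $F_0$; and by the identity above $\mathcal W=\mathcal I_{B(y_0,r)}\subseteq\mathcal I_U$. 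Thus every point of $\mathcal I_U$ has a $\tau$-neighbourhood contained in $\mathcal I_U$, so $\mathcal I_U\in\tau$, and since the $\mathcal I_U$ form a subbasis for the lower topology we conclude $\lowertopo\subseteq\tau$.

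There is no serious obstacle here; the only point requiring a little care is the bookkeeping around non-emptiness and the $\esssup$-free handling of $d(x,F)$ when $F$ could be empty, which is exactly why the statement is phrased on $\mathcal F^*(X)$. The argument is ``soft'': it reduces entirely to the translation between ``$F$ meets an open ball'' and ``the distance to $x$ is small,'' together with the fact that open balls form a basis for the metric topology on $X$ (so that every open $U$ is a union of balls, and $\mathcal I_U=\bigcup_x\mathcal I_{B(x,r_x)}$ accordingly). I would present it in precisely this order: first the ball identity as a displayed equation, then upper semi-continuity of each distance map, then the neighbourhood argument showing $\mathcal I_U\in\tau$.
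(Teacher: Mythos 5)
Your proof is correct and follows essentially the same route as the paper's: the whole content is the identity $\{F\in\mathcal F^*(X): d(x,F)<r\}=\mathcal I_{B(x,r)}$ together with the fact that the sets $\mathcal I_{B(x,r)}$ form a subbasis for the lower topology (since open balls form a basis of $X$). You simply spell out both inclusions more explicitly than the paper does.
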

\begin{proof}
	Recall that a function $f:X\rightarrow \mathbb{R}$ is upper semi-continuous if for any $r\in \mathbb{R}$, the set $\{x\in X: f(x)<y\}$ is open in $X$. The result now follows from the fact that a subbasis for the lower topology is given by sets of the form $\{F\in\mathcal F^*(X): B(x,r)\cap F\neq\emptyset\}$, which can be rewritten as $\{F\in\mathcal F^*(X): d(x,F)<r\}$.
\end{proof}

For a metric space $(X,d)$, we now endow its space $\mathcal F^*(X)$ with the \textbf{Wijsman topology}, which is the coarsest topology such that for all $x\in X$, the map $F\in\mathcal F^*(X)\mapsto d(x,F)$ is continuous. We denote the Wijsman topology by $\wijsmantopo$.
By the previous lemma, the Wijsman topology refines the lower topology, and it could be defined as the join of the lower topology and the \emph{upper Wijsman topology} $\upperwijsmantopo$, which is defined as the coarsest topology making the map $F\mapsto d(x,F)$ lower semi-continuous, for all $x\in X$. We then have the following analogue of Lemma \ref{lem: closed inclusion vietoris}.

\begin{lem}\label{lem: containement is closed for wijsman}
    Let $(X,d)$ be a metric space. Then the set of all couples $(F,G)\in\mathcal F^*(X)\times \mathcal F^*(X)$ such that $F\subseteq G$ is closed in the
    $\lowertopo\times\upperwijsmantopo$ topology, whence it follows that it is also closed in
    $\lowertopo\times\wijsmantopo$ and $\wijsmantopo\times\wijsmantopo$ topologies.
\end{lem}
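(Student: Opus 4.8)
The plan is to mimic the proof of Lemma \ref{lem: closed inclusion vietoris}, showing the complement is open in the $\lowertopo\times\upperwijsmantopo$ topology. So suppose $(F_0,G_0)\in\mathcal F^*(X)\times\mathcal F^*(X)$ satisfies $F_0\not\subseteq G_0$. Pick a point $x_0\in F_0\setminus G_0$. Since $G_0$ is closed and nonempty, $r\coloneqq d(x_0,G_0)>0$. The idea is that a small lower-topology neighborhood of $F_0$ forces the first coordinate to come close to $x_0$, while a small upper-Wijsman neighborhood of $G_0$ keeps the distance $d(x_0,G)$ bounded below, and these two constraints are incompatible once the tolerances are chosen smaller than $r/2$ (or $r/3$, to be safe).

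Concretely, I would set $O_1\coloneqq\mathcal I_{B(x_0,r/3)}=\{F\in\mathcal F^*(X)\colon F\cap B(x_0,r/3)\neq\emptyset\}$, which is a subbasic open set for the lower topology containing $F_0$ (since $x_0\in F_0\cap B(x_0,r/3)$). For the second coordinate, recall that the upper Wijsman topology is the coarsest one making $G\mapsto d(x_0,G)$ lower semi-continuous; hence $O_2\coloneqq\{G\in\mathcal F^*(X)\colon d(x_0,G)>r/3\}$ is open for $\upperwijsmantopo$, and it contains $G_0$ because $d(x_0,G_0)=r>r/3$. Now take any $(F,G)\in O_1\times O_2$. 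There is a point $y\in F$ with $d(x_0,y)<r/3$, so by the triangle inequality $d(y,G)\geq d(x_0,G)-d(x_0,y)>r/3-r/3=0$, which shows $y\notin G$ (using that $G$ is closed, so $d(y,G)=0$ would force $y\in G$; but in fact $d(y,G)>0$ directly gives $y\notin G$). Hence $F\not\subseteq G$. Therefore $O_1\times O_2$ is an open neighborhood of $(F_0,G_0)$ contained in the complement of $\{(F,G)\colon F\subseteq G\}$, proving that this complement is open in $\lowertopo\times\upperwijsmantopo$.

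Finally, since both $\lowertopo\times\wijsmantopo$ and $\wijsmantopo\times\wijsmantopo$ refine $\lowertopo\times\upperwijsmantopo$ (the Wijsman topology being the join of the lower and the upper Wijsman topologies, by the discussion preceding the statement), any set closed for the coarser topology is closed for the finer ones, giving the two asserted consequences. I do not anticipate a genuine obstacle here: the only mild subtlety is being careful with the nonemptiness hypotheses (needed both so that $d(x_0,G_0)$ is a well-defined positive real and so that we are working inside $\mathcal F^*(X)$ where the Wijsman topology is defined), and choosing the radius strictly below $r/2$ so the triangle-inequality estimate is strict.
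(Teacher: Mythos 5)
Your proof is correct and takes essentially the same route as the paper: pick $x_0\in F_0\setminus G_0$, use a lower-topology neighborhood $\{F\colon d(x_0,F)<\eps\}$ and an upper-Wijsman neighborhood $\{G\colon d(x_0,G)>\eps'\}$, and conclude by the triangle inequality. The only cosmetic difference is the choice of radii (the paper takes $\eps=d(x_0,G_0)/3$ and $\eps'=2\eps$, you take both equal to $r/3$), which does not affect the argument.
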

\begin{proof}
    Suppose $F_0\not\subseteq G_0$, take $x_0\in F_0\setminus G_0$, then $d(x_0,G_0)>0$. Let $\epsilon=d(x_0,G_0)/3$. By the previous lemma, the set of all $F$ such that $d(x_0,F)<\epsilon$ is $\lowertopo$-open, while the set of all $G$ such that $d(x_0,G_0)>2\epsilon$ is $\upperwijsmantopo$ open by definition. 
    Thus, the set of $(F,G)$ satisfying $d(x_0, F)<\epsilon$ and $d(x_0,G)>2\epsilon$ defines a $\lowertopo\times\upperwijsmantopo$ open neighbourhood of $(F_0,G_0)$, for which we clearly have $F\not\subseteq G$ for every such $(F,G)$, which finishes the proof. 
\end{proof}

The following important theorem is due to Beer.

\begin{thm}[{\cite[Thm.~4.3]{beerPolishtopologyclosed1991}}]
\label{thm: Wijsman Polish}
    Let $(X,d)$ be a complete separable metric space. 
    Then $\mathcal F^*(X)$ is Polish for the Wijsman topology
    associated to the metric $d$.
\end{thm}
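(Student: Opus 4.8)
The plan is to realise $(\mathcal F^*(X),\wijsmantopo)$ as a $G_\delta$ subset of a countable power of $\R$ and then to invoke the characterisation recalled in Section~\ref{sec:prelim}: a subset of a Polish space is Polish for the induced topology if and only if it is $G_\delta$. Fix a countable dense set $\{x_n:n\in\N\}$ in $X$ and consider
\[
\iota\colon \mathcal F^*(X)\to \R^\N,\qquad F\mapsto \bigl(d(x_n,F)\bigr)_{n\in\N}.
\]
First I would verify that $\iota$ is a topological embedding. It is injective because $d(\cdot,F)$ is continuous and $F=d(\cdot,F)^{-1}(\{0\})$, so two closed sets with equal distances to a dense set coincide. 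Each coordinate map is continuous by the very definition of $\wijsmantopo$; conversely, since $\abs{d(x,F)-d(x_n,F)}\leq d(x,x_n)$ \emph{uniformly in} $F$, each map $F\mapsto d(x,F)$ is a uniform limit of coordinate maps as $x_n\to x$, hence continuous for the initial topology of $\iota$. Thus $\wijsmantopo$ coincides with the initial topology of $\iota$, so $\iota$ is an embedding; in particular $\mathcal F^*(X)$ is separable metrizable, and it remains to show that $\iota(\mathcal F^*(X))$ is $G_\delta$ in $\R^\N$.

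Next I would pin down the image by two conditions on a sequence $a=(a_n)\in\R^\N$: (i) $a_n\leq a_m+d(x_n,x_m)$ for all $n,m$; and (ii) for all $n,k$ there is some $m$ with $d(x_n,x_m)<a_n+2^{-k}$ and $a_m<2^{-k}$. Condition (i) cuts out a closed set, and (ii) a $G_\delta$ set, since for fixed $n,k$ the existential statement is a countable union over $m$ of sets defined by two strict inequalities on coordinates, hence open in $\R^\N$; so $\{a:\text{(i) and (ii)}\}$ is $G_\delta$. That $\iota(F)$ satisfies (i) is just that $d(\cdot,F)$ is $1$-Lipschitz; that it satisfies (ii) follows by picking $y\in F$ almost realising $d(x_n,F)$ and then $x_m$ close to $y$ by density. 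Under (i), the function $g(x)\coloneqq\inf_m\bigl(a_m+d(x,x_m)\bigr)$ is the unique $1$-Lipschitz function on $X$ with $g(x_n)=a_n$, and for $F\coloneqq g^{-1}(\{0\})$ one has $g\leq d(\cdot,F)$ for free; the point is to reverse this and, beforehand, to know that $F\neq\emptyset$.

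The heart of the argument — and the only place completeness of $X$ is used — is the converse: if $a$ satisfies (i) and (ii) then $F=g^{-1}(\{0\})$ is a nonempty closed set with $g=d(\cdot,F)$, whence $\iota(F)=a$ by (i). Starting from any index and iterating (ii) with rapidly shrinking tolerances $2^{-k}$, one builds a sequence $(x_{n_j})_j$ along which $a_{n_j}\to 0$ geometrically and $\sum_j d(x_{n_j},x_{n_{j+1}})<\infty$; this sequence is Cauchy, so by completeness it converges to a point where $g$ (being $1$-Lipschitz with $g(x_{n_j})=a_{n_j}\to 0$) vanishes, giving $F\neq\emptyset$. Running the same iteration from a point $x_m$ almost realising $g(x)=\inf_m(a_m+d(x,x_m))$ produces $y\in F$ with $d(x,y)$ arbitrarily close to $g(x)$, hence $d(x,F)\leq g(x)$, so $g=d(\cdot,F)$. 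Then $\iota(\mathcal F^*(X))=\{a:\text{(i) and (ii)}\}$, a $G_\delta$ subset of $\R^\N$, and $\mathcal F^*(X)$ is Polish.

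I expect the main obstacle to be exactly what makes the statement non-trivial: the obvious compatible metric $\rho(F,G)=\sum_n 2^{-n}\min\{1,\abs{d(x_n,F)-d(x_n,G)}\}$ need \emph{not} be complete, so one cannot simply exhibit a complete metric. The real work is in choosing the approximate-realisability condition (ii) — weak enough to be $G_\delta$, yet strong enough to force, via completeness of $X$, the existence of the limiting closed set — and in the bookkeeping of the Cauchy construction that reconstructs a genuine closed set from its distance data.
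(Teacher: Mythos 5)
The paper does not actually prove this statement: it is imported wholesale from Beer's article, so there is no internal proof to compare yours against. Your argument is correct and self-contained, and it follows what is essentially the standard (Beer-style) route: embed $\mathcal F^*(X)$ into $\R^\N$ via the distance functionals at a countable dense set, observe that the uniform estimate $\abs{d(x,F)-d(x_n,F)}\leq d(x,x_n)$ makes these countably many coordinates generate all of $\wijsmantopo$ (so $\iota$ is an embedding and the space is separable metrizable), and then characterize the image by your conditions (i) and (ii) so as to apply the fact, recalled in Section \ref{sec:prelim}, that $G_\delta$ subspaces of Polish spaces are Polish. I checked the delicate points: (ii) is indeed open for each fixed $(n,k)$ and forces $a_n\geq 0$ in the limit $k\to\infty$; under (i) the function $g(x)=\inf_m\bigl(a_m+d(x,x_m)\bigr)$ is the $1$-Lipschitz interpolant with $g(x_n)=a_n$; the telescoping iteration of (ii) produces a Cauchy sequence whose limit (here, and only here, completeness of $d$ is used) shows $F=g^{-1}(\{0\})$ is nonempty and, when started from an almost-minimizing index, yields $d(\cdot,F)\leq g$, while $g\leq d(\cdot,F)$ follows from $g$ being $1$-Lipschitz and vanishing on $F$. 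Your closing diagnosis is also accurate: the obvious compatible metric on the image need not be complete, which is exactly why one passes through the $G_\delta$ characterization rather than exhibiting a complete metric directly.
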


\begin{rmk}
For $(X,d)$ a compact metric space, the Vietoris topology conincides with the Wijsman topology on $\mathcal F^*(X)$. Indeed, it is not hard to see that the Wijsman topology is refined by the Vietoris topology (as for instance shown in the beginning of the proof of \cite[Thm.~2.2.5]{beerTopologiesClosedClosed1993}),
so since the latter is compact and Hausdorff (Theorem \ref{thm: Vietoris is compact Hausdorff}), they have to coincide. 
\end{rmk}

\section{Duality for grassmanians}\label{sec: grassman}

\subsection{Vietoris topology on closed subsets of the unit ball of the dual}

Let $(E,\norm\cdot)$ be a normed vector space, then consider its dual $E^*$, which is a Banach space for the norm $\norm\omega\coloneqq\sup_{x\in (E)_1} \abs{\omega(x)}$, where $(E)_1$ is the closed unit ball of $E$. Further, the dual $E^*$ can be equipped
with the weak-$*$ topology, defined as the weakest topology making the map
$\omega\in E^*\mapsto \omega(x)$ continuous for every $x\in E$. 
The closed unit ball $(E^*)_1$ is then weak-$*$ compact as a consequence of the Banach-Alaoglu theorem. We can thus equip its space of closed (equivalently compact) subsets $\mathcal F((E^*)_1)$ with the Vietoris topology as defined in Section \ref{sec: vietoris topo}.

For every $F\in \mathcal F((E^*)_1)$ non-empty and $x\in E$, define 
\begin{equation}\label{eq: definition of rhoF}
   \rho_F(x)=\sup_{\omega\in F}\abs{\omega(x)}=\max_{\omega\in F}\abs{\omega(x)}
\end{equation}
and by convention let $\rho_\emptyset(x)=0$. Note that by the Hahn-Banach theorem,
if $F=(E^*)_1$, then $\rho_F=\norm{\cdot}$.

\begin{lem}\label{lem: pointwise continuity}
    Fix $x\in E$. Then the map 
    \[
    F\in \mathcal F((E^*)_1)\mapsto \rho_F(x)
    \]
    is continuous for the Vietoris topology.
\end{lem}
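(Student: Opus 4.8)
The plan is to prove that $F\mapsto\rho_F(x)$ is simultaneously lower semi-continuous and upper semi-continuous, since a real-valued function is continuous exactly when it is both, and since lower (resp.\ upper) semi-continuity with respect to a coarser topology persists under passage to any finer topology. Concretely I will establish lower semi-continuity already for the lower topology $\lowertopo$, and upper semi-continuity already for the upper Vietoris topology $\uppervietoristopo$; as $\vietoristopo$ refines both, the conclusion follows. Throughout, the key elementary input is that the evaluation $\omega\mapsto\omega(x)$ is weak-$*$ continuous on $(E^*)_1$ by the very definition of the weak-$*$ topology, so that $\omega\mapsto\abs{\omega(x)}$ is weak-$*$ continuous as well.

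For lower semi-continuity, fix $r\in\R$. The case $r<0$ is trivial since then $\{F:\rho_F(x)>r\}$ is all of $\mathcal F((E^*)_1)$, so assume $r\geq 0$. I claim that
\[
\{F\in\mathcal F((E^*)_1):\rho_F(x)>r\}=\mathcal I_{U_r},\qquad U_r\coloneqq\{\omega\in (E^*)_1:\abs{\omega(x)}>r\}.
\]
Indeed, $\rho_F(x)>r$ means precisely that some $\omega\in F$ satisfies $\abs{\omega(x)}>r$, i.e.\ $F\cap U_r\neq\emptyset$ (this also correctly handles $F=\emptyset$, for which $\rho_\emptyset(x)=0\not>r$). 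Since $U_r$ is weak-$*$ open, $\mathcal I_{U_r}$ is $\lowertopo$-open, so $F\mapsto\rho_F(x)$ is lower semi-continuous for $\lowertopo$, hence for $\vietoristopo$.

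For upper semi-continuity, which is the one step genuinely using compactness, fix $r\in\R$. The case $r\leq 0$ is trivial since then $\{F:\rho_F(x)<r\}$ is empty, so assume $r>0$ and set $V_r\coloneqq\{\omega\in (E^*)_1:\abs{\omega(x)}<r\}$, a weak-$*$ open set. I claim
\[
\{F\in\mathcal F((E^*)_1):\rho_F(x)<r\}=\mathcal C_{V_r}.
\]
The inclusion $\supseteq$ is where compactness enters: any $F\in\mathcal F((E^*)_1)$ is weak-$*$ compact, being a closed subset of the weak-$*$ compact ball $(E^*)_1$, so the supremum in \eqref{eq: definition of rhoF} defining $\rho_F(x)$ is attained (this is the reason the $\max$ appears there); hence $F\subseteq V_r$ forces $\rho_F(x)=\max_{\omega\in F}\abs{\omega(x)}<r$, and $F=\emptyset$ gives $\rho_\emptyset(x)=0<r$. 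The inclusion $\subseteq$ is immediate. Since $V_r$ is weak-$*$ open, $\mathcal C_{V_r}$ is $\uppervietoristopo$-open, so $F\mapsto\rho_F(x)$ is upper semi-continuous for $\uppervietoristopo$, hence for $\vietoristopo$. Combining the two semi-continuity statements yields continuity for the Vietoris topology. I do not anticipate any real obstacle; the only points requiring a little care are the appeal to compactness of $F$ in the upper bound and the bookkeeping for $F=\emptyset$ and for the extreme values of $r$.
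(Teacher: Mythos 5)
Your proof is correct and takes essentially the same approach as the paper: the paper argues pointwise at a fixed $F_0$ with $\mathcal I_V$ and $\mathcal C_W$ neighborhoods, and its own remark notes that this amounts exactly to lower semi-continuity for $\lowertopo$ and upper semi-continuity for $\uppervietoristopo$, which is what you prove directly via superlevel and sublevel sets. Your use of compactness of $F$ to turn $F\subseteq V_r$ into a strict bound is the same point where the paper's $\mathcal C_W$ argument relies on the supremum being a maximum.
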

\begin{proof}
        Since the empty set is isolated for the Vietoris topology, 
    we may as well take $F_0\in\mathcal F((E^*)_1)$ non empty and $\epsilon>0$; we then need to find a neighborhood $\mathcal U$ of $F_0$ such that for all 
    $F\in \mathcal U$ we have $\abs{\rho_F(x)-\rho_{F_0}(x)}<\epsilon$.

    Let $\omega_0\in F_0$ be such that $\rho_{F_0}(x)=\abs{\omega_0(x)}$.
    Let $V$ be the weak-$*$ open neighborhood of $\omega_0$ defined by 
    \[
    V=\{\omega\in (E^*)_1\colon\abs{\omega(x)-\omega_0(x)}<\epsilon\}.
    \]
    Then for all $\omega\in V$ we have $\abs{\omega(x)}>\abs{\omega_0(x)}-\epsilon$, so by definition for all $F\in \mathcal I_V$ we have $\rho_F(x)>\rho_{F_0}(x)-\epsilon$. 
    
    Towards the other inequality, consider the following larger weak-$*$ open neighborhood of $\omega_0$: 
    \begin{align*}
    W&=\{\omega\in (E^*)_1\colon \exists \omega'\in F_0, \abs{\omega'(x)-\omega(x)}<\epsilon\}\\&=\bigcup_{\omega'\in F_0}\{\omega\in (E^*)_1\colon \abs{\omega'(x)-\omega(x)}<\epsilon\}.
    \end{align*}
    Then $F_0\subseteq W$, and it is straightforward to check that for all $F\in \mathcal C_W$, we have
    $\rho_F(x)<\rho_{F_0}(x)+\epsilon$.
    
    We conclude that for all $F$ belonging to the open neighborhood $\mathcal U=\mathcal I_V\cap \mathcal C_W$ of $F_0$, we have 
    $\abs{\rho_F(x)-\rho_{F_0}(x)}<\epsilon$ as wanted.
\end{proof}
\begin{rmk}
The proof actually shows that $F\mapsto \rho_F(x)$ is lower semi-continuous if we endow $\mathcal F(X)$ with the lower topology, and upper semi-continuous if we endow $\mathcal F(X)$ with the upper Vietoris topology.
\end{rmk}

We will now use compactness so as to upgrade the previous lemma on a smaller set of closed subsets, namely convex \emph{balanced} closed subsets. Recall that a subset $F$ of $E^*$ is \textbf{balanced} if for every $\lambda\in(\C)_1$ and every $x\in F$, we have $\lambda x\in F$.
Denote by $\convexbalanced((E^*)_1)$ the space of balanced closed convex
subsets of $(E^*)_1$. We have the following well-known proposition.

\begin{prop}\label{prop: convexbalanced is compact}
    The space $\convexbalanced((E^*)_1)$ is a compact Hausdorff subspace of $\mathcal F((E^*)_1)$ for the Vietoris topology.
\end{prop}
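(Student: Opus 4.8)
The plan is to show that $\convexbalanced((E^*)_1)$ is a closed subset of $\mathcal F((E^*)_1)$ for the Vietoris topology; it is then compact and Hausdorff by Theorem \ref{thm: Vietoris is compact Hausdorff}. So I would work with the complement and show that it is open.

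First I would dispose of the empty set, which is isolated in the Vietoris topology via $\mathcal C_\emptyset$, and which we may as well declare to belong to $\convexbalanced((E^*)_1)$ (or simply note that since it is isolated, removing it changes nothing about closedness). For a non-empty $F_0\notin\convexbalanced((E^*)_1)$, one of two things fails: either $F_0$ is not convex, or $F_0$ is not balanced (it is automatically closed since we are inside $\mathcal F((E^*)_1)$). I would treat these two cases separately, in each case producing a Vietoris-open neighborhood $\mathcal U$ of $F_0$ consisting of sets that fail the same property.

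For the non-convex case: there are $\omega_1,\omega_2\in F_0$ and $t\in[0,1]$ with $\omega\coloneqq t\omega_1+(1-t)\omega_2\notin F_0$. Since $F_0$ is weak-$*$ closed and $\{\omega\}$ is weak-$*$ compact, by compactness there is a weak-$*$ open neighborhood $V$ of $\omega$ such that $\overline V$ is disjoint from $F_0$; shrinking $V$, pick basic weak-$*$ open neighborhoods $V_1\ni\omega_1$, $V_2\ni\omega_2$ of the form $\{\mu : |\mu(x_j)-\omega_j(x_j)|<\delta, j\in J\}$ such that the convex combination $tV_1+(1-t)V_2$ lands inside $V$ — this is possible because the finitely many linear functionals $\mu\mapsto\mu(x)$ defining $V$ are continuous and affine, so one can make the combination as close to $\omega$ as desired by making $V_1,V_2$ small. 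Then $\mathcal U\coloneqq \mathcal I_{V_1}\cap\mathcal I_{V_2}\cap\mathcal C_{((E^*)_1)\setminus\overline V}$ is a Vietoris-open neighborhood of $F_0$: any $F\in\mathcal U$ meets $V_1$ in some $\mu_1$ and $V_2$ in some $\mu_2$, so $t\mu_1+(1-t)\mu_2\in V$, yet $F\subseteq ((E^*)_1)\setminus\overline V$ forces $t\mu_1+(1-t)\mu_2\notin F$, so $F$ is not convex.

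For the non-balanced case the argument is entirely parallel: there are $\omega_0\in F_0$ and $\lambda\in(\C)_1$ with $\lambda\omega_0\notin F_0$; choose a weak-$*$ open $V\ni\lambda\omega_0$ with $\overline V\cap F_0=\emptyset$ and a basic weak-$*$ open $V_0\ni\omega_0$ small enough that $\lambda V_0\subseteq V$ (again using that $\mu\mapsto\lambda\mu(x)$ is continuous), and set $\mathcal U\coloneqq \mathcal I_{V_0}\cap\mathcal C_{((E^*)_1)\setminus\overline V}$. Any $F\in\mathcal U$ contains some $\mu_0\in V_0$ with $\lambda\mu_0\in V$, while $F\cap\overline V=\emptyset$, so $F$ is not balanced. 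The union of the two families of neighborhoods witnesses that the complement of $\convexbalanced((E^*)_1)$ is Vietoris-open, hence $\convexbalanced((E^*)_1)$ is closed in the compact Hausdorff space $\mathcal F((E^*)_1)$, and therefore itself compact Hausdorff.

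The main obstacle is the bookkeeping in the non-convex case: one must choose the basic neighborhoods $V_1,V_2$ of $\omega_1,\omega_2$ compatibly with the (finitely many, but possibly different) coordinate functionals cutting out $V$, so that $tV_1+(1-t)V_2\subseteq V$. This is a routine finite-dimensional perturbation argument — enlarge the index set of coordinates so all three neighborhoods are described by the same finite set of $x\in E$ — but it is the one place where a little care is needed rather than a one-line appeal to a prior lemma.
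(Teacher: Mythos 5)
Your proof is correct. It shares the paper's skeleton --- show that $\convexbalanced((E^*)_1)$ is closed in $\mathcal F((E^*)_1)$ and invoke Theorem \ref{thm: Vietoris is compact Hausdorff} --- but establishes closedness by a genuinely different route. The paper expresses the convexity and balancedness conditions as intersections, over $t\in[0,1]$ and $\lambda\in(\C)_1$, of preimages of the $\lowertopo\times\uppervietoristopo$-closed inclusion relation $\{(F,G)\colon F\subseteq G\}$ (Lemma \ref{lem: closed inclusion vietoris}) under the hyperspace maps $F\mapsto(tF+(1-t)F,F)$ and $F\mapsto(\lambda F,F)$, whose continuity comes from Lemma \ref{lem: induced continuity for lower}; the only pointwise ingredient is the condition $0\in F$, handled by Lemma \ref{lem: belonging is closed}. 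You instead show directly that the complement is open, using regularity of the weak-$*$ compact unit ball to separate the offending point $t\omega_1+(1-t)\omega_2$ (resp.\ $\lambda\omega_0$) from $F_0$, and then combining $\mathcal I$-type and $\mathcal C$-type basic Vietoris neighborhoods; the perturbation step $tV_1+(1-t)V_2\subseteq V$ is routine once all three neighborhoods are cut out by the same finite set of coordinate functionals, as you note. Your separation argument is essentially what powers the paper's Lemma \ref{lem: closed inclusion vietoris}, so the two proofs have the same mathematical core; the paper's packaging buys reusability (the same lemmas are recycled almost verbatim for the Wijsman analogue, Proposition \ref{prop: subspace is Wijsman closed}), while yours is more self-contained at the cost of the bookkeeping you flag. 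A small simplification: instead of $\mathcal C_{((E^*)_1)\setminus\overline V}$ you could take $\mathcal C_W$ with $W$ open, containing $F_0$ and disjoint from $V$, which avoids closures; either variant works since $(E^*)_1$ is compact Hausdorff, hence regular.
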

\begin{proof}
    By Theorem \ref{thm: Vietoris is compact Hausdorff} the space $\mathcal F((E^*)_1)$ is compact and Hausdorff, so it suffices for us to show that $\convexbalanced((E^*)_1)$ is closed in $\mathcal F((E^*)_1)$.

    To this end, we first show that closed balanced subsets of $(E^*)_1$ form a closed subspace of $\mathcal F((E^*)_1)$. 
    
    By Lemma \ref{lem: belonging is closed}, the set of all $F\in\mathcal F((E^*)_1)$ such that $0\in F$ is closed.
    Now, for for every non-zero $\lambda\in(\C)_1$ consider the homeomorphism $m_\lambda:(E^*)_1\to \abs{\lambda}(E^*)_1$ defined by $m_\lambda(\omega)=\lambda\omega$. 
    It follows that $M_\lambda: \mathcal F((E^*)_1)\to\mathcal F(\abs{\lambda}(E^*)_1)\subseteq\mathcal F((E^*)_1)$ given by $F\mapsto \lambda F$ is a $\vietoristopo$ to $\vietoristopo$ homeomorphism, which yields that the subspace consisting of all 
    $F\in\mathcal F((E^*)_1)$ such that $M_\lambda(F)\subseteq F$ is closed by Lemma
    \ref{lem: closed inclusion vietoris}.
    The intersection over $\lambda\in(\C)_1$ of these closed subspaces with the closed subspace of closed subsets containing $0$ is the subspace of all balanced closed subsets, which is thus closed as wanted.
    
    We next show that convexity also defines a closed subspace, for which we
    need to use the lower topology.
    Namely, for every $t\in[0,1]$, consider the continuous map $f^t:(E^*)_1\times (E^*)_1\to (E^*)_1$ which takes $(\omega_1,\omega_2)$ to $t\omega_1+(1-t)\omega_2$. By Lemma \ref{lem: induced continuity for lower}, 
    it induces a continuous map 
    \[
    f^t_*:\mathcal F\left((E^*)_1\times(E^*)_1\right)\to \mathcal F(E^*)_1)  
    \]
    for the lower topologies. In particular the restriction of $f^t_*$ to 
    $$\mathcal F((E^*)_1)\times \mathcal F((E^*)_1)\subseteq \mathcal F((E^*)_1\times(E^*)_1)$$ is $\lowertopo\times\lowertopo$ to $\lowertopo$ continuous. Note that for all $A,B\in\mathcal F((E^*)_1)$, we have 
    $f^t_*(A,B)=tA+(1-t)B$.
    
    It follows that the map $$\Phi_t:F\mapsto (tF+(1-t)F, F)=(f^t_*(F,F),F)$$is $\vietoristopo$ to
    $\lowertopo\times\vietoristopo$ continuous. Since $tF+(1-t)F\subseteq F$ is equivalent to $F\in\Phi_t\inv(\{(F,G)\colon F\subseteq G\})$, and since 
    $\{(F,G)\colon F\subseteq G\}$ is $\lowertopo\times\vietoristopo$-closed by Lemma \ref{lem: closed inclusion vietoris},
    we conclude that the set of all $F\in \mathcal F((E^*)_1)$ such that $tF+(1-t)F\subseteq F$ is $\vietoristopo$-closed. Taking the intersection
    over all $t\in[0,1]$ of the spaces of such $F$'s, we arrive at the desired conclusion that the space of convex closed subsets of $(E^*)_1$ is closed 
    in $\mathcal F((E^*)_1)$. 
    
    Since the space $\convexbalanced((E^*)_1)$ is the intersection of the closed 
    space of balanced subsets with the closed space of convex subsets, it is also  closed as wanted.
\end{proof}

The following proposition is key to understanding the Vietoris topology on $\convexbalanced(E^*)$.

\begin{prop}\label{prop: preparation duality}
    Let $(E,\norm\cdot)$ be a normed vector space, then the map
    \[
    \begin{array}{rcc}
         \rho: \convexbalanced((E^*)_1)&\to&\R^E \\
         F&\mapsto& \rho_F
    \end{array}
    \]
    where $\rho_F$ is given by Equation \eqref{eq: definition of rhoF},
    is a homeomorphism onto its image.
\end{prop}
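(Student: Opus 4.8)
The plan is to show $\rho$ is a continuous injection from the compact Hausdorff space $\convexbalanced((E^*)_1)$ (Proposition~\ref{prop: convexbalanced is compact}) into $\R^E$ with the product topology; since a continuous bijection from a compact space to a Hausdorff space is a homeomorphism, this will immediately give that $\rho$ is a homeomorphism onto its image (as $\R^E$ is Hausdorff). So there are exactly two things to check: continuity of $\rho$ into $\R^E$, and injectivity of $\rho$.

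\emph{Continuity.} The product topology on $\R^E$ is generated by the evaluation maps $g\mapsto g(x)$ for $x\in E$, so $\rho$ is continuous into $\R^E$ if and only if $F\mapsto \rho_F(x)$ is continuous for every fixed $x\in E$. But this is exactly the content of Lemma~\ref{lem: pointwise continuity} (which applies on all of $\mathcal F((E^*)_1)$, hence a fortiori on the subspace $\convexbalanced((E^*)_1)$). So continuity is immediate from the earlier lemma.

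\emph{Injectivity.} This is where the convexity and balancedness hypotheses enter; it should be seen as a bipolar-theorem argument. Suppose $F_1, F_2 \in \convexbalanced((E^*)_1)$ with $\rho_{F_1} = \rho_{F_2}$; I claim $F_1 = F_2$. By symmetry it suffices to show $F_1 \subseteq F_2$. Suppose not, and pick $\omega_0 \in F_1 \setminus F_2$. Since $F_2$ is a weak-$*$ closed convex subset of $(E^*)_1$ and $\{\omega_0\}$ is a disjoint compact convex set, the Hahn–Banach separation theorem in the locally convex space $(E^*, \text{weak-}*)$ — whose continuous linear functionals are exactly evaluations at points of $E$ — yields some $x \in E$ and a real number $c$ with $\Real\,\omega_0(x) > c \geq \Real\,\omega(x)$ for all $\omega \in F_2$. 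Because $F_2$ is balanced, for each $\omega \in F_2$ we may replace $\omega$ by $\lambda\omega$ with $|\lambda|=1$ chosen so that $\lambda\omega(x) = |\omega(x)|$, giving $|\omega(x)| \le c$ for all $\omega \in F_2$, hence $c \ge 0$ and $\rho_{F_2}(x) \le c$. On the other hand $\rho_{F_1}(x) \ge |\omega_0(x)| \ge \Real\,\omega_0(x) > c \ge \rho_{F_2}(x)$, contradicting $\rho_{F_1} = \rho_{F_2}$. Hence $F_1 \subseteq F_2$, and symmetrically $F_2 \subseteq F_1$, so $F_1 = F_2$.

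The main obstacle is the injectivity step, and more precisely making sure the separation argument is deployed correctly: one must separate a point from a weak-$*$ closed convex set using a weak-$*$ continuous functional (i.e., an element of $E$, not of $E^{**}$), and then use balancedness to pass from the real-linear separating inequality to a statement about $|\omega(x)|$, which is what $\rho_F$ records. Continuity, by contrast, is a direct appeal to Lemma~\ref{lem: pointwise continuity}, and the compact-to-Hausdorff upgrade to a homeomorphism onto the image is then automatic.
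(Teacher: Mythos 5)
Your proposal is correct and follows essentially the same route as the paper: continuity via Lemma~\ref{lem: pointwise continuity}, the compact-to-Hausdorff upgrade, and injectivity via Hahn--Banach separation in $(E^*,\text{weak-}*)$ using that its continuous functionals are evaluations at points of $E$. The paper packages the injectivity step as the bipolar-type identity $F=\{\omega\in(E^*)_1\colon \forall x\in E,\ \abs{\omega(x)}\leq\rho_F(x)\}$ and cites the separation and duality theorems directly, whereas you spell out the passage from the real-linear separating inequality to the modulus bound via balancedness, but the argument is the same.
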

\begin{proof}
    By Lemma \ref{lem: pointwise continuity}, the map $\rho$ is continuous. Now, since $\convexbalanced((E^*)_1)$ is compact and $\R^E$ is Hausdorff, it suffices for us to show $\rho$ is injective. 
    This is an immediate consequence of the fact that 
    for all $F\in\convexbalanced((E^*)_1)$, we have that 
    \[
    F=\{\omega\in (E^*)_1\colon \forall x\in E,\, \abs{\omega(x)}\leq \rho_F(x)\}.
    \]
    To see why this holds, note that the inclusion $F\subseteq\{\omega\in (E^*)_1\colon \forall x\in (E)_1, \,\abs{\omega(x)}\leq \rho_F(x)\}$ follows from the definition of $\rho_F$. 
    Conversely, suppose $\omega_0\notin F$, then the Hahn-Banach theorem \cite[Thm.~3.7]{rudinFunctionalAnalysis2007} and duality \cite[Thm.~3.10]{rudinFunctionalAnalysis2007} grant us $x\in E$
    such that $\omega_0(x)>1$ but $\abs{\omega(x)}\leq 1$ for all $\omega\in F$.
    It follows that $\rho_F(x)\leq 1$, but then $\omega_0(x)>1\geq\rho_F(x)$ so 
    $\omega_0\notin\{\omega\in (E^*)_1\colon \forall x\in E, \abs{\omega(x)}\leq \rho_F(x)\}$, as wanted. 
\end{proof}

\subsection{Vietoris topology on grassmanian of dual spaces}

Given a normed vector space $(E,\norm\cdot)$, let us denote by $\mathcal G_{w*}(E^*)$
the set of all weak-$*$ closed subspaces of $E^*$, which we call the \textbf{grassmanian} of $E^*$.
Note that any subspace of a normed space is completely determined by its unit ball which is convex and balanced.
Thus, we may view $\mathcal G_{w*}(E^*)$ as a subspace of the compact Hausdorff space 
$\convexbalanced((E^*)_1)$ and we endow it with the induced topology, 
as Maréchal does at the beginning of her paper \cite{marechalTopologieStructureBorelienne1973}.

However, contrary to what 
she states there, $\mathcal G_{w*}(E^*)$ is \textit{not} compact in general, or equivalently it can \textit{fail} to be closed in $\convexbalanced((E^*)_1)$, even in the case when $E^*$ is a von Neumann algebra.
Here is a counterexample.

\begin{countex}\label{countex: not compact}
Consider the von Neumann algebra $\ell^\infty(\N)=(\ell^1(\N))^*$, which is generated by $(\delta_n)_{n\geq 1}$ where $\delta_n(m)=0$ if $n\neq m$ and $\delta_n(m)=1$ if $n=m$. 
For every $n\geq 1$, let $V_n=\C(\frac 12\delta_0+\delta_n)$, whose unit ball is 
$$B_n=(\C)_1\cdot (\frac 1{2} \delta_0+\delta_n).$$
Observe that the map $\xi\in(\ell^\infty(\N))_1\mapsto (\C)_1\xi$ is continuous if we endow $(\ell^\infty(\N))_1$ with the weak-$*$ topology and $\mathcal F((\ell^\infty(\N))_1)$ with the associated Vietoris topology. 
Since $\delta_i\to 0$ for the weak-$*$ topology, we have 
$$\lim_{n\to+\infty}B_n= (\C)_1\cdot \frac 1{2}\delta_0,$$
which is not the unit ball of a weak-$*$ closed vector subspace of $\ell^\infty(\N)$.
\end{countex}


We will see later on that $\mathcal G_{w*}(E^*)$ is however Polish when $E$ is separable (see Corollary \ref{cor: weakstar grassmanian is Polish}), 
which is sufficient to fix Maréchal's proof.


Proposition \ref{prop: preparation duality} allows us to put the equivalence between (ii) and (iii) in \cite[Thm.~2.8]{haagerupEffrosMarechalTopologySpace1998} in the proper context, where for $\omega\in E^*$ and $x\in E$ we let $\la \omega,x \ra=\omega(x)$.

\begin{prop}\label{PropConvergenceSubspace}
    Let $(E,\norm\cdot)$ be a normed vector space. Let $(V_i)_{i\in I}$ be a net consisting of elements of $\mathcal G_{w*}(E^*)$, let $V\in \mathcal G_{w*}(E^*)$. Then the following are equivalent:  
    \begin{enumerate}[(i)]
        \item $(V_i)_1\to(V)_1$ in the Vietoris topology;
        \item for all $x\in E$, we have 
        \[
        \norm{\la\cdot , x\ra}_{\restriction V_i}\to \norm{\la\cdot,x\ra}_{\restriction V}.
        \]
    \end{enumerate}
\end{prop}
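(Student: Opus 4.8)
The key observation is that for a weak-$*$ closed subspace $V\le E^*$, the closed unit ball $(V)_1$ is a balanced closed convex subset of $(E^*)_1$, so it lies in $\convexbalanced((E^*)_1)$, and Proposition \ref{prop: preparation duality} tells us the Vietoris topology on this space is exactly the topology of pointwise convergence of the functions $\rho_{(V)_1}\in\R^E$. So the plan is simply to unwind the two sides of the equivalence through this identification. First I would note that for $V\in\mathcal G_{w*}(E^*)$ and $x\in E$, one has $\rho_{(V)_1}(x)=\sup_{\omega\in(V)_1}\abs{\omega(x)}=\norm{\la\cdot,x\ra}_{\restriction V}$, since the norm of the restriction of the evaluation functional $\la\cdot,x\ra$ to the Banach space $V$ is by definition the supremum of $\abs{\omega(x)}$ over $\omega$ in the unit ball of $V$. (Here one should be a little careful: $(V)_1$ is the unit ball of $V$ with the \emph{induced} norm from $E^*$, which is the same as the norm on $V$, so there is genuinely nothing to check beyond unpacking definitions.)

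Given this identity, the equivalence is essentially immediate. For the implication $(i)\Rightarrow(ii)$: if $(V_i)_1\to(V)_1$ in the Vietoris topology, then by Lemma \ref{lem: pointwise continuity} (or directly Proposition \ref{prop: preparation duality}) we get $\rho_{(V_i)_1}(x)\to\rho_{(V)_1}(x)$ for every $x\in E$, which by the identity above is precisely $\norm{\la\cdot,x\ra}_{\restriction V_i}\to\norm{\la\cdot,x\ra}_{\restriction V}$. For the converse $(ii)\Rightarrow(i)$: the hypothesis says $\rho_{(V_i)_1}\to\rho_{(V)_1}$ pointwise on $E$, i.e.\ $\rho((V_i)_1)\to\rho((V)_1)$ in $\R^E$. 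Since $\rho$ restricted to $\convexbalanced((E^*)_1)$ is a homeomorphism onto its image by Proposition \ref{prop: preparation duality}, and all the $(V_i)_1$ and $(V)_1$ lie in this compact space, convergence of the images back along the inverse homeomorphism gives $(V_i)_1\to(V)_1$ in the Vietoris topology.

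There is essentially no serious obstacle here; the content of the proposition is entirely in the earlier structural results (Proposition \ref{prop: preparation duality} in particular), and this statement is a corollary recording what they say for the special case of unit balls of subspaces. The only point deserving a sentence of care is why $\rho_{(V)_1}(x)$ literally equals the norm of the restricted evaluation functional --- this uses that weak-$*$ closed subspaces are in particular norm-closed, so $V$ is a Banach space and the computation of $\norm{\la\cdot,x\ra}_{\restriction V}$ as a supremum over $(V)_1$ is legitimate. I would also remark, as the authors do, that this recovers the equivalence of conditions (ii) and (iii) in \cite[Thm.~2.8]{haagerupEffrosMarechalTopologySpace1998}, now seen as a transparent consequence of the duality picture rather than requiring an independent argument.
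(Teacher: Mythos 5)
Your proof is correct and takes essentially the same route as the paper: both reduce the statement to the identity $\rho_{(V)_1}(x)=\norm{\la\cdot,x\ra}_{\restriction V}$ and then invoke the homeomorphism of Proposition \ref{prop: preparation duality} to translate between Vietoris convergence of unit balls and pointwise convergence of the functions $\rho_{(V)_1}$. The paper's proof is just a terser version of what you wrote.
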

\begin{proof}
    Recall that in Proposition \ref{prop: preparation duality} we showed that the map $\rho_\cdot:F\in\mathcal \convexbalanced((E^*)_1)\to \R^E$ defined by 
    $\rho_F(x)=\sup_{\omega\in F} \abs{\omega(x)}$ is a homeomorphism onto its image. When $F=(V)_1$ for a weak-$*$ closed subspace $V$, we have
    \[
    \rho_{(V)_1}(x)=\sup_{\omega\in (V)_1}\abs{\omega(x)}=
    \sup_{\omega\in (V)_1}\abs{\la\omega,x\ra}=\norm{\la\cdot,x\ra}_{\restriction V},
    \]
    which finishes the proof by Proposition \ref{prop: preparation duality}.
\end{proof}

\subsection{Wijsman topology on grassmanian of Banach spaces}

Given a normed vector space $(E,\norm\cdot)$, denote by $\mathcal G_{\norm\cdot}(E)$ its space of norm-closed subspaces,
which is naturally a subset of $\mathcal F^*(E)$. 
Denoting by $d_{\norm\cdot}$ the natural metric associated to the norm (given by $d_{\norm\cdot}(u,v)=\norm{u-v}$), we endow $\mathcal G_{\norm\cdot}(E)$
 with the topology induced by the Wijsman topology associated to the metric $d_{\norm\cdot}$ on $\mathcal F^*(E)$, as defined in Section \ref{sec: wijsman}.

\begin{prop}\label{prop: subspace is Wijsman closed}
    Let $(E,\norm\cdot)$ be a normed vector space. Then the set  $\mathcal G_{\norm\cdot}(E)$ of all its closed vector subspaces is closed in $\mathcal F^*(E)$ for the Wijsman topology associated to the metric $d_{\norm\cdot}$.
\end{prop}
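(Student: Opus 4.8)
The plan is to mirror the argument used in Proposition \ref{prop: convexbalanced is compact} to show that $\convexbalanced((E^*)_1)$ is closed: I will write $\mathcal G_{\norm\cdot}(E)$ as an intersection of sets of the form $\{V\colon \overline{\lambda V+\mu V}\subseteq V\}$, and show each of these is Wijsman-closed by exhibiting it as the preimage of the (Wijsman-)closed containment relation of Lemma \ref{lem: containement is closed for wijsman} under a suitable continuous map.

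First I would record the elementary characterization: a nonempty norm-closed subset $V\subseteq E$ is a linear subspace if and only if $\overline{\lambda V+\mu V}\subseteq V$ for all scalars $\lambda,\mu$. The forward direction is immediate from linearity and from $V$ being closed. For the converse, taking $\lambda=\mu=0$ yields $0\in V$ (this is where nonemptiness of $V$ is used, so that $0\cdot V=\{0\}$), taking $\lambda=\mu=1$ yields closure under addition, and taking $\mu=0$ with $\lambda$ arbitrary yields closure under scalar multiplication. Hence $\mathcal G_{\norm\cdot}(E)=\bigcap_{\lambda,\mu}\{V\in\mathcal F^*(E)\colon\overline{\lambda V+\mu V}\subseteq V\}$, and it suffices to prove each set in this intersection is Wijsman-closed.

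Now fix $\lambda,\mu$. The map $f_{\lambda,\mu}\colon E\times E\to E$, $(u,v)\mapsto\lambda u+\mu v$, is continuous, so by Lemma \ref{lem: induced continuity for lower} the induced map $(f_{\lambda,\mu})_*\colon\mathcal F(E\times E)\to\mathcal F(E)$, $G\mapsto\overline{f_{\lambda,\mu}(G)}$, is continuous for the lower topologies; restricting it to $\mathcal F(E)\times\mathcal F(E)$ (which is an embedded subspace of $\mathcal F(E\times E)$ for the lower topology, as recalled in Section \ref{sec: lower topo}) sends $(A,B)$ to $\overline{\lambda A+\mu B}$. Since the Wijsman topology refines the lower topology (see Section \ref{sec: wijsman}), precomposing with the diagonal $V\mapsto(V,V)$ shows that $V\mapsto\overline{\lambda V+\mu V}$ is $\wijsmantopo$-to-$\lowertopo$ continuous, and it takes values in $\mathcal F^*(E)$ because $V\neq\emptyset$. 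Consequently $\Phi_{\lambda,\mu}\colon V\mapsto(\overline{\lambda V+\mu V},\,V)$ is continuous from $(\mathcal F^*(E),\wijsmantopo)$ to $(\mathcal F^*(E),\lowertopo)\times(\mathcal F^*(E),\wijsmantopo)$; since $\{(F,G)\colon F\subseteq G\}$ is closed for $\lowertopo\times\wijsmantopo$ by Lemma \ref{lem: containement is closed for wijsman}, the set $\{V\colon\overline{\lambda V+\mu V}\subseteq V\}=\Phi_{\lambda,\mu}\inv(\{(F,G)\colon F\subseteq G\})$ is Wijsman-closed, and intersecting over all $\lambda,\mu$ completes the proof.

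I do not anticipate a serious obstacle; the argument is essentially bookkeeping over the hyperspace lemmas already established. The only points that need a little care are choosing the characterization of subspaces so that it matches the available machinery — packaging $0\in V$, additivity, and homogeneity into the single family of conditions $\overline{\lambda V+\mu V}\subseteq V$ — and checking that $\overline{\lambda V+\mu V}$ is nonempty, so that Lemma \ref{lem: containement is closed for wijsman} (stated for $\mathcal F^*$) applies. A naive direct approach via the subbasic Wijsman (semi-)open sets $\{F\colon d(x,F)<r\}$ and $\{F\colon d(x,F)>r\}$ runs into trouble because the condition ``$u,v\in V$'' becomes ``$d(u,V)=0$ and $d(v,V)=0$'', which is not open, so routing through Lemma \ref{lem: containement is closed for wijsman} is the clean choice.
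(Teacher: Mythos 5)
Your proof is correct and follows essentially the same strategy as the paper's: decompose the subspace condition into containment conditions and verify each is Wijsman-closed by combining Lemma \ref{lem: induced continuity for lower} (lower-continuity of the induced map on hyperspaces) with Lemma \ref{lem: containement is closed for wijsman}. The only difference is cosmetic: the paper treats $0\in V$, scalar stability, and additive stability as three separate closed conditions (the first directly via continuity of $F\mapsto d(0,F)$, the second via the Wijsman homeomorphism $F\mapsto\lambda F$), whereas you unify all three into the single family of conditions $\overline{\lambda V+\mu V}\subseteq V$, which is a clean and valid repackaging.
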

\begin{proof}
    We use a similar approach as for the proof of Proposition \ref{prop: convexbalanced is compact}: we show that the three defining properties of linear subspaces (viz.\ containing zero, stability under multiplication by a non-zero scalar, stability under addition) define closed subsets of $\mathcal F^*(E)$ in the Wijsman topology.
    
    First, a closed subset $F\in\mathcal F^*(E)$ contains zero if and only if 
    $d(0,F)=0$, so by continuity of $F\mapsto d(0,F)$ the space all $F\in\mathcal F^*(E)$ containing zero is closed. 
    
    Next, for every non-zero scalar $\lambda$, the multiplication map $m_\lambda: x\in E\mapsto \lambda E$ scales the metric $d_{\norm\cdot}$ by a factor $\abs\lambda$, so it induces a homeomorphism 
    \[
    \begin{array}{rcc}
         m_{\lambda*}: \mathcal F^*(E)&\to&\mathcal F^*(E)  \\
          F&\mapsto& \lambda F
    \end{array}
    \] 
    for the Wijsman topology. 
    It then follows from Lemma \ref{lem: containement is closed for wijsman} that the space of all $F\in\mathcal F^*(E)$ which are stable under multiplication by a non-zero scalar is closed for the Wijsman topology.
    
    Finally, since addition is $\norm\cdot$-continuous, it induces a continuous map $\mathcal F^*(E\times E)\to\mathcal F^*(E)$ for the lower topology by Lemma \ref{lem: induced continuity for lower}, in particular the map 
    \[
    \begin{array}{rcc}
        \mathcal F^*(E)\times\mathcal F^*(E)&\to&\mathcal F^*(E)\\
         (A,B)&\mapsto&\overline{A+B} 
    \end{array}
    \]
    is $\lowertopo\times\lowertopo$ to $\lowertopo$ continuous.
    Using Lemma \ref{lem: containement is closed for wijsman} one last time, we conclude by continuity that the space of all $F\in\mathcal F^*(E)$ such that $F+F\subseteq F$ is closed for the Wijsman topology, which finishes the proof.
\end{proof}

\begin{rmk}
The addition map $(A,B)\mapsto \overline{A+B}$ is not continuous for the Wijsman topology, 
even in dimension two (take a sequence $(A_n)$ of lines converging non-trivially to another line $B$). 
So in the above proof the lower topology is seemingly unavoidable.
\end{rmk}

\begin{rmk}
Similar arguments show for instance that the space 
of closed $C^*$-subalgebras of a given $C^*$-algebra $A$ is closed
for the Wijsman topology. 
So when $A$ is separable, the Wijsman topology induces a Polish topology on the space of closed subalgebras of $A$ by Theorem \ref{thm: Wijsman Polish}. 
It would be interesting to explore this topology further.
In particular, one could try to compute the exact Borel complexity of
various natural sets of subalgebras in this context, or try to understand the existence of dense orbits for the natural action of the automorphism group/unitary group of $A$ thereon.
\end{rmk}

\subsection{The duality theorem}

It is a well-known consequence of the Hahn-Banach theorem that in any normed space $(E,\norm\cdot)$ the polar map induces a bijection $\mathcal G_{\norm\cdot}(E)\to \mathcal G_{w*}(E^*)$, and here we observe that this map is a homeomorphism. 
Although this result is very natural, we have not been able to find it in the literature.

\begin{thm}\label{thm: duality}
    Let $(E,\norm\cdot)$ be a normed vector space. The natural map
\[
\begin{array}{rcl}
\Phi: \mathcal G_{\norm\cdot}(E)&\to&\mathcal G_{w*}(E^*)\\
 V&\mapsto&V^\perp\coloneqq\{\omega\in E^*\colon\forall x\in V,\, \omega(x)=0\}
\end{array}
\]
is a homeomorphism if we endow $\mathcal G_{\norm\cdot}(E)$ with the Wijsman topology and $\mathcal G_{w*}(E^*)$ with the Vietoris topology.
\end{thm}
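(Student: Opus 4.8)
The plan is to reduce the statement to the duality already established at the level of the distance/gauge functions. The polar map $\Phi$ is a well-known bijection by Hahn-Banach, so the content is purely topological. The key observation is that for a norm-closed subspace $V\subseteq E$, the unit ball of its polar $V^\perp$ is exactly the convex balanced set whose gauge function (in the sense of Equation \eqref{eq: definition of rhoF}) is the distance-to-$V$ function. Concretely, I would first record the identity
\[
\rho_{(V^\perp)_1}(x)=\sup_{\omega\in (V^\perp)_1}\abs{\omega(x)}=d_{\norm\cdot}(x,V)\qquad\text{for all }x\in E,
\]
which is the standard Hahn-Banach duality formula for the distance to a subspace (\cite[Thm.~3.7 and Thm.~3.10]{rudinFunctionalAnalysis2007}): the supremum of $\abs{\omega(x)}$ over norm-one functionals vanishing on $V$ equals the distance from $x$ to $V$. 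This is the single analytic input; everything else is bookkeeping.

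Next I would set up the two ``coordinate'' maps whose composition is (essentially) $\Phi$. On the one hand, Proposition \ref{prop: preparation duality} tells us that $\rho\colon\convexbalanced((E^*)_1)\to\R^E$, $F\mapsto\rho_F$, is a homeomorphism onto its image, and hence so is its restriction to the subspace $\mathcal G_{w*}(E^*)\subseteq\convexbalanced((E^*)_1)$ (viewing a weak-$*$ closed subspace as its unit ball, as in the paper's conventions). So the Vietoris topology on $\mathcal G_{w*}(E^*)$ is precisely the topology of pointwise convergence of the functions $\rho_{(W)_1}$, $x\in E$. On the other hand, by the very definition of the Wijsman topology (Section \ref{sec: wijsman}), the topology on $\mathcal G_{\norm\cdot}(E)\subseteq\mathcal F^*(E)$ is precisely the topology of pointwise convergence of the distance functions $V\mapsto d_{\norm\cdot}(x,V)$, $x\in E$. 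The displayed identity above says that these two families of real-valued functions correspond under $\Phi$: for every $x\in E$, $\rho_{(\Phi(V))_1}(x)=d_{\norm\cdot}(x,V)$. Therefore $\Phi$ is exactly the map that, under the two topological embeddings $V\mapsto(d_{\norm\cdot}(x,V))_{x\in E}$ and $W\mapsto(\rho_{(W)_1}(x))_{x\in E}$ into $\R^E$, becomes the identity on the common image. Since both of these maps are homeomorphisms onto their images (Proposition \ref{prop: preparation duality} for the target, definition of Wijsman for the source), $\Phi$ is a homeomorphism.

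To make this rigorous I would argue: (1) $\Phi$ is a bijection $\mathcal G_{\norm\cdot}(E)\to\mathcal G_{w*}(E^*)$ — this is bipolar/Hahn-Banach and can be cited; (2) the distance identity above holds pointwise; (3) conclude that $\rho\circ\Phi\colon\mathcal G_{\norm\cdot}(E)\to\R^E$ equals $V\mapsto(d_{\norm\cdot}(x,V))_{x\in E}$, which is a homeomorphism onto its image by the definition of the Wijsman topology; (4) since $\rho$ restricted to $\mathcal G_{w*}(E^*)$ is also a homeomorphism onto its image with the \emph{same} image (by surjectivity of $\Phi$ and step (2)), $\Phi=\rho\inv\circ(\rho\circ\Phi)$ is a composition of homeomorphisms, hence a homeomorphism. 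The only genuinely delicate point is step (2): one must be careful that the supremum defining $\rho_{(V^\perp)_1}$ is attained and equals the distance, i.e.\ that the unit ball of $V^\perp$ is ``large enough'' — this is exactly where Hahn-Banach (extension of a norm-one functional from $E/V$) enters, and where one uses that we are working with the \emph{full} dual $E^*$ rather than some proper norming subspace. I do not expect any difficulty with the bijectivity of $\Phi$ itself, nor with the compatibility of the ambient topological embeddings, since the relevant homeomorphism-onto-image statements have already been proved in this section.
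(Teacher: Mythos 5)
Your proposal is correct and follows essentially the same route as the paper: both rest on the identity $d_{\norm\cdot}(x,V)=\norm{x+V}_{E/V}=\sup_{\omega\in(V^\perp)_1}\abs{\omega(x)}=\rho_{(V^\perp)_1}(x)$ obtained from Hahn--Banach, and then conclude by combining Proposition \ref{prop: preparation duality} with the definition of the Wijsman topology as the initial topology for the distance functionals. Your extra bookkeeping in steps (1)--(4) just makes explicit what the paper leaves implicit in its final sentence.
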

\begin{proof}
The Hahn-Banach theorem ensures that $\Phi$ is a bijection whose inverse is the map $W\mapsto {}^\perp W\coloneqq\{x\in E\colon \forall \omega\in W,\, \omega(x)=0\}$.

Recall that by Proposition \ref{prop: preparation duality}, 
the map 
$W\in\mathcal G_{w*}(E^*)\mapsto \rho_W\in\R^E$ is a homeomorphism onto its image, where 
$\rho_W(x)=\sup_{\omega\in(W)_1}\abs{\omega(x)}$.
Now observe that for any $x\in E$ and $V\in \mathcal G_{\norm\cdot}(E)$, we have
\begin{align*}
d_{\norm\cdot}(x,V)&=\norm{ x+V}_{E/V}\\
&=\sup_{\omega\in (E/V)^*_1}\abs{\omega(x+V)}\\
&=\sup_{\omega\in (V^\perp)_1}\abs{\omega(x)}.\\
&=\rho_{V^\perp}(x).
\end{align*}
By the definition of the Wijsman topology and Proposition \ref{prop: preparation duality}, this finishes the proof.
\end{proof}

We can now fix the gap in Maréchal's proof, replacing
her erroneous statement that the grassmanian is compact for the Vietoris topology (see Counterexample \ref{countex: not compact}) by the fact that it is Polish, which
suffices to have the rest of her proof go through.

\begin{cor}\label{cor: weakstar grassmanian is Polish}
    Let $(E,\norm\cdot)$ be a separable Banach space.
    Then the grassmanian $\mathcal G_{w*}(E^*)$ of weak-$*$ closed subspaces of $E^*$ is a Polish space for the
    Vietoris topology. 
\end{cor}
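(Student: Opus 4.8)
The plan is to deduce this immediately from the duality theorem (Theorem \ref{thm: duality}) together with Beer's theorem on the polishness of the Wijsman topology (Theorem \ref{thm: Wijsman Polish}). Since $E$ is a separable Banach space, it is in particular a complete separable metric space for the metric $d_{\norm\cdot}$ induced by the norm. Therefore Theorem \ref{thm: Wijsman Polish} applies and tells us that $\mathcal F^*(E)$ is Polish for the Wijsman topology associated to $d_{\norm\cdot}$.

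Next I would invoke Proposition \ref{prop: subspace is Wijsman closed}, which shows that the grassmanian $\mathcal G_{\norm\cdot}(E)$ of norm-closed subspaces of $E$ is a closed subset of $\mathcal F^*(E)$ for the Wijsman topology. A closed subset of a Polish space is Polish for the induced topology, so $\mathcal G_{\norm\cdot}(E)$ is Polish for the Wijsman topology. Finally, Theorem \ref{thm: duality} provides a homeomorphism $\Phi:\mathcal G_{\norm\cdot}(E)\to\mathcal G_{w*}(E^*)$ when the former carries the Wijsman topology and the latter the Vietoris topology. Since being Polish is a topological invariant, $\mathcal G_{w*}(E^*)$ is Polish for the Vietoris topology, which is exactly the claim.

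There is essentially no obstacle here: all the work has already been done in establishing Theorem \ref{thm: duality}, Proposition \ref{prop: subspace is Wijsman closed}, and in quoting Beer's Theorem \ref{thm: Wijsman Polish}. The only minor point to keep in mind is that $E$ must be assumed complete (a Banach space, not merely a normed space) for Beer's theorem to apply, which is why the hypothesis of the corollary is stated for a separable Banach space rather than a separable normed space; this is consistent with the application to $E = M_*$ the predual of a von Neumann algebra with separable predual.
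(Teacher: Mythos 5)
Your proof is correct and follows exactly the same route as the paper: invoke Theorem \ref{thm: Wijsman Polish} for the polishness of $\mathcal F^*(E)$, Proposition \ref{prop: subspace is Wijsman closed} to see that $\mathcal G_{\norm\cdot}(E)$ is a closed (hence Polish) subspace, and Theorem \ref{thm: duality} to transfer polishness to $\mathcal G_{w*}(E^*)$. Your side remark on why completeness of $E$ is needed is also accurate.
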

\begin{proof}
    By the previous result, $\mathcal G_{w*}(E^*)$ is homeomorphic to $\mathcal G_{\norm\cdot}(E)$ endowed
    with the Wijsman topology.
    But the latter is a Polish space since it is a closed subspace of $\mathcal F^*(E)$ by Proposition \ref{prop: subspace is Wijsman closed} and $\mathcal F^*(E)$ is Polish for the
    Wijsman topology by Theorem~\ref{thm: Wijsman Polish}. 
    Being homeomorphic to a Polish space, $\mathcal G_{w*}(E^*)$ is Polish as wanted.
\end{proof}

\begin{rmk}
This corollary can also be proved directly, by showing that $\mathcal G_{w*}(E^*)$ is $G_\delta$ in the compact Polish space 
$\convexbalanced((E^*)_1)$. To be more precise, one shows that $B\in\convexbalanced((E^*)_1)$ is the unit ball of a weak-$*$ closed subspace of $E^*$ iff for all $s\in(0,1)\cap \Q$ we have $\displaystyle\frac 1s\left(  (E^*)_s\cap B\right)\subseteq B$. It can be shown that the intersection map $(F,G)\mapsto (F\cap G)$ is Baire class $1$, so for a fixed $s\in(0,1)\cap \Q$,  the condition $\displaystyle\frac 1s\left(  (E^*)_s\cap B\right)\subseteq B$ defines a $G_\delta$ set, which is not closed  in general as can be seen through a slight modification of Counterexample \ref{countex: not compact}.
\end{rmk}

\section{Michael's selection theorem}\label{sec: michael}

\subsection{Partitions of unity}\label{sec: Polish is paracompact}

We require a  definition: given an open cover $\mathcal U$ of a topological space $X$,
a \textbf{partition of unity subordinate to $\mathcal U$} is a family of 
functions $(\rho_i)_{i\in I}$ where 
\begin{enumerate}
	\item \label{cond:partunit is continuous}$\rho_i:X\to[0,1]$ is continuous; 
	\item \label{cond:partunit is loc finite}for every $x_0\in X$, 
	there is a neighborhood $V$ of $x_0$ such that there are only finitely many $\rho_i$'s whose 
	support intersect $V$;
	\item \label{cond:partunit sums to one}for all $x\in X$, $\sum_{i\in I}\rho_i(x)=1$;
	\item \label{cond:partunit refines cover}for every $i\in I$, the set $\supp \rho_i$ is contained in some $U\in \mathcal U$. 
\end{enumerate}
The direct proof of the following lemma is inspired by the paper \cite{dydakPartitionsUnity2003}.
\begin{lem}\label{lem:partition unity}
	Let $X$ be a Polish space. For every open cover $\mathcal U$ of $X$, there is a 
	partition of unity subordinated to $\mathcal U$. 
\end{lem}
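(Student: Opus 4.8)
The plan is to reduce the general statement to the key topological facts about Polish spaces: they are metrizable, hence paracompact, and in a metric space one can build a partition of unity explicitly from the metric. So first I would fix a compatible complete (or just compatible) metric $d$ on $X$, and an open cover $\mathcal U=(U_j)_{j\in J}$. The cleanest route, following \cite{dydakPartitionsUnity2003}, is to avoid invoking an abstract paracompactness theorem and instead produce the functions by hand. Since $X$ is second-countable we may first refine $\mathcal U$ to a countable subcover, so assume $\mathcal U=(U_n)_{n\in\N}$; this reindexing is harmless because condition \eqref{cond:partunit refines cover} only asks that each support sit inside some member of the original cover.

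Next I would write down candidate functions. For each $n$ set $g_n(x)=d(x,X\setminus U_n)$ (with the convention $g_n\equiv +\infty$, or rather a large enough truncation, if $U_n=X$; more simply one can use $g_n(x)=\min(1,d(x,X\setminus U_n))$ to keep everything bounded), so that $g_n$ is continuous, vanishes exactly off $U_n$, and $\supp g_n\subseteq \overline{U_n}$ — here one must be a little careful since we want $\supp\rho_n\subseteq U_n$, not merely $\subseteq\overline{U_n}$, so I would instead use $\{g_n>0\}=U_n$ and note that for the final normalized functions the support, being the closure of $\{\rho_n>0\}$, could leak to the boundary. The standard fix is to shrink: using local finiteness one replaces $U_n$ by a slightly smaller open set whose closure is still inside $U_n$, or equivalently one defines $h_n=\max(0,g_n-\sum_{k<n}g_k)$ type "staircase" functions so that the supports become genuinely contained in the $U_n$'s. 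Then I would set
\[
\rho_n(x)=\frac{h_n(x)}{\sum_{k\in\N}h_k(x)},
\]
the denominator being a locally finite (in fact, near each point only finitely many $h_k$ are nonzero, by the staircase construction) sum of continuous functions that is everywhere strictly positive because the $U_n$ cover $X$. This gives \eqref{cond:partunit is continuous} and \eqref{cond:partunit sums to one} immediately, and the staircase/shrinking step gives \eqref{cond:partunit is loc finite} and \eqref{cond:partunit refines cover}.

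The main obstacle — and the only genuinely delicate point — is simultaneously arranging local finiteness of the family $(\supp\rho_n)$ and the containment $\supp\rho_n\subseteq U_n$ with honest closures, rather than with closures that may touch $\partial U_n$; a naive normalization of the $g_n$ fails the support condition. Resolving this is exactly what the Dydak-style argument does: one enumerates the cover, and at stage $n$ one "uses up" only the part of $X$ not already covered by the previous open sets, e.g. working with functions like $f_n=\max\bigl(0,\,g_n-n\sum_{k<n}g_k\bigr)$ or iteratively defining $V_n=\{x: g_n(x)>\tfrac1n\sum_{k>n}$-type thresholds, so that the resulting supports form a locally finite family refining $\mathcal U$. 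Once that combinatorial bookkeeping is set up, continuity of each $\rho_n$ is clear (quotient of continuous functions with nonvanishing, locally finite denominator), \eqref{cond:partunit sums to one} is automatic, and \eqref{cond:partunit refines cover} holds because $\supp\rho_n=\overline{\{f_n>0\}}\subseteq U_n$ by the threshold construction. I would present the precise recursive definition of the $f_n$ carefully and then verify the four conditions in turn, flagging that the second (local finiteness) is where the construction's cleverness is spent.
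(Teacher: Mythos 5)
You have the right skeleton (pass to a countable subcover, build the functions from $d(\cdot,X\setminus U_n)$, normalize), and you correctly isolate the crux: getting local finiteness of the supports together with $\supp\rho_n\subseteq U_n$. But the proposal never actually resolves that crux. You offer several tentative devices --- $h_n=\max\bigl(0,g_n-\sum_{k<n}g_k\bigr)$, or $\max\bigl(0,g_n-n\sum_{k<n}g_k\bigr)$, or an unspecified threshold scheme --- without committing to one or verifying it, and the first candidate does not work as stated: near a point $x_0$ the partial sums $\sum_{k<n}g_k(x)$ are only bounded below by (roughly) $g_{n_0}(x_0)$, while the later $g_n$ can each be as large as $1$ on every neighborhood of $x_0$, so there is no $N$ beyond which $h_n$ vanishes on a fixed neighborhood. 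Pointwise finiteness is not the issue; \emph{local} finiteness is, and the staircase as written does not deliver it. Since condition \eqref{cond:partunit is loc finite} is exactly where you say ``the construction's cleverness is spent,'' leaving it at ``I would present the precise recursive definition carefully'' is a genuine gap rather than a deferred routine verification.

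The paper's fix is worth internalizing because it is short: weight the distance functions geometrically, $g_n(x)=2^{-n}\min\bigl(1,d(x,X\setminus U_n)\bigr)$, so that $\norm{g_n}_\infty\leq 2^{-n}$, set $g=\sup_n g_n$ (continuous, being a supremum of $1$-Lipschitz functions, and everywhere positive since the $U_n$ cover $X$), and put $f_n=\max\bigl(0,g_n-\tfrac{g}{2}\bigr)$. Then on a neighborhood where $g>\tfrac{g(x_0)}{2}$, every $n$ with $2^{-n}<\tfrac{g(x_0)}{4}$ gives $f_n\equiv 0$, which is local finiteness; $\{f_n>0\}\subseteq\{g_n\geq g/2\}\subseteq U_n$ with the middle set closed, which gives the support condition; and $\sum_n f_n$ is continuous and positive by the uniform bound $\norm{f_n}_\infty\leq 2^{-n}$. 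Your argument becomes complete once you replace the vague staircase by this (or an equivalently verified) construction.
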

\begin{proof}
	Since $X$ is Polish, by Lindelöf's lemma we may as well assume $\mathcal U$ is countable,
	and we thus write it as $\mathcal U=(U_n)_{n\in\N}$. Fix a compatible metric $d$ on $X$.
	As a first step, consider the sequence of functions $g_n$ given by
	$g_n(x)=\frac 1{2^n}\min(1,d(x, X\setminus U_n))$. Each $g_n$ is supported on $U_n$. 
	Moreover, each $g_n$ is $1$-Lipschitz, so if we define $g(x)=\sup_n g_n(x)$, the
	function $g$ is $1$-Lipschitz as well, in particular it is continuous. 
	Since $(U_n)$ is a cover of $X$, we have $g(x)>0$ for all $x\in X$. 
	
	For each $n\in \N$, consider the continuous function $f_n$ defined by 
	$$f_n(x)=\max\left(0, g_n(x)-\frac{g(x)}2\right)$$
	and observe that for all $x\in X$ there is $n\in\N$ such that $f_n(x)>0$.
	Consider $f(x)=\sum_{n\in\N} f_n(x)$, then $f$ takes only non-zero positive values and $f$ is continuous
	by absolute convergence (indeed $\norm{f_n}_{\infty}\leq 2^{-n}$ by construction).
	
	We finally let $\rho_n(x)=\frac{f_n(x)}{f(x)}$ and claim that $(\rho_n)$ is the desired partition 
	of unity subordinate to $\mathcal U$. Conditions \eqref{cond:partunit is continuous}, 
	\eqref{cond:partunit sums to one} and \eqref{cond:partunit refines cover} easily follow from the
	construction. Let us check that condition \eqref{cond:partunit is loc finite} also holds: pick $x_0\in X$, 
	since $g$ is continuous we find a neighborhood $V$ of $x_0$ such that $g(x)>\frac{g(x_0)}2$
	for all $x\in V$. Fix $N$ large enough so that $2^{-N}< \frac{g(x_0)}4$. Then for 
	all $x\in V$ and all $n\geq N$ we have $g_n(x)\leq 2^{-N}\leq \frac{g(x)}2$ so $f_n(x)=0$.
	So for all $n\geq N$ the support of $\rho_n$ is disjoint from $V$, and we conclude that
	condition \eqref{cond:partunit is loc finite} holds as well, thus finishing the proof.	
\end{proof}

\subsection{Statement and proof of Michael's selection theorem}

We will now state and prove Michael's selection theorem \cite{michaelContinuousSelections1956} 
in a version which is inspired by 
\cite[Sec.~3.8]{sakaiGeometricAspectsGeneral2013}.
Given a normed vector space $(E,\norm{\cdot})$, recall that we denote by 
$d_{\norm\cdot}$ the associated metric, defined by $d_{\norm\cdot}(v,w)=\norm{v-w}$.

Given $C\subseteq E$ closed, we denote by $\closedconvexstar(C)$ the space of non-empty convex closed subsets of $C$, and it is endowed with the lower topology. Given a topological space $X$, let us say that a continuous map $X\to \closedconvexstar(C)$ is \textit{lower continuous} if it is continuous for the lower topology. 

\begin{thm}[Michael]\label{thm: michael selection}
	Let $(E,\norm\cdot)$ be a normed vector space and let $X$ be a Polish space.
	Let $C$ be a convex subset of $E$ which is $d_{\norm\cdot}$-complete.
	Suppose $F: X\to \closedconvexstar(C)$ is a lower continuous map. 
	Then there is a continuous selection for $F$, namely a continuous map $f: X\to C$ such that for all $x\in X$, $f(x)\in F(x)$. 
\end{thm}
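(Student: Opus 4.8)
The standard strategy is the classical iterative construction of approximate selections that converge uniformly. First I would establish a one-step approximation lemma: given $\eps > 0$ and a lower continuous $F : X \to \closedconvexstar(C)$, there is a continuous map $g : X \to C$ such that $d_{\norm\cdot}(g(x), F(x)) < \eps$ for all $x \in X$. To prove this, for each $x_0 \in X$ pick a point $c_{x_0} \in F(x_0)$; by lower continuity the set $\mathcal O_{x_0} := \{x \in X : F(x) \cap B(c_{x_0}, \eps) \neq \emptyset\}$ is an open neighborhood of $x_0$ (it is the preimage of the subbasic lower-open set $\mathcal I_{B(c_{x_0},\eps)}$). These sets cover $X$, so by Lemma \ref{lem:partition unity} there is a partition of unity $(\rho_i)_{i \in I}$ subordinate to the cover $(\mathcal O_{x_0})_{x_0}$; choosing for each $i$ a center $c_i \in C$ such that $\supp \rho_i \subseteq \mathcal O_{x_i}$ for the corresponding point $x_i$, set $g(x) = \sum_{i \in I} \rho_i(x) c_i$. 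Local finiteness makes this a locally finite sum of continuous functions, hence continuous, and convexity of $C$ guarantees $g(x) \in C$. For a fixed $x$, all the indices $i$ with $\rho_i(x) > 0$ satisfy $x \in \mathcal O_{x_i}$, so $F(x) \cap B(c_i, \eps) \neq \emptyset$; since $F(x)$ is convex, a standard averaging argument (each $c_i$ is within $\eps$ of $F(x)$, and $g(x)$ is a convex combination of such points, which therefore lies within $\eps$ of the convex set $F(x)$) gives $d_{\norm\cdot}(g(x), F(x)) < \eps$.

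Next I would run the iteration. Construct a sequence of continuous maps $f_n : X \to C$ together with auxiliary lower continuous maps $F_n : X \to \closedconvexstar(C)$ as follows. Set $F_0 = F$. Given $F_n$, apply the approximation lemma with $\eps = 2^{-n}$ to get $f_n : X \to C$ with $d_{\norm\cdot}(f_n(x), F_n(x)) < 2^{-n}$; then define
\[
F_{n+1}(x) := \overline{F_n(x) \cap B(f_n(x), 2^{-n})}^{\,}.
\]
Here the key point is that $F_{n+1}(x)$ is nonempty (since $f_n(x)$ is within $2^{-n}$ of $F_n(x)$, so $F_n(x)$ meets the open ball $B(f_n(x), 2^{-n})$), convex, and closed in $C$; and that $x \mapsto F_{n+1}(x)$ is again lower continuous. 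This last fact is exactly where Lemma \ref{lem: continuity of intersection with fattenings} enters: taking $W = \{(x, y) : d_{\norm\cdot}(y, ?) < \dots\}$ — more precisely, I would argue that the map $x \mapsto \overline{B(f_n(x), 2^{-n}) \cap F_n(x)}$ is lower continuous by applying that lemma with the open set $W = \{(x,y) \in X \times E : \norm{y - f_n(x)} < 2^{-n}\} \subseteq X \times E$ (whose slices are the open balls), composed with the already-lower-continuous $F_n$, using that the lower topology behaves well under such fattened intersections. Since $f_{n+1}(x) \in B(f_n(x), 2^{-n})$ modulo the closure — more carefully, $f_{n+1}(x)$ is within $2^{-(n+1)}$ of $F_{n+1}(x) \subseteq \overline{B(f_n(x), 2^{-n})}$ — we get $\norm{f_{n+1}(x) - f_n(x)} < 2^{-n} + 2^{-(n+1)} \leq 2^{-n+1}$, so $(f_n(x))_n$ is uniformly Cauchy.

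Finally, by $d_{\norm\cdot}$-completeness of $C$, the uniform limit $f(x) := \lim_n f_n(x)$ exists, lies in $C$, and is continuous as a uniform limit of continuous maps. To see $f(x) \in F(x)$: the sets $\overline{F_n(x)}$ (closures taken in $C$, or equivalently the $F_n(x)$ themselves) are nested, each contained in the closed ball $\overline{B(f_{n-1}(x), 2^{-(n-1)})}$, and $f(x)$ is the limit of the $f_n(x)$; one checks that $f(x) \in \bigcap_n F_n(x) \subseteq F_0(x) = F(x)$, because for each fixed $n$, the tail $(f_m(x))_{m \geq n}$ eventually lies in arbitrarily small neighborhoods of $F_n(x)$ (indeed $f_m(x) \in F_m(x) \subseteq F_n(x)$-neighborhood for $m \geq n$) and $F_n(x)$ is closed. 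The main obstacle I anticipate is the bookkeeping in verifying that each $F_{n+1}$ is still lower continuous and $\closedconvexstar(C)$-valued — nonemptiness and the lower-continuity of the fattened intersection via Lemma \ref{lem: continuity of intersection with fattenings} require care — together with getting the geometric constants in the approximation and iteration steps to line up so the estimates genuinely telescope; the rest is routine.
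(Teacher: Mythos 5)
Your proposal is correct and follows essentially the same route as the paper: an approximate-selection lemma via a partition of unity subordinate to the cover by sets $\{x : F(x)\cap B(v,\eps)\neq\emptyset\}$, followed by an iteration whose lower continuity at each step is supplied by Lemma \ref{lem: continuity of intersection with fattenings}, and a uniformly Cauchy limit using completeness of $C$. The only (harmless) difference is bookkeeping: you propagate a nested sequence $F_{n+1}(x)=\overline{F_n(x)\cap B(f_n(x),2^{-n})}$ and verify lower continuity inductively, whereas the paper re-intersects the original $F(x)$ with a ball around $f_k(x)$ at each stage, which makes the final inclusion $f(x)\in F(x)$ immediate from $d(f_k(x),F(x))\to 0$.
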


\begin{rmk}
Note that in our statement of Michael's selection theorem we \emph{do not} require the metric $d_{\norm\cdot}$ to be complete on $E$, but only when restricted to $C$. 
This is important since
in both our applications, the norm will actually not be complete on $E$.
\end{rmk}

The proof will use an approximate version of the result which we state and prove first.
\begin{lem}
	Let $(E,\norm\cdot)$ be a normed vector space, let  $C$ be a convex subset of $E$ which is $d_{\norm{\cdot}}$-complete.
	Suppose $F: X\to \closedconvexstar(C)$ is a lower continuous map, and let $\epsilon>0$.
	Then there is a continuous map $f: X\to C$ such that for all $x\in X$, $d_{\norm\cdot}(f(x),F(x))<\epsilon$. 
\end{lem}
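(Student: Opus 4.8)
The plan is to build $f$ by a partition-of-unity gluing argument, the standard first step in Michael's scheme. First I would fix, for each $x\in X$, a point $c_x\in F(x)\subseteq C$. Since $F$ is lower continuous, the set $\mathcal I_{B(c_x,\epsilon)}$ pulls back to an open neighborhood $U_x=\{y\in X: F(y)\cap B(c_x,\epsilon)\neq\emptyset\}$ of $x$ (here one uses Proposition \ref{prop: hyperspace lsc is usc distance function}, or directly the definition of the lower topology with the open ball $B(c_x,\epsilon)$). The family $\mathcal U=(U_x)_{x\in X}$ is then an open cover of $X$.

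Next I would invoke Lemma \ref{lem:partition unity}: since $X$ is Polish, there is a partition of unity $(\rho_i)_{i\in I}$ subordinate to $\mathcal U$, so for each $i\in I$ we may pick $x_i\in X$ with $\supp\rho_i\subseteq U_{x_i}$, and write $c_i\coloneqq c_{x_i}$. Then define
\[
f(x)=\sum_{i\in I}\rho_i(x)\,c_i.
\]
By local finiteness of the partition of unity this is a locally finite sum of continuous $C$-valued functions, hence continuous; and since $\sum_i\rho_i(x)=1$ and each $c_i\in C$ with $C$ convex, $f(x)\in C$ (using that a finite convex combination stays in the convex set $C$ — one should note that only the indices $i$ with $\rho_i(x)>0$ contribute, and these form a finite set). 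It remains to check $d_{\norm\cdot}(f(x),F(x))<\epsilon$ for every $x$.

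For the estimate, fix $x\in X$ and let $I_x=\{i\in I:\rho_i(x)>0\}$, a finite set. For each $i\in I_x$ we have $x\in\supp\rho_i\subseteq U_{x_i}$, so $F(x)\cap B(c_i,\epsilon)\neq\emptyset$, i.e.\ there is $v_i\in F(x)$ with $\norm{c_i-v_i}<\epsilon$. Since $F(x)$ is convex, the point $v\coloneqq\sum_{i\in I_x}\rho_i(x)v_i$ lies in $F(x)$, and
\[
\norm{f(x)-v}=\Bigl\|\sum_{i\in I_x}\rho_i(x)(c_i-v_i)\Bigr\|\leq\sum_{i\in I_x}\rho_i(x)\norm{c_i-v_i}<\sum_{i\in I_x}\rho_i(x)\,\epsilon=\epsilon,
\]
whence $d_{\norm\cdot}(f(x),F(x))\leq\norm{f(x)-v}<\epsilon$. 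This completes the argument.

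I do not anticipate a serious obstacle here: this lemma is precisely the "easy half" of Michael's theorem, and the only points requiring care are (a) extracting the open cover correctly from lower continuity, where one must make sure the ball $B(c_x,\epsilon)$ is used as the open set defining $\mathcal I_{B(c_x,\epsilon)}$, and (b) keeping track that all convex combinations appearing are \emph{finite}, so that convexity of $C$ (which is only assumed, not closed-under-infinite-combinations) and of each $F(x)$ genuinely applies. The genuine difficulty — completeness of $C$ and an iteration to upgrade $\epsilon$-selections to an exact selection — is deferred to the proof of Theorem \ref{thm: michael selection} itself and plays no role in this lemma.
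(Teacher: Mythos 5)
Your proposal is correct and is essentially the paper's own argument: both build the open cover from the sets $\{y : F(y)\cap B(v,\epsilon)\neq\emptyset\}$ (the paper indexes over all $v\in C$, you over chosen points $c_x\in F(x)$, which changes nothing), take a subordinate partition of unity via Lemma \ref{lem:partition unity}, and glue the centers into a convex combination landing in $C$ within $\epsilon$ of $F(x)$. Your explicit construction of the comparison point $v=\sum_i\rho_i(x)v_i\in F(x)$ is a slightly more detailed version of the paper's observation that $f(x)$ lies in the convex set $(F(x)+B(0,\epsilon))\cap C$, and you are right that completeness of $C$ plays no role at this stage.
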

\begin{proof}
    Given $v\in E$ and $R>0$, let $B(v,R)\coloneqq\{w\in E: \norm{v-w}<R\}$ be the open ball of radius $R$ centered at $v$.
	For each $v\in C$, define 
	\[
	U_v\coloneqq\{ x\in X: F(x)\cap B(v,\epsilon)\neq \emptyset\},
	\]
	then by lower continuity and the fact that $\closedconvexstar(C)$ consists of nonempty sets, $(U_v)_{v\in C}$ is an open cover of $X$.	
	Using Lemma \ref{lem:partition unity}, we find a partition
	of unity $(\rho_i)_{i\in I}$ subordinate to $\mathcal U$.
	We then have for all $i\in I$, some $v_i\in C$ such that for all $x\in\supp \rho_i$, one has $F(x)\cap B(v_i,\epsilon)\neq\emptyset$, which is equivalent
	to $d_{\norm\cdot}(v_i,F(x))<\epsilon$.
		
	We claim that the map 
	$f: x\mapsto \sum_{i\in I} \rho_i(x)v_i$ is as desired. Indeed, note that by the partition of unity conditions
	\eqref{cond:partunit is continuous} and \eqref{cond:partunit is loc finite}, the map $f$ is continuous, and that by condition \eqref{cond:partunit sums to one}
	$f(x)$ is a convex combination of finitely many elements of the convex set $(F(x)+B(0,\epsilon))\cap C$, hence it belongs to $C$ and satisfies $d(f(x),F(x))<\epsilon$.
\end{proof}

\begin{proof}[Proof of Michael's selection theorem] 
	We will use the previous lemma and Lemma \ref{lem: continuity of intersection with fattenings} in order 
	to inductively build a sequence of approximate selection which will be Cauchy for the uniform metric
	on the space of continuous maps from $X$ to $C$.
	
	Using the previous lemma, we start with a map $f_1: X\to C$ such that for all $x\in X$,
	we have $d(f_1(x),F(x))<1$. Let us show that we can then inductively build continuous maps $f_k: X\to C$
	such that for all $x\in X$, we have
	$d(f_{k}(x),F(x))<2^{-k-1}$ and $d(f_{k+1}(x),f_k(x))\leq{2^{-k}}$.

	Indeed, assuming $f_k$ has already been built, we have $d(f_k(x),F(x))<2^{-k-1}$ for all $x\in X$. 
	Towards applying Lemma \ref{lem: continuity of intersection with fattenings}, 
	let 
	\[
	W=\{(v_1,v_2)\in C\times C\colon \norm{v_1-v_2}<2^{-k-1}\},
	\]
	observe that $W_v$ is the open ball $B(v,2^{-k-1})$.
	Since $W$ is open, by Lemma \ref{lem: continuity of intersection with fattenings} the
	map $x\mapsto \overline{F(x) \cap W_{f_k(x)}}=\overline{F(x)\cap B(f_k(x),2^{-k-1})}$ is continuous. 
	By the previous lemma, we find a continuous map $f_{k+1}: X\to C$ such that for all $x\in X$, one has
	$d\left(f_{k+1}(x),\overline{F(x)\cap B(f_k(x),2^{-k-1})}\right)<2^{-k-2}$. Then $f_{k+1}(x)$ is as wanted: 
	its distance to $F(x)$ is less than $2^{-k-2}$ and it is $2^{-k-1}+2^{-k-2}<2^{-k}$-close to $f_k(x)$.
	
	Now by construction the sequence $(f_k)$ is uniformly Cauchy and consists of continuous functions
	taking values in the complete metric space $(C,d_{\norm\cdot})$, 
	so it has a continuous limit which
	is the desired map $f$.
\end{proof}

The following upgraded version of the selection theorem will be our main tool for applications. 

\begin{cor}\label{cor: michael dense selection}
	Let $(E,\norm\cdot)$ be a normed vector space and let $X$ be a Polish space. Let $C$ be a convex subspace of $E$ which is $d_{\norm\cdot}$-complete. Suppose additionally that $C$ is separable.
	
	Let $F: X\to \closedconvexstar(C)$ be a lower continuous map. Then there is a sequence of continuous map $f_n: X\to C$ such that for all $x\in X$, 
	$$F(x)=\overline{\{f_n(x): n\in\N\}}.$$
\end{cor}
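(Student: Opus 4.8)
The plan is to fix a compatible metric $d$ on $X$ and a countable dense subset $\{c_m : m \in \N\}$ of $C$, and to build, for each pair $(m,k) \in \N^2$, continuous selections of $F$ that are forced to land within $1/k$ of $c_m$ at every $x$ for which $F(x)$ meets the ball $B(c_m,1/k)$. First I would set $X_{m,k} := \{x \in X : F(x) \cap B(c_m,1/k) \neq \emptyset\}$, which equals $F\inv(\mathcal I_{B(c_m,1/k)})$ and is therefore open, hence Polish. On $X_{m,k}$ the set-valued map $F_{m,k}(x) := \overline{F(x) \cap B(c_m,1/k)}$ takes values in $\closedconvexstar(C)$ — non-empty there by construction, convex as the closure of an intersection of convex sets, closed, and contained in $C$ — and it is lower continuous: this follows from Lemma \ref{lem: continuity of intersection with fattenings}, used with ambient space $C$ and the fixed open set $W = C \times B(c_m,1/k)$ (so that $W_x = B(c_m,1/k)$ for every $x$), composed with $F$. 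Applying Michael's selection theorem (Theorem \ref{thm: michael selection}) on the Polish space $X_{m,k}$ gives a continuous map $g_{m,k} : X_{m,k} \to C$ with $g_{m,k}(x) \in F_{m,k}(x)$, so $g_{m,k}(x) \in F(x)$ and $\norm{g_{m,k}(x)-c_m} \leq 1/k$ for all $x \in X_{m,k}$.

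Were the $g_{m,k}$ defined on all of $X$, we would be essentially done: given $x$, $y \in F(x)$ and $\eps>0$, picking $k$ with $2/k<\eps$ and $m$ with $\norm{y-c_m}<1/k$ forces $x \in X_{m,k}$ and $\norm{g_{m,k}(x)-y}\leq\norm{g_{m,k}(x)-c_m}+\norm{c_m-y}<2/k<\eps$. The \emph{main obstacle} is thus that each $g_{m,k}$ is only defined on the open set $X_{m,k}$, while we need a single countable family of maps on all of $X$, and a continuous selection on an open set need not extend to a continuous selection on the whole space (it may misbehave near the topological boundary). To get around this I would first prove an extension lemma: if $Z \subseteq X$ is closed and $g : Z \to C$ is continuous with $g(z) \in F(z)$ for all $z \in Z$, then $g$ extends to a continuous selection of $F$ on all of $X$. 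This follows from Theorem \ref{thm: michael selection} applied to the set-valued map $F'$ defined by $F'(x) = \{g(x)\}$ for $x \in Z$ and $F'(x) = F(x)$ for $x \notin Z$: this map has values in $\closedconvexstar(C)$ and is lower continuous — the only point needing care is at $x_0 \in Z$, where $g(x_0) \in F(x_0)$ lets one invoke lower continuity of $F$ to handle nearby points outside $Z$ and continuity of $g$ to handle nearby points inside $Z$ — and any selection of $F'$ agrees with $g$ on $Z$ and is a selection of $F$ everywhere.

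To conclude I would write each $X_{m,k}$ as a countable union $\bigcup_{j\geq 1} Z_{m,k,j}$ of closed subsets of $X$ contained in $X_{m,k}$, for instance $Z_{m,k,j} = \{x : d(x, X \setminus X_{m,k}) \geq 1/j\}$, restrict $g_{m,k}$ to each $Z_{m,k,j}$ (on which it is genuinely continuous, being continuous on the larger open set $X_{m,k}$), and apply the extension lemma to obtain continuous maps $f_{m,k,j} : X \to C$ with $f_{m,k,j}(x) \in F(x)$ for all $x$ and $f_{m,k,j} = g_{m,k}$ on $Z_{m,k,j}$. The family $\{f_{m,k,j} : (m,k,j) \in \N^3\}$ is countable; enumerating it as $(f_n)_n$, every $f_n$ is a continuous selection of $F$, so $\overline{\{f_n(x) : n \in \N\}} \subseteq F(x)$ for all $x$. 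For the reverse inclusion, given $x$, $y \in F(x)$ and $\eps > 0$, choose $k$ and $m$ as in the second paragraph; then $x \in X_{m,k} = \bigcup_j Z_{m,k,j}$, so $x \in Z_{m,k,j}$ for some $j$, and $f_{m,k,j}(x) = g_{m,k}(x)$ lies within $2/k < \eps$ of $y$. Hence $F(x) = \overline{\{f_n(x) : n \in \N\}}$ for every $x \in X$.
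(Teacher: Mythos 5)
Your proof is correct, and its skeleton is the same as the paper's: a dense sequence in $C$, the open sets $\{x : F(x)\cap B(c_m,1/k)\neq\emptyset\}$, their decomposition into countably many closed pieces (using that open subsets of a Polish space are $F_\sigma$), and the final triangle-inequality density argument are all identical. Where you diverge is in the mechanics: the paper never produces a partial selection. It directly defines a hybrid multimap equal to $\overline{F(x)\cap B(v_n,1/m)}$ on the closed piece $C_{n,m,p}$ and to $F(x)$ elsewhere, checks its lower continuity (the only delicate point being at the boundary of $C_{n,m,p}$, handled exactly as in your Case 2), and applies Michael's theorem once per triple of indices. You instead first select on the open set $X_{m,k}$ and then invoke an extension lemma for selections defined on closed subsets --- itself proved by applying Michael to a hybrid multimap with singleton values on the closed set. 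Your extension lemma is a clean, reusable statement and its lower-continuity verification is sound, but it makes you apply Michael's theorem twice where once suffices, and the intermediate selection $g_{m,k}$ on the open set is ultimately redundant: restricting the paper's shrunk multimap to the closed piece and hybridizing with $F$ outside it achieves the same effect in one pass. Both arguments are complete; the paper's is slightly more economical, yours isolates a lemma (extendability of selections from closed sets) that may be of independent use.
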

\begin{proof}
	Let $(v_n)$ be dense in $C$ and let $f$ be a continuous selection for $F$ as provided by the previous theorem. 
	For every $n,m\in\N$ consider the open subset  $U_{n,m}\coloneqq\{x\in X: F(x)\cap B(v_n,\frac1{m})\neq \emptyset\}$. Since $X$ is Polish, all its open subsets are $F_\sigma$ and so we may write each $U_{n,m}$ as
	\[
	U_{n,m}=\bigcup_{p\in\N} C_{n,m,p},
	\]
	where each $C_{n,m,p}\subseteq U_{n,m}$ is closed. 
	Now for each $n,m,p$ define $F_{n,m,p}: X \to \closedconvexstar(C)$ by
	\[
	F_{n,m,p}(x)=
	\left\{\begin{array}{cl}F(x) & 
	\text{ if } x\not\in C_{n,m,p}; \\
	\,\,\\
	\overline{F(x)\cap B(v_n,1/m)} & 
	\text{ if }x\in C_{n,m,p}. \end{array}\right.
	\]
	We claim that each $F_{n,m,p}$ is still lower continuous. Indeed, consider a subbasic open set of the form $\mathcal I_O=\{F\in\closedconvexstar(C)\colon F\cap O\neq \emptyset\}$ where $O\subseteq C$ is open. We will show that $F_{n,m,p}\inv(\mathcal I_O)$ is open. 
	
	Let $x_0\in F_{n,m,p}\inv(\mathcal I_O)$. We have two cases to consider.
	\begin{itemize}
		\item If $x_0\notin C_{n,m,p}$ then since $C_{n,m,p}$ is closed, the set $(X\setminus C_{n,m,p})\cap F\inv(\mathcal I_O)$ is an open neighborhood of $x_0$ contained in $F_{n,m,p}\inv(\mathcal I_O)$ since the maps $F$ and $F_{n,m,p}$ coincide outside of $C_{n,m,p}$ and $F$ is lower continuous.
		\item If $x_0\in C_{n,m,p}$ then by definition we have $\overline{F(x_0)\cap B(v_n,1/m)}\cap O\neq \emptyset$ so  $F(x_0)\cap B(v_n,1/m)\cap O\neq \emptyset$. 
		The set $F\inv(\mathcal I_{B(v_n,1/m)\cap O})$ is then a neighborhood of $x_0$ all whose elements $x$ satisfy  $F_{n,m,p}(x)\in\mathcal I_O$.
	\end{itemize}
	So each $F_{n,m,p}$ is indeed lower continuous.
	The conclusion now follows by applying the previous theorem to each map $F_{n,m,p}$.
\end{proof}
\begin{rmk}
Our hypothesis on $X$ in Theorem \ref{thm: michael selection} is not optimal, and could be replaced by asking that $X$ is paracompact. Indeed Section \ref{sec: Polish is paracompact} shows that every Polish space is paracompact, which is the only property that is used in the proof. 
Similarly, Corollary \ref{cor: michael dense selection} holds as long as $X$ is paracompact and perfectly normal, and we refer the interested reader to \cite{sakaiGeometricAspectsGeneral2013} for details on these topological properties.
\end{rmk}
\subsection{Two examples of norms}\label{sec: ex of norms}
We now build the two norms that will allow us to use Michael's selection 
theorem in the framework of von Neumannn subalgebras: 
the first is for the ultraweak topology and will be useful towards proving
Maréchal's theorem,
and the second is for the strong-$*$ topology and will allow us to easily 
deduce the strong-$*$ selection theorem of Haagerup and Winsløw.

\begin{lem}\label{lem: norm for weak}
Let $\mathcal H$ be a separable Hilbert space. There is a norm on $\bh$ whose restriction to $\bh_1$ induces a complete metric which is compatible with the ultraweak topology.
\end{lem}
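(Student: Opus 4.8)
The plan is to construct the norm explicitly from a countable family of trace-class operators that weak-$*$ separates points, and then check that the unit ball is weak-$*$ closed and metrizable so that completeness follows from weak-$*$ compactness via Banach--Alaoglu.

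First I would fix an orthonormal basis $(e_n)_{n\in\N}$ of $\mathcal H$ and let $(T_k)_{k\in\N}$ be an enumeration of the rank-one operators $\xi\mapsto\la\xi,e_j\ra e_i$ for $i,j\in\N$ (these are trace-class), rescaled so that the family $(T_k)_k$ lies in the unit ball of the predual $\LL^1(\mathcal H)$. Since these operators span a weak-$*$ dense subspace of the predual, they separate points of $\bh$ and the seminorms $x\mapsto\abs{\tr(T_k x)}$ generate the ultraweak topology on bounded sets. I would then set
\[
\nm{x}\coloneqq \nm{x}_{\bh} + \sum_{k\in\N}\frac{1}{2^k}\abs{\tr(T_k x)}.
\]
This is clearly a norm on $\bh$ (the operator-norm term alone already makes it a norm; the series is finite since $\abs{\tr(T_k x)}\leq\nm{T_k}_1\nm{x}_{\bh}\leq\nm{x}_{\bh}$). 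The metric $d(x,y)=\nm{x-y}$ restricted to $\bh_1$ should be shown compatible with the ultraweak topology there: on $\bh_1$ the operator-norm term is bounded, but the point is that a net $(x_i)$ in $\bh_1$ converges ultraweakly to $x\in\bh_1$ iff $\tr(T_k x_i)\to\tr(T_k x)$ for all $k$ — wait, this is where care is needed.

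Actually the operator-norm summand does \emph{not} induce the ultraweak topology, so I should instead take
\[
\nm{x}\coloneqq \sum_{k\in\N}\frac{1}{2^k}\abs{\tr(T_k x)}
\]
after checking this really is a norm, i.e.\ that $\tr(T_k x)=0$ for all $k$ forces $x=0$ — which holds because the $T_k$ span a weak-$*$ dense, hence norm-dense-in-predual, subspace and the predual separates points of $\bh$. The restriction of $d$ to $\bh_1$ is then compatible with the ultraweak topology: it is visibly continuous for the ultraweak (equivalently weak) topology on $\bh_1$, and since the $(\tr(T_k\cdot))_k$ generate the ultraweak topology on bounded sets (as the $T_k$ are dense in the unit ball of the predual and every element of $\bh_1$ is bounded), the identity map from $(\bh_1,\text{uw})$ to $(\bh_1,d)$ is continuous; being a continuous bijection from a compact space (Banach--Alaoglu) to a Hausdorff space, it is a homeomorphism. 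In particular $(\bh_1,d)$ is compact, hence $d$-complete on $\bh_1$. One should note $\bh_1$ is precisely the closed $\nm\cdot$-ball of some radius? No — it is not a metric ball, but Michael's theorem as stated only needs a convex $d_{\nm\cdot}$-complete set $C$, and here one can either take $C=\bh_1$ directly, or observe that the closed unit ball of $\nm\cdot$ is weak-$*$ closed (it is an intersection of weak-$*$ closed half-spaces... actually the series makes this subtler) — so the cleanest route is to record that $\nm\cdot$ restricted to $\bh_1$ is complete and compatible with uw, which is exactly what the statement asks.

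The main obstacle is verifying that $d$ is compatible with the ultraweak topology on $\bh_1$ rather than a strictly finer or coarser topology: the easy direction is continuity of $x\mapsto\tr(T_k x)$ for weak convergence, and the subtle point is that uniform convergence of the tail $\sum_k 2^{-k}\abs{\tr(T_k(x_i-x))}$ is controlled by boundedness of $(x_i)$ in operator norm together with $\nm{T_k}_1\leq 1$, so that pointwise (in $k$) convergence upgrades to $d$-convergence by a standard $\eps/2$ split of the series. Completeness is then \emph{not} proved by a Cauchy-sequence argument at all but simply inherited from weak-$*$ compactness of $\bh_1$ (Banach--Alaoglu) once compatibility is established, which is the conceptually clean way to close the argument.
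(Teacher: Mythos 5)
Your proof is correct and follows essentially the same route as the paper's: the paper's norm is $\rho(x)=\sum_m 2^{-m}\abs{\la x\xi_{k_m},\xi_{l_m}\ra}$ for a dense sequence $(\xi_m)$ in the unit ball of $\mathcal H$, which is exactly your $\sum_k 2^{-k}\abs{\tr(T_k x)}$ written out in matrix coefficients, and both arguments obtain completeness on $\bh_1$ from weak compactness rather than from a Cauchy-sequence computation (your compact-to-Hausdorff shortcut for the reverse continuity is a mild streamlining of the paper's direct two-sided check). The only blemish is the parenthetical claim that the $T_k$ are ``dense in the unit ball of the predual'' --- they are not, but their linear span is norm-dense in the trace class, which together with uniform boundedness on $\bh_1$ is all that is needed both for $\rho$ to be a norm and for the seminorms $\abs{\tr(T_k\,\cdot\,)}$ to detect the ultraweak topology on bounded sets.
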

\begin{proof}
    Recall that on $\BH_1$, the weak and the ultraweak topology coincide (see e.g.\ \cite[Prop.~4.6.14]{pedersenAnalysisNow1989}). Moreover $\BH_1$ is ultraweakly compact by the Banach-Alaoglu theorem, so it suffices to find a norm $\rho$ which induces the weak topology on $\BH_1$ (indeed the associated metric $d_{\rho}$ will automatically be complete on $\BH_1$ by compactness).

    Let us fix a sequence $(\xi_m)$  dense in the unit ball of $\mathcal H$. 
    Let $m\mapsto (k_m,l_m)$ be a surjection $\N\to\N\times\N$. Then consider the semi-norm
	$$\rho(x)=\sum_{m\in\N} \frac 1{2^m}\abs{\la x\xi_{k_m},\xi_{l_m}\ra}, \quad x\in \BH.$$
	Observe that if $(x_i)$ is a uniformly bounded net converging to zero weakly, then 
	$\rho(x_i)\to 0$, in particular $\rho$ is refined by the weak topology on the unit ball.
	
	Let us show that $\rho$ does induce the weak topology on the unit ball, which will imply in particular that it is a norm. Let $x\in\BH_1$ and assume that we have a sequence $(x_n)$ in the unit ball such that $\rho(x_n-x)\to 0$. Let $\xi,\eta\in \mathcal H$, without loss of generality be both non-zero and we thus consider the associated unit vectors $\xi'\coloneqq \frac 1{\norm\xi}\xi$ and $\eta'\coloneqq \frac 1{\norm\eta}\eta$. For any $k, l,n \in\N$ we have
    \[
    \la (x_n-x)\xi',\eta'\ra=\la (x_n-x)\xi_k,\xi_l\ra+\la (x_n-x)(\xi'-\xi_k),\xi_l\ra+ \la (x_n-x)\xi',\eta'-\xi_l\ra.
    \]
	Since $\rho(x_n-x)\to 0$ the first term tends to zero, while the two other terms can be made arbitrarily small by the Cauchy-Schwartz inequality, the density of $(\xi_k)$, and the fact that $\norm{x_n-x}\leq 2$. 
	So we do have $\la (x_n-x)\xi',\eta'\ra\to 0$, and
	multiplying by $\norm{\xi}\norm\eta$ we obtain $\la (x_n-x)\xi,\eta\ra\to 0$ as wanted. This finishes the proof of the lemma.
\end{proof}

\begin{lem}\label{lem: norm for strong star}
    Let $\mathcal H$ be a separable Hilbert space.
    There is a norm on $\bh$ whose restriction to $\bh_1$ induces a complete metric which is compatible with the strong-* topology.
\end{lem}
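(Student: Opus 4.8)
The plan is to mimic the proof of Lemma \ref{lem: norm for weak}, adapting it to the strong-$*$ topology. The main difficulty compared to the weak case is that $\bh_1$ is \emph{not} compact for the strong-$*$ topology, so completeness of the metric is not automatic and must be verified by hand.

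First I would fix a sequence $(\xi_m)$ dense in the unit ball of $\mathcal H$ and define a norm directly by
\[
\rho(x)=\sum_{m\in\N}\frac{1}{2^m}\bigl(\norm{x\xi_m}+\norm{x^*\xi_m}\bigr),\qquad x\in\bh.
\]
It is immediate that $\rho$ is a norm (if $\rho(x)=0$ then $x\xi_m=0$ for all $m$, hence $x=0$ by density of $(\xi_m)$ and boundedness of $x$), and that $\rho(x)\leq 4\norm x$. Next I would check that on $\bh_1$ the metric $d_\rho$ induces exactly the strong-$*$ topology. One inclusion is easy: if $x_i\to x$ strong-$*$ with $\norm{x_i},\norm x\leq 1$, then each summand $\norm{(x_i-x)\xi_m}+\norm{(x_i^*-x^*)\xi_m}\to 0$ and is bounded by $4$, so by dominated convergence $\rho(x_i-x)\to 0$. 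For the converse, suppose $x_n,x\in\bh_1$ with $\rho(x_n-x)\to 0$; then $\norm{(x_n-x)\xi_m}\to 0$ and $\norm{(x_n^*-x^*)\xi_m}\to 0$ for every $m$, and for an arbitrary unit vector $\xi$ one approximates $\xi$ by some $\xi_m$ and uses $\norm{(x_n-x)\xi}\leq\norm{(x_n-x)\xi_m}+\norm{(x_n-x)(\xi-\xi_m)}\leq\norm{(x_n-x)\xi_m}+2\norm{\xi-\xi_m}$, and similarly for the adjoints, to conclude strong-$*$ convergence. (As in the previous lemma one should be slightly careful that strong-$*$ convergence is metrizable on $\bh_1$ so that working with sequences suffices; this follows from separability of $\mathcal H$.)

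The main obstacle is completeness of $d_\rho$ on $\bh_1$, which I expect to be the crux since, unlike in Lemma \ref{lem: norm for weak}, there is no compactness to fall back on. Here I would take a $d_\rho$-Cauchy sequence $(x_n)$ in $\bh_1$. Then for each $m$, both $(x_n\xi_m)$ and $(x_n^*\xi_m)$ are Cauchy in $\mathcal H$, hence convergent; by the density of $(\xi_m)$ and the uniform bound $\norm{x_n}\leq 1$, a standard $3\epsilon$-argument shows that $(x_n\xi)$ converges for \emph{every} $\xi\in\mathcal H$, to a limit I call $T\xi$; linearity and $\norm{T\xi}\leq\norm\xi$ are clear, so $T\in\bh_1$. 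Likewise $(x_n^*\xi)$ converges for every $\xi$ to some $S\xi$ with $S\in\bh_1$, and from $\la x_n\xi,\eta\ra=\la\xi,x_n^*\eta\ra$ one gets $\la T\xi,\eta\ra=\la\xi,S\eta\ra$, i.e.\ $S=T^*$. Thus $x_n\to T$ strong-$*$ on $\bh_1$, and by the first inclusion above this means $\rho(x_n-T)\to 0$, so $d_\rho$ is complete on $\bh_1$. This completes the proof.

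\begin{proof}
    Fix a sequence $(\xi_m)_{m\in\N}$ dense in the unit ball of $\mathcal H$, and define
    \[
    \rho(x)=\sum_{m\in\N}\frac{1}{2^m}\bigl(\norm{x\xi_m}+\norm{x^*\xi_m}\bigr),\qquad x\in\bh.
    \]
    Since $\norm{x\xi_m}+\norm{x^*\xi_m}\leq 2\norm x$ for all $m$, the series converges and $\rho(x)\leq 4\norm x$; it is clear that $\rho$ is a semi-norm, and if $\rho(x)=0$ then $x\xi_m=0$ for all $m$, so $x=0$ by density of $(\xi_m)$ and boundedness of $x$. Hence $\rho$ is a norm. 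Let $d_\rho$ be the associated metric.

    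We first show that $d_\rho$ induces the strong-$*$ topology on $\bh_1$. Since $\mathcal H$ is separable, the strong-$*$ topology is metrizable on $\bh_1$, so it suffices to argue with sequences. If $x_n\to x$ strong-$*$ with $x_n,x\in\bh_1$, then for every $m$ we have $\norm{(x_n-x)\xi_m}+\norm{(x_n^*-x^*)\xi_m}\to 0$, and these terms are bounded by $4$, so $\rho(x_n-x)\to 0$ by dominated convergence. Conversely, suppose $x_n,x\in\bh_1$ with $\rho(x_n-x)\to 0$; then $\norm{(x_n-x)\xi_m}\to 0$ and $\norm{(x_n^*-x^*)\xi_m}\to 0$ for every $m$. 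Given a unit vector $\xi\in\mathcal H$ and $\epsilon>0$, pick $m$ with $\norm{\xi-\xi_m}<\epsilon$; then
    \[
    \norm{(x_n-x)\xi}\leq\norm{(x_n-x)\xi_m}+\norm{(x_n-x)(\xi-\xi_m)}\leq\norm{(x_n-x)\xi_m}+2\epsilon,
    \]
    so $\limsup_n\norm{(x_n-x)\xi}\leq 2\epsilon$; as $\epsilon>0$ was arbitrary, $\norm{(x_n-x)\xi}\to 0$, and the same argument applied to the adjoints gives $\norm{(x_n^*-x^*)\xi}\to 0$. Hence $x_n\to x$ strong-$*$.

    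It remains to show that $d_\rho$ is complete on $\bh_1$. Let $(x_n)$ be $d_\rho$-Cauchy in $\bh_1$. For each $m$, both $(x_n\xi_m)_n$ and $(x_n^*\xi_m)_n$ are Cauchy in $\mathcal H$, hence convergent. Given an arbitrary $\xi\in\mathcal H$ with $\norm\xi\leq 1$ and $\epsilon>0$, choose $m$ with $\norm{\xi-\xi_m}<\epsilon$ and $N$ with $\norm{(x_n-x_{n'})\xi_m}<\epsilon$ for $n,n'\geq N$; then for $n,n'\geq N$,
    \[
    \norm{(x_n-x_{n'})\xi}\leq\norm{(x_n-x_{n'})\xi_m}+\norm{(x_n-x_{n'})(\xi-\xi_m)}<\epsilon+2\epsilon=3\epsilon,
    \]
    so $(x_n\xi)_n$ is Cauchy, hence convergent; call its limit $T\xi$. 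By linearity of each $x_n$, $T$ is linear, and $\norm{T\xi}=\lim_n\norm{x_n\xi}\leq\norm\xi$, so $T\in\bh_1$. Similarly $(x_n^*\xi)_n$ converges for every $\xi$ to some $S\xi$ with $S\in\bh_1$. For all $\xi,\eta\in\mathcal H$ we have $\la x_n\xi,\eta\ra=\la\xi,x_n^*\eta\ra$; passing to the limit yields $\la T\xi,\eta\ra=\la\xi,S\eta\ra$, so $S=T^*$. Therefore $x_n\xi\to T\xi$ and $x_n^*\xi\to T^*\xi$ for all $\xi\in\mathcal H$, i.e.\ $x_n\to T$ strong-$*$, and since $T\in\bh_1$ the first part of the proof gives $\rho(x_n-T)\to 0$. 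Thus $d_\rho$ is complete on $\bh_1$, which finishes the proof.
\end{proof}
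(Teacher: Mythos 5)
Your proof is correct and uses the same norm $\rho$ and the same density argument to show that $d_\rho$ induces the strong-$*$ topology on $\bh_1$ as the paper does. The only place where you diverge is the completeness step: the paper produces the limit operator by invoking weak compactness of $\bh_1$ (Banach--Alaoglu) together with weak continuity of the adjoint, and then matches the weak limit against the norm limits of $(x_n\xi_m)$ and $(x_n^*\xi_m)$; you instead build the limit by hand, showing via a $3\epsilon$-argument that $(x_n\xi)$ and $(x_n^*\xi)$ are Cauchy for \emph{every} $\xi$, defining $T$ and $S$ pointwise, and verifying $S=T^*$ from $\la x_n\xi,\eta\ra=\la\xi,x_n^*\eta\ra$. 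Your route is slightly longer but more elementary, avoiding any appeal to compactness; both are perfectly valid.
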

\begin{proof}
    Let us again fix a sequence $(\xi_m)$ dense in the unit ball of $\mathcal H$.
    Define the semi-norm
	$$\rho(x)=\sum_{m\in\N} \frac 1{2^m}\left(\norm{x\xi_{m}}+\norm{x^*\xi_{m}}\right),\quad x\in \BH.$$
	Again $\rho$ is clearly refined by the strong-* topology on $\BH_1$. Moreover if $(x_n)$ is a sequence in $(\BH)_1$ and if $\rho(x_n)\to 0$, then  for all $m$ we have both $\norm{x_n \xi_m}\to 0$ and $\norm{x_n^*\xi_m}\to 0$. We first have to show that $\rho$ induces the strong-* topology on $\BH_1$, which will in particular imply it is a norm.
	
    Let $x\in\BH_1$ and assume that we have a sequence $(x_n)$ in the unit ball such that $\rho(x_n-x)\to 0$.
	As before we take $\xi\in \mathcal H$ nonzero and let $\xi'=\frac 1{\norm\xi}\xi$. 
	By the triangle inequality we have for all $n,m\in\N$
	\begin{align*}
	    \norm{(x_n-x)\xi'}&\leq \norm{(x_n-x)\xi_m}+\norm{(x_n-x)(\xi'-\xi_m)}\\
	    &\leq \norm{(x_n-x)\xi_m}+2\norm{\xi'-\xi_m}.
	\end{align*}
	The second term can be made arbitrarily small by density, while when $m$ is fixed the first term tends to zero because $\rho(x_n-x)\to 0$; so we conclude that $\norm{(x_n-x)\xi'}\to 0$, and hence $\norm{(x_n-x)\xi}\to 0$ as wanted.
	Replacing $x_n$ by $x_n^*$ and $x$ by $x^*$ in the above argument, we obtain the other desired convergence $\norm{(x_n^*-x^*)\xi}\to 0$, which finishes the proof that $\rho$ induces the strong-$*$ topology on $\BH_1$.

	We finally show that the metric $d_\rho$ given by $\rho$ is complete on $\BH_1$.
	Let $(x_n)$ be a $d_\rho$-Cauchy sequence. Then for all $m\in\N$, the sequences $(x_n\xi_m)$ and $(x_n^*\xi_m)$ are Cauchy in $\h$ and hence they are norm convergent.
	
	By weak compactness of $\BH_1$ and weak continuity of the adjoint, there is a subsequence $(n_k)$ and $x\in\BH_1$ such that for all $\xi\in\H$,  $x_{n_k}\xi\rightarrow x\xi$ and $x_{n_k}^*\xi\rightarrow x^*\xi$ weakly in $\h$ as $k\rightarrow\infty$. 
	Therefore, the limit in norm of $(x_n\xi_m)$ and $(x_n^*\xi_m)$ respectively must agree with $x\xi_m$ and $x^*\xi_m$ for all $m\in\N$. Since $(x_n)$ is bounded a density argument shows that $x_n\rightarrow x$ and $x_n^*\rightarrow x^*$ strongly. 
\end{proof}
\section{Applications}\label{sec: applications}

\subsection{Proof of Maréchal's theorem}

We begin by reproving Maréchal's selection result for the weak (equivalently, ultraweak) topology.

\begin{prop}[{\cite[Prop.~1]{marechalTopologieStructureBorelienne1973}}]
\label{prop:weakly continous choice in the unit ball}
	Let $M$ be a  von Neumann algebra with separable predual. Let $\mathcal G_{w*}(M)$ denote its space of ultraweakly closed subspaces, endowed with the Vietoris topology, and endow the unit ball $(M)_1$ with the weak topology.
	There is a sequence of continuous maps $x_n:\mathcal G_{w*}(M)\to (M)_1$ such that 
    for all $V\in\mathcal G_{w*}(M)$, the sequence $(x_n(V))_{n\in\N}$ is dense in $(V)_1$ for the weak topology. 
\end{prop}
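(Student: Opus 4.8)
The plan is to obtain the maps $x_n$ from the dense version of Michael's selection theorem, Corollary \ref{cor: michael dense selection}. We take as parameter space $X=\mathcal G_{w*}(M)$, which is Polish by Corollary \ref{cor: weakstar grassmanian is Polish} applied to the separable Banach space $M_*$ --- recall that the ultraweak topology on $M$ is exactly the weak-$*$ topology for the duality $M=(M_*)^*$, so that ``ultraweakly closed'' and ``weak-$*$ closed'' subspaces coincide. For the ambient normed space, since $M_*$ is separable one constructs, just as in Lemma \ref{lem: norm for weak} (using a sequence dense in the unit ball of $M_*$ in place of the vector functionals, or by restricting the norm of Lemma \ref{lem: norm for weak} through a faithful normal representation of $M$ on a separable Hilbert space), a norm $\rho$ on $M$ whose restriction to $(M)_1$ induces a metric $d_\rho$ compatible with the ultraweak topology; this metric is automatically complete on $(M)_1$ because $(M)_1$ is ultraweakly compact by the Banach-Alaoglu theorem. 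We then let $E=M$ carry the norm $\rho$ and set $C=(M)_1$, which is a convex, $d_\rho$-complete, and separable (being compact metric) subset of $E$.

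Next we define the multifunction $F\colon\mathcal G_{w*}(M)\to\closedconvexstar(C)$ by $F(V)=(V)_1=V\cap(M)_1$. For every ultraweakly closed subspace $V$ the set $(V)_1$ is non-empty (it contains $0$), convex, and ultraweakly closed, hence $d_\rho$-closed in $C$; thus $F$ indeed takes values in $\closedconvexstar(C)$. The one point to verify is that $F$ is lower continuous, i.e.\ continuous from $(\mathcal G_{w*}(M),\vietoristopo)$ to $(\closedconvexstar(C),\lowertopo)$. This is almost tautological: under the identification of a subspace with its unit ball, $\mathcal G_{w*}(M)$ is by definition a topological subspace of $\mathcal F((M)_1)$ with the Vietoris topology, $F$ is this inclusion together with the remark that its values are convex, and the Vietoris topology --- being the join of the lower topology and the upper Vietoris topology --- refines the lower topology. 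Explicitly, for every ultraweakly open $U\subseteq(M)_1$ one has $F\inv(\mathcal I_U)=\{V\in\mathcal G_{w*}(M)\colon(V)_1\cap U\neq\emptyset\}$, which is the trace on $\mathcal G_{w*}(M)$ of the Vietoris-open set $\mathcal I_U$, hence open.

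Corollary \ref{cor: michael dense selection} then yields continuous maps $x_n\colon\mathcal G_{w*}(M)\to C=(M)_1$ such that $(V)_1=\overline{\{x_n(V)\colon n\in\N\}}$ for every $V\in\mathcal G_{w*}(M)$, the closure being taken for $d_\rho$, that is, for the ultraweak topology. Since the weak and ultraweak topologies coincide on the bounded set $(M)_1$, each $x_n$ is continuous into $(M)_1$ equipped with the weak topology and $\{x_n(V)\colon n\in\N\}$ is weakly dense in $(V)_1$ for every $V$, which is exactly the assertion. I do not expect a genuine obstacle here: the real work is carried by the machinery already in place (Corollary \ref{cor: michael dense selection}, Lemma \ref{lem: norm for weak}, Corollary \ref{cor: weakstar grassmanian is Polish}), and this proposition is a direct application of it. The only two points deserving attention are that $d_\rho$ is complete on $(M)_1$ despite $\rho$ not being a complete norm on $M$ --- which is precisely why Theorem \ref{thm: michael selection} only demands completeness of $C$ --- and the lower continuity of $V\mapsto(V)_1$, a formality once the topologies are properly set up, valid simply because the Vietoris topology on the grassmanian refines the lower topology.
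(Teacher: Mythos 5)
Your proof is correct and follows exactly the paper's own route: Polishness of $\mathcal G_{w*}(M)$ from Corollary \ref{cor: weakstar grassmanian is Polish}, a norm on $M$ as in Lemma \ref{lem: norm for weak} whose restriction to $(M)_1$ is a complete metric for the ultraweak topology, and then the dense version of Michael's selection theorem (Corollary \ref{cor: michael dense selection}) applied to $V\mapsto (V)_1$. You simply spell out the details (lower continuity of $V\mapsto(V)_1$, the adaptation of Lemma \ref{lem: norm for weak} from $\BH$ to $M$) that the paper leaves implicit.
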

\begin{proof}
    Recall from Corollary \ref{cor: weakstar grassmanian is Polish} that $\mathcal G_{w*}(M)$ is a Polish space for the Vietoris topology, which refines the lower topology. Since Lemma \ref{lem: norm for weak} provides us a norm on $M$ which when restricted to $(M)_1$ yields a complete metric inducing the ultraweak topology, the conclusion follows directly from the dense version of Michael's selection theorem (Corollary \ref{cor: michael dense selection}).
\end{proof}

\begin{rmk}
Maréchal's result is actually more precise since she shows that the sequence of maps $(x_n)$ is \emph{equicontinuous}.
This allows her to completely recast continuity in terms of selections (see \cite[Prop.~3]{marechalTopologieStructureBorelienne1973}). 
It would be interesting to see if a version of Michael's selection theorem can encompass this statement, possibly by requiring the source space $X$ to be compact.
\end{rmk}

We can now finish the proof exactly as Maréchal did in \cite[Cor.~2]{marechalTopologieStructureBorelienne1973}.
Given a von Neumann algebra $M$, we denote by $\mathcal S(M)$ its space
of von Neumann subalgebras (with the same unit as $M$). 
Since such subalgebras are ultraweakly closed,
we have $\mathcal S(M)\subseteq \mathcal G_{w*}(M)$ and we thus endow it with the Vietoris topology.

\begin{thm}\label{thm:polish_vN}
	Let $M$ be a von Neumann algebra with separable predual. 
	The space $\mathcal S(M)$ of von Neumann subalgebras of $M$ is $G_\delta$ inside the Polish space $\mathcal G_{w*}(M)$. In particular, it is a Polish space. 
\end{thm}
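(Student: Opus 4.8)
The plan is to show that the three conditions distinguishing a von Neumann subalgebra among the weak-$*$ closed subspaces of $M$ — namely (a) being closed under multiplication, (b) being closed under the adjoint operation, and (c) containing the unit of $M$ — each cut out a $G_\delta$ subset of $\mathcal G_{w*}(M)$ (the last one will in fact be closed). Since a countable intersection of $G_\delta$ sets is $G_\delta$, and since a $G_\delta$ subset of a Polish space is Polish by the characterization recalled in Section \ref{sec:prelim}, this will give the result. Throughout we identify a subspace $V\in\mathcal G_{w*}(M)$ with its weak-$*$ compact unit ball $(V)_1\in\convexbalanced((M)_1)$, and we recall that on $(M)_1$ the weak and ultraweak topologies agree, that $(M)_1$ is weakly compact Polish, and that the Vietoris topology on $\mathcal F((M)_1)$ is then compact Polish.

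First I would handle the unit: the map $V\mapsto (V)_1$ is a topological embedding into $\mathcal F((M)_1)$, and by Lemma \ref{lem: belonging is closed} the set $\{F\in\mathcal F((M)_1):1\in F\}$ is closed for the Vietoris topology; intersecting with $\mathcal G_{w*}(M)$ shows the subspaces containing $1$ form a closed, hence $G_\delta$, subset. Next, for the adjoint: the adjoint map is a weak homeomorphism of $(M)_1$, so it induces a Vietoris homeomorphism $F\mapsto F^*$ of $\mathcal F((M)_1)$, and by Lemma \ref{lem: closed inclusion vietoris} the set $\{F:F^*\subseteq F\}$ is Vietoris-closed; so self-adjointness is also a closed condition. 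The real work is multiplicative closure. Here I would use Proposition \ref{prop: mult is weakly baire class 1}: on $(M)_1$ the multiplication map $(x,y)\mapsto xy$ is Baire class $1$ for the weak topology. The natural idea is to say that $F\in\mathcal F((M)_1)$ is the unit ball of a subalgebra iff for every rational $s,t\in(0,1)$ one has $st\cdot\big((sF)\cdot(tF)\big)^{=}\subseteq F$ after rescaling — more carefully, iff for all rationals $0<s<1$, $\overline{(sF)(sF)}\subseteq F$ (the rescaling being needed because the product of two norm-$\leq 1$ elements need not lie in the unit ball), and then intersect over $s$. For a fixed $s$, the map $F\mapsto \overline{(sF)(sF)}$ should be Baire class $1$ from $\mathcal F((M)_1)$ (Vietoris) to $\mathcal F((M)_1)$ (Vietoris): it is obtained by pushing forward the closed set $(sF)\times(sF)\subseteq (M)_1\times (M)_1$ under the Baire class $1$ multiplication map, and one checks — as in the unstated lemmas and the remark after Corollary \ref{cor: weakstar grassmanian is Polish} concerning Baire class $1$ intersection maps — that such push-forwards preserve Baire class $1$. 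Combined with the fact that $\{(F,G):F\subseteq G\}$ is Vietoris-closed (Lemma \ref{lem: closed inclusion vietoris}), the preimage of this closed set under a Baire class $1$ map is $G_\delta$, so for each fixed $s$ the condition $\overline{(sF)(sF)}\subseteq F$ defines a $G_\delta$ subset of $\mathcal G_{w*}(M)$; intersecting over the countably many rational $s$ keeps it $G_\delta$.

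The main obstacle I anticipate is the bookkeeping around the rescaling and around verifying that $F\mapsto \overline{(sF)(sF)}$ is genuinely Baire class $1$ rather than merely Vietoris-to-lower continuous on the nose. One has to be careful that $F\in\mathcal G_{w*}(M)$ satisfies $\overline{(sF)(sF)}\subseteq F$ for all rational $s\in(0,1)$ if and only if $V:=\,^\perp(\cdots)$ — i.e.\ the subspace with unit ball $F$ — is closed under multiplication; the forward direction uses that any $x,y\in V$ with $\|x\|,\|y\|\leq 1$ can be written as $x=s^{-1}x'$, $y=s^{-1}y'$ with $x',y'\in sF$ for suitable rational $s$, so $xy=s^{-2}x'y'$ and $x'y'\in\overline{(sF)(sF)}\subseteq F\subseteq V$, hence $xy\in V$ by linearity, and then one extends to all of $V$ by scaling. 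The reverse direction and the extraction of the Baire class $1$ property of the push-forward multiplication map are where I would expect to spend the most care; everything else is a routine assembly of the lemmas from Sections \ref{sec:prelim}, \ref{sec: topologies} and \ref{sec: grassman}. Finally, having written $\mathcal S(M)$ as a countable intersection of $G_\delta$ subsets of the Polish space $\mathcal G_{w*}(M)$ (Corollary \ref{cor: weakstar grassmanian is Polish}), it is $G_\delta$, hence Polish, which completes the proof.
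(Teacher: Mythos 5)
Your decomposition of $\mathcal S(M)$ into the three conditions (containing the unit, closure under adjoints, closure under multiplication) and your treatment of the first two coincide exactly with the paper's. Where you genuinely diverge is on multiplicative closure. The paper does not work with a set-valued product map at all: it first establishes the selection result (Proposition \ref{prop:weakly continous choice in the unit ball}, obtained from Michael's theorem together with the norm of Lemma \ref{lem: norm for weak}), giving continuous maps $x_n:\mathcal G_{w*}(M)\to(M)_1$ with $(x_n(V))_n$ weakly dense in $(V)_1$, and then writes multiplicative closure as the countable intersection over $n,m$ of the sets $\{V\colon x_n(V)x_m(V)\in V\}$. Each of these is $G_\delta$ because pointwise multiplication is Baire class $1$ on $(M)_1\times(M)_1$ (Proposition \ref{prop: mult is weakly baire class 1}), the $x_n$ are continuous, and membership is a closed relation (Lemma \ref{lem: belonging is closed}); the equivalence with genuine multiplicative closure then only uses the separate weak continuity of multiplication (Lemma \ref{lem: separate continuity mult weak}). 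Your route, if completed, would bypass the selection theorem entirely, which is an attractive feature; but it puts all the weight on a lemma you do not prove.

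That unproved lemma is where the gap lies. As you state it --- that $F\mapsto\overline{(sF)(sF)}$ is Baire class $1$ from $\vietoristopo$ to $\vietoristopo$ --- the claim is doubtful: for the upper Vietoris part one must control $\{F\colon\overline{m(sF\times sF)}\subseteq U\}$ where $m$ denotes multiplication, which is only Baire class $1$, and the natural computation only places this set in $\SSig^0_3$ rather than $\SSig^0_2$. Fortunately you only need the lower half: since $\{(F,G)\colon F\subseteq G\}$ is closed for $\lowertopo\times\uppervietoristopo$ (Lemma \ref{lem: closed inclusion vietoris}) and the identity is $\uppervietoristopo$-continuous, it suffices that $F\mapsto\overline{(sF)(sF)}$ be Baire class $1$ into the \emph{lower} topology. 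This is true and provable: $\overline{m(sF\times sF)}\cap U\neq\emptyset$ iff $(sF\times sF)\cap m^{-1}(U)\neq\emptyset$, and writing the $F_\sigma$ set $m^{-1}(U)$ as $\bigcup_k C_k$ with each $C_k$ closed, each set $\{F\colon(sF\times sF)\cap C_k\neq\emptyset\}$ is Vietoris-closed, being the preimage of the complement of the upper-Vietoris open set $\{K\colon K\subseteq ((M)_1\times(M)_1)\setminus C_k\}$ under the Vietoris-continuous map $F\mapsto sF\times sF$; hence the preimage of $\mathcal I_U$ is $F_\sigma$. You need to write this out; without it the proof is not complete. Two smaller remarks: the rescaling by $s$ is unnecessary here, since the operator norm is submultiplicative and so $F\cdot F\subseteq(M)_1$ already (a rescaling of this kind is needed only in the different context of the Remark following Corollary \ref{cor: weakstar grassmanian is Polish}, where one recognizes unit balls of subspaces); and your final assembly --- a countable intersection of $G_\delta$ subsets of the Polish space $\mathcal G_{w*}(M)$ from Corollary \ref{cor: weakstar grassmanian is Polish} is $G_\delta$, hence Polish --- is correct and identical to the paper's.
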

\begin{proof}
	Note that an ultraweakly closed subspace $V$ of $M$ is a von Neumann algebra iff its closed unit ball $(V)_1$ satisfies the following three additional conditions: 
	\begin{enumerate}[(i)]
		\item \label{cond: adjoint closed}$(V)_1$ is closed under taking adjoints;
		\item \label{cond: cont id}$(V)_1$ contains $1_M$;
		\item \label{cond: N closed under multiplication}  $(V)_1$ is closed under multiplication.
	\end{enumerate}
	Condition \eqref{cond: adjoint closed} defines a closed subset of $\mathcal G_{w*}(M)$ since the adjoint map is a homeomorphism of $(M)_1$ for the weak topology. Condition \eqref{cond: cont id} defines a closed set by Lemma \ref{lem: belonging is closed}. Condition \eqref{cond: N closed under multiplication} is more subtle since the product map is not weakly continuous, even on bounded sets. We will actually show that condition \eqref{cond: N closed under multiplication} defines a $G_\delta$ set, which will conclude the proof since closed sets are $G_\delta$ and any countable (in particular, finite) intersection of $G_\delta$ subsets is itself $G_\delta$.

	Consider a sequence $(x_n)$ as provided by Proposition \ref{prop:weakly continous choice in the unit ball}: for every $n$ the map $x_n: \mathcal G_{w*}(M)\to (M)_1$ is continuous and for each $V\in\mathcal G_{w*}(M)$, the sequence $(x_n(V))_{n\in\N}$ is dense in $(V)_1$ for the weak topology. 
	Now consider the following set: 
	\begin{align*}
	    \mathcal P(M)&\coloneqq\{V\in \mathcal G_{w*}(M)\colon \forall n,m\in\N, x_n(V)x_m(V)\in V\}\\
	    &=\bigcap_{n,m\in\N}\{V\in \mathcal G_{w*}(M)\colon x_n(V)x_m(V)\in V\}.
	\end{align*}
	Because the multiplication map is Baire class 1 for the weak topology (cf. Proposition \ref{prop: mult is weakly baire class 1}) and the relation $\in$ defines a closed set (Lemma \ref{lem: belonging is closed}) and the maps $x_n$ are continuous, we have that $\mathcal P(M)$ is a countable intersection of $G_\delta$ subsets, hence it is $G_\delta$. Let us check that the elements of $\mathcal P(M)$ are exactly those which satisfy \eqref{cond: N closed under multiplication} so as to finish the proof. 
	
	Clearly any $V$ satisfying \eqref{cond: N closed under multiplication} will belong to $\mathcal P(M)$. 
	Conversely, if $V\in\mathcal P(M)$, for any $n\in\N$ the weak continuity of left multiplication and density of $(x_m(V))_{m}$ in the unit ball of $V$ yields that $x_n(V)\cdot(V)_1\subseteq (V)_1$. 
	Then the weak continuity of right multiplication (see Lemma \ref{lem: separate continuity mult weak}) and the density of $(x_n(V))_n$ in $(V)_1$ yield that for every $x\in (V)_1$, we have
	$(V)_1\cdot x\subseteq (V)_1$. We conclude that the unit ball of $V$ is closed under multiplication as wanted.
\end{proof}

\subsection{Proof of the Haagerup-Winsløw  selection theorem}

The following result is implicit in \cite{haagerupEffrosMarechalTopologySpace1998}, and is the key to 
our proof of their selection theorem.

\begin{prop} \label{prop: same lower topologies on marechal}
	Let $M$ be a von Neumann algebra.
	The lower topologies associated to the weak topology and to the strong-$*$ topology on the unit ball coincide on $\mathcal S(M)$. 
\end{prop}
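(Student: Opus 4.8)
The plan is to show that the two lower topologies coincide by showing that each has the same basic open sets when restricted to $\mathcal S(M)$, or more precisely that a sub-basic open set for one is open for the other. Recall that the lower topology associated to a topology $\tau$ on $(M)_1$ has sub-basis $\{V\in\mathcal S(M)\colon (V)_1\cap U\neq\emptyset\}$ where $U$ ranges over $\tau$-open subsets of $(M)_1$. Since the strong-$*$ topology refines the weak topology, every weakly open $U$ is strong-$*$ open, so the weak lower topology is automatically coarser than (refined by) the strong-$*$ lower topology; the real content is the reverse inclusion. So the task reduces to: given a strong-$*$ open set $U\subseteq(M)_1$ and a subalgebra $N\in\mathcal S(M)$ with $(N)_1\cap U\neq\emptyset$, find a weakly open $U'\subseteq(M)_1$ such that $N\in\{V\colon (V)_1\cap U'\neq\emptyset\}$ and this weak sub-basic set is contained in the strong-$*$ one, i.e.\ $(V)_1\cap U'\neq\emptyset\Rightarrow (V)_1\cap U\neq\emptyset$ for all $V\in\mathcal S(M)$.

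\textbf{Key steps.} First I would pick $a\in (N)_1\cap U$, so that $U$ contains a basic strong-$*$ neighborhood of $a$, say $\{x\in(M)_1\colon \norm{(x-a)\xi_j}<\eps,\ \norm{(x^*-a^*)\xi_j}<\eps,\ j\leq k\}$. The crucial idea is to replace the point $a$ by a point of $(N)_1$ that behaves like an isometry or unitary, so that Lemma \ref{lem: unitary have same weak and strong nbhd on B1} applies to convert strong-$*$ control into weak control. Concretely, one should exploit that $a$ can be approximated strong-$*$ within $(N)_1$ by elements of the form $uc$ with $u$ a unitary (or isometry) in $N$ and $c$ in the unit ball — via polar decomposition inside the von Neumann algebra $N$, writing $a=u|a|$ with $u$ a partial isometry and then extending or using spectral approximations to push $|a|$ toward something invertible; alternatively one can first reduce to the case where the witnessing element is a unitary by using that the unitary group of $N$ is strong-$*$ dense in $(N)_1$ when $M$ (hence $N$) admits enough unitaries (Russo--Dye), so the sub-basic set $\{V\colon (V)_1\cap U\neq\emptyset\}$ equals $\{V\colon \mathcal U(V)\cap U\neq\emptyset\}$. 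Then, for a unitary $u\in (N)_1\cap U$, Lemma \ref{lem: unitary have same weak and strong nbhd on B1} gives a \emph{weak} neighborhood $W$ of $u$ with $W\cap(M)_1\subseteq U$. The weakly open set $U'\coloneqq W$ works: if $(V)_1\cap U'\neq\emptyset$ for some $V\in\mathcal S(M)$, then a fortiori $(V)_1\cap U\neq\emptyset$. Combined with the trivial inclusion, this gives equality of the two lower topologies on $\mathcal S(M)$.

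\textbf{Main obstacle.} The delicate point is the reduction to unitaries: Lemma \ref{lem: unitary have same weak and strong nbhd on B1} only converts strong-$*$ neighborhoods to weak ones at \emph{isometries and unitaries}, not at arbitrary elements of the unit ball, so one genuinely needs that the sub-basic lower-topology sets for the strong-$*$ topology on $\mathcal S(M)$ can be rewritten using only unitaries as witnesses. This is where the von Neumann algebra structure (as opposed to a general operator space) is essential — one uses Russo--Dye, or a direct argument that every element of $(N)_1$ is a strong-$*$ limit of unitaries of $N$ (or, when $N$ is merely a von Neumann algebra without enough unitaries in awkward corners, of isometries, which is still covered by the first half of Lemma \ref{lem: unitary have same weak and strong nbhd on B1}). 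Once that rewriting is in place, the rest is a routine neighborhood-chasing argument. I expect the write-up to spend most of its effort justifying that $\{V\in\mathcal S(M)\colon (V)_1\cap U\neq\emptyset\}=\{V\in\mathcal S(M)\colon \mathcal U(V)\cap U\neq\emptyset\}$ (or the isometry analogue) for $U$ strong-$*$ open, and then invoking Lemma \ref{lem: unitary have same weak and strong nbhd on B1} to finish.
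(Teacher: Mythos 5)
There is a genuine gap, and it sits exactly at the step you flag as delicate. Your reduction to unitary witnesses rests on the claimed identity $\{V\in\mathcal S(M)\colon (V)_1\cap U\neq\emptyset\}=\{V\in\mathcal S(M)\colon \mathcal U(V)\cap U\neq\emptyset\}$ for $U$ strong-$*$ open, justified by "the unitary group of $N$ is strong-$*$ dense in $(N)_1$". This is false. Multiplication is jointly strong-$*$ continuous on bounded sets, so if unitaries $u_i\to x$ strong-$*$ with $x\in(M)_1$, then $u_iu_i^*=1\to xx^*$ and $u_i^*u_i=1\to x^*x$ strongly, forcing $x$ to be unitary; thus the unitary group is strong-$*$ \emph{closed} in the unit ball and in particular never dense (it does not approach $0$). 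Concretely, if $U$ is a small strong-$*$ neighborhood of $0$ in $(M)_1$, then $(V)_1\cap U\ni 0$ for every $V$, while $\mathcal U(V)\cap U=\emptyset$ for, say, $V=\C 1$. Russo--Dye only gives norm-density of \emph{convex combinations} of unitaries, not strong-$*$ density of unitaries themselves. The isometry fallback fails for the same reason, and moreover Lemma \ref{lem: unitary have same weak and strong nbhd on B1} converts only \emph{strong} (not strong-$*$) neighborhoods at mere isometries. The polar-decomposition variant is too vague to carry the argument and does not produce a unitary element of $(N)_1\cap U$ either.

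The paper's proof uses unitaries in a different way that avoids any density claim: it writes the witness $x_0\in (N_0)_1\cap U$ \emph{exactly} as a linear combination of four unitaries of $N_0$, namely $x_0=\sum_{i=1}^4\lambda_iu_i$ (split into real and imaginary parts and use $h=\frac12(u+u^*)$ with $u=h+i\sqrt{1-h^2}$). It then fixes a strong-$*$ continuous Kaplansky-type retraction $\phi:M\to(M)_1$ with $\phi=\mathrm{id}$ on $(M)_1$ (which preserves von Neumann subalgebras), chooses strong-$*$ neighborhoods $U_i$ of the $u_i$ with $\phi\bigl(\sum_i\lambda_iU_i\bigr)\subseteq U\cap(M)_1$, shrinks each $U_i$ to a weak neighborhood $V_i$ via Lemma \ref{lem: unitary have same weak and strong nbhd on B1}, and checks that $\bigcap_{i=1}^4\mathcal I_{V_i}$ is a weak lower neighborhood of $N_0$ contained in $\mathcal I_U$: for $V$ in this intersection one picks $y_i\in(V)_1\cap V_i$ and observes that $\phi\bigl(\sum_i\lambda_iy_i\bigr)\in(V)_1\cap U$. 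Your outline is missing both the exact four-unitary decomposition and the retraction $\phi$, and without them the neighborhood-chasing cannot be completed.
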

\begin{proof}
	Of course the lower topology associated the weak topology is refined by the lower topology associated to the strong-* topology. 
	Conversely, we need to show that given a von Neumann subalgebra $N_0\subseteq M$, every lower subbasic strong-$*$ neighborhood of $N_0$ is a lower weak neighborhood of $N_0$. 
	
	To this end, fix a strong-$*$ open set $U$ intersecting $(N_0)_1$ non-trivially and consider the lower strong-$*$ subbasic neighborhood $\mathcal I_U$ of $N_0$. 
    Since we  will need to stay in the unit ball, the following version
    of the Kaplansky density theorem will be used.\\
    
    \begin{claim}
        There is a strong-$*$ to strong-$*$ continuous map $\phi:M\to(M)_1$ such that for all $x\in (M)_1$, $\phi(x)=x$.
    \end{claim}
    \begin{cproof}
        The map $\phi$ constructed in the proof of \cite[Lem.~2.2]{haagerupEffrosMarechalTopologySpace1998} works as required (to see that it is strong-$*$ to strong-$*$ continuous, one can for instance appeal to \cite[Lem.~44.2]{conwayCourseOperatorTheory2000}).
    \end{cproof}
	
	Now let $x_0\in U\cap(N_0)_1$, and fix a map $\phi$ as in the claim. Since $N_0$ is a von Neumann algebra, $x_0=\phi(x_0)$
	can be written as a linear combination of four unitaries 
	$$x_0=\lambda_1u_1+\lambda_2 u_2+\lambda_3u_3+\lambda_4u_4.$$
	By strong-$*$ continuity of addition and of $\phi$, we find for $i\in\{1,\dots,4\}$ 
	a strong-$*$ open set $U_i$ containing $u_i$ such that $\phi(\lambda_1U_1+\lambda_2U_2+\lambda_3U_3+\lambda_4U_4)\subseteq U\cap (M)_1$. 
	
	By Lemma \ref{lem: unitary have same weak and strong nbhd on B1} we find weakly open subsets $V_i$ of $(M)_1$ containing $u_i$ such that $V_i\subseteq U_i\cap(M)_1$. 
	Then $\bigcap_{i=1}^4 \mathcal I_{V_i}$ is a lower weak neighborhood of $N_0$
	contained in $\mathcal I_{U}$ as desired.
\end{proof}

We can finally state and prove the Haagerup-Winsløw  selection theorem, thus upgrading Proposition \ref{prop:weakly continous choice in the unit ball} in the context of $\mathcal S(M)$.

\begin{thm}[{\cite[Thm. 3.9]{haagerupEffrosMarechalTopologySpace1998}}]\label{thm:*strongly continous choice in the unit ball}
	Let $M$ be a von Neumann algebra with separable predual. 
	There is a sequence $(x_n)$ of strong-$*$ continuous maps $\mathcal S(M)\to (M)_1$ such that for all $N\in\mathcal S(M)$, the set $\{x_n(N)\colon n\in\N\}$ is dense in $(N)_1$ for the strong-$*$ topology. 
\end{thm}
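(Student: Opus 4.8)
The plan is to obtain the theorem directly from the dense form of Michael's selection theorem (Corollary \ref{cor: michael dense selection}), following the same scheme as the proof of Proposition \ref{prop:weakly continous choice in the unit ball} but with the strong-$*$ norm of Lemma \ref{lem: norm for strong star} in place of the weak one, and with Proposition \ref{prop: same lower topologies on marechal} supplying the continuity hypothesis. First I would fix a faithful normal representation of $M$ on a separable Hilbert space $\mathcal H$, so that $(M)_1\subseteq\BH_1$ and the strong-$*$ topology of $(M)_1$ is inherited from that of $\BH_1$. Let $\rho$ be the norm on $\BH$ provided by Lemma \ref{lem: norm for strong star}, whose restriction to $\BH_1$ induces a complete metric $d_\rho$ compatible with the strong-$*$ topology. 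I would then apply Corollary \ref{cor: michael dense selection} with $E=\BH$ equipped with $\rho$, with $C=(M)_1$, and with $X=\mathcal S(M)$ carrying the Maréchal topology. Here $C$ is convex; it is $d_\rho$-complete because $M$, being a von Neumann algebra, is weakly closed, hence strong-$*$ closed in $\BH$, so that $(M)_1$ is a closed, hence complete, subset of the complete metric space $(\BH_1,d_\rho)$; and $C$ is separable for the strong-$*$ topology as a routine consequence of the separability of $\mathcal H$. Finally $X=\mathcal S(M)$ is Polish by Theorem \ref{thm:polish_vN}.

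The multimap fed into the selection theorem is $F\colon\mathcal S(M)\to\closedconvexstar((M)_1)$, $N\mapsto(N)_1$, which is well defined since $(N)_1$ is non-empty, convex and weakly compact, hence strong-$*$ closed in $(M)_1$. The only substantial point is the \emph{lower continuity} of $F$, i.e.\ its continuity from the Maréchal topology to the lower topology on $\closedconvexstar((M)_1)$ associated to the strong-$*$ topology on $(M)_1$. Now, under the standing identification of a subalgebra with its unit ball, $F$ is simply the inclusion of $\mathcal S(M)$ into $\closedconvexstar((M)_1)$; the Maréchal topology is the Vietoris topology inherited from the weak topology on $(M)_1$, and since the Vietoris topology refines the lower topology, $F$ is continuous into the lower topology associated to the weak topology on $(M)_1$. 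By Proposition \ref{prop: same lower topologies on marechal}, the lower topologies on $\mathcal S(M)$ associated to the weak topology and to the strong-$*$ topology coincide, so $F$ is in fact continuous into the lower topology associated to the strong-$*$ topology, which is precisely the lower continuity needed.

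All hypotheses of Corollary \ref{cor: michael dense selection} being met, it produces a sequence of continuous maps $x_n\colon\mathcal S(M)\to(M)_1$ — continuous from the Maréchal topology to $d_\rho$, hence to the strong-$*$ topology — such that $F(N)=\overline{\{x_n(N)\colon n\in\N\}}$ for every $N\in\mathcal S(M)$, the closure being the $d_\rho$-closure, i.e.\ the strong-$*$ closure, in $(M)_1$. Since $F(N)=(N)_1$, this is exactly the assertion of the theorem. I expect the sole genuine obstacle to be the lower continuity of $F$; but this has already been isolated and proved as Proposition \ref{prop: same lower topologies on marechal} (itself resting on the Kaplansky-type retraction of the Claim therein and on Lemma \ref{lem: unitary have same weak and strong nbhd on B1}), so the proof amounts to assembling results established above.
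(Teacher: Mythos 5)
Your proof is correct and follows essentially the same route as the paper's: Polishness of $\mathcal S(M)$ from Theorem \ref{thm:polish_vN}, identification of the weak and strong-$*$ lower topologies via Proposition \ref{prop: same lower topologies on marechal}, and the dense form of Michael's selection theorem with the norm of Lemma \ref{lem: norm for strong star}. You even correctly invoke Lemma \ref{lem: norm for strong star} where the paper's proof mistakenly cites Lemma \ref{lem: norm for weak}, and your added verifications of completeness, separability and lower continuity are exactly the points the paper leaves implicit.
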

\begin{proof}
    By Theorem \ref{thm:polish_vN} the space $\mathcal S(M)$ is Polish for the Vietoris topology associated to the weak topology on $(M)_1$, 
    which refines the lower topology associated to the weak topology. 
    Proposition \ref{prop: same lower topologies on marechal} implies that the latter coincides with the lower topology associated to the strong-$*$ topology on $(M)_1$. 
	By the dense version of Michael's selection theorem (Corollary \ref{cor: michael dense selection}), we now only need to show that we can endow $M$ with a norm  whose restriction to the unit ball of $M$ induces a complete metric compatible with the strong-$*$ topology, which is precisely the content of  Lemma \ref{lem: norm for weak}.\end{proof}

\subsection{The set of finite von Neumann algebras is \texorpdfstring{$\Pizerothree$}{Pi zero three}-complete}

As an addition to the numerous applications of the above theorem that one can find in \cite{haagerupEffrosMarechalTopologySpace1998,haagerupEffrosMarechalTopology2000}, we find an optimal refinement of a result of Nielsen showing that the set of finite von Neumann algebras is Borel \cite{nielsenBorelSetsNeumann1973}.

Recall that a \textbf{finite} von Neumann algebra $M$ with separable predual is characterised by the existence of a finite trace, namely a faithful normal state $\tau:M\to\C$ such that $\tau(ab)=\tau(ba)$ for all $a,b\in M$, on it. More generally, finite von Neumann algebras are defined by "finiteness" of the identity projection, which in terms of the comparison theory of projections, means that the identity operator is not (Murray-von Neumann) equivalent to any proper subprojection in $M$. We refer the reader to \cite[Sec.~6.3]{liIntroductionOperatorAlgebras1992} for more details on finite von Neumann algebras, including the proofs of equivalences between different characterisations of it.

The characterization that will prove useful to us relies on the existence of a compatible norm for the strong topology on the unit ball in the same spirit as those from Section
\ref{sec: ex of norms}.
We now fix one such norm, although some of the equivalent statements below would be more naturally stated in terms of uniform structures. Let $(\xi_n)$ be dense in $(\H)_1$, define for $x\in\BH$
$$\rho_s(x)=\sum_n \frac 1{2^n} \norm{x\xi_n}.$$
A straightforward simplification of the proof of Lemma \ref{lem: norm for strong star} yields that $\rho_s$ is a norm inducing a metric $d_s$ whose restriction to the unit ball is complete and induces the strong topology.

\begin{thm}\label{thm: chara finite}
Let $M$ be a von Neumann algebra with separable predual endowed with the aforesaid norm $\rho_s$ and let $d_s$ denote the associated metric. Then the following are equivalent:
\begin{enumerate}
    \item \label{item: M finite}$M$ is finite;
    \item \label{item: star is continuous on unit ball}$x\mapsto x^*$ is continuous for the strong topology on $(M)_1$;
    \item \label{item: star is continuous at zero}$x\mapsto x^*$ is continuous at $0$ for the strong topology on $(M)_1$;
    \item \label{item: star is unif continuous on unit ball}$x\mapsto x^*$ is uniformly continuous for $d_s$ on $(M)_1$.
\end{enumerate}
\end{thm}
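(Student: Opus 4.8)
The plan is to prove the cycle of implications $(1)\Rightarrow(4)\Rightarrow(2)\Rightarrow(3)\Rightarrow(1)$. The implications $(4)\Rightarrow(2)$ and $(2)\Rightarrow(3)$ are immediate: since $d_s$ induces the strong topology on $(M)_1$, uniform $d_s$-continuity of $x\mapsto x^*$ forces strong continuity, and continuity on all of $(M)_1$ trivially gives continuity at the point $0$. So all the content is in $(3)\Rightarrow(1)$ and in $(1)\Rightarrow(4)$.

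For $(3)\Rightarrow(1)$ I argue the contrapositive by an explicit shift-type construction. If $M$ is not finite then $1_M$ is Murray--von Neumann equivalent to a proper subprojection, so there is a partial isometry $v\in M$ with $v^*v=1_M$ and $f:=1_M-vv^*\neq 0$. I set $x_n:=f(v^*)^n\in (M)_1$; each $x_n$ is a partial isometry, so $\norm{x_n}\le 1$. Using $v^*v=1_M$ one computes $x_nx_n^*=f$ and $x_n^*x_n=v^nf(v^*)^n=v^n(v^*)^n-v^{n+1}(v^*)^{n+1}$. Hence for every $\xi\in\mathcal H$ the partial sums $\sum_{k=0}^{n}\langle x_k^*x_k\xi,\xi\rangle$ telescope to $\norm{\xi}^2-\langle v^{n+1}(v^*)^{n+1}\xi,\xi\rangle\le\norm{\xi}^2$, so the nonnegative series $\sum_k\norm{x_k\xi}^2=\sum_k\langle x_k^*x_k\xi,\xi\rangle$ converges and $\norm{x_n\xi}\to 0$; thus $x_n\to 0$ strongly. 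On the other hand $\norm{x_n^*\xi}^2=\langle x_nx_n^*\xi,\xi\rangle=\norm{f\xi}^2$ is independent of $n$, so picking $\xi$ in the (nonzero) range of $f$ shows $x_n^*\not\to 0$ strongly. Therefore $x\mapsto x^*$ is not strongly continuous at $0$ on $(M)_1$, i.e.\ $\neg(3)$ holds.

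For $(1)\Rightarrow(4)$ — which I expect to be the main obstacle — I use the trace. Being finite with separable predual, $M$ admits a faithful normal tracial state $\tau$; put $d_\tau(x,y)=\tau\bigl((x-y)^*(x-y)\bigr)^{1/2}$. The trace identity $\tau\bigl((x^*-y^*)^*(x^*-y^*)\bigr)=\tau\bigl((x-y)(x-y)^*\bigr)=\tau\bigl((x-y)^*(x-y)\bigr)$ shows that $x\mapsto x^*$ is a $d_\tau$-isometry of $M$, hence uniformly $d_\tau$-continuous on $(M)_1$. It then suffices to show that $d_s$ and $d_\tau$ induce the \emph{same uniform structure} on $(M)_1$, after which the $d_\tau$-uniform continuity of the adjoint transfers to $d_s$ and yields $(4)$. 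This last point is the delicate one: mere strong continuity of $x\mapsto x^*$ on $(M)_1$ (which follows at once from the trace, since $d_\tau$ metrizes the strong topology on $(M)_1$) does \emph{not} by itself give uniform continuity for the particular metric $d_s$. One has to use — or reprove in the spirit of Section \ref{sec: ex of norms} — that on bounded subsets of $M$ the uniform structure of the strong topology is intrinsic, independent of the faithful normal representation and equal to the one coming from the $\sigma$-strong topology; that $d_s$ metrizes it because the $\xi_n$ are dense in $(\mathcal H)_1$; and that $d_\tau$ metrizes it too, because in the GNS representation of $\tau$ the cyclic vector $\Omega_\tau$ is separating, so $M'\Omega_\tau$ is dense and $\norm{xb\Omega_\tau}=\norm{bx\Omega_\tau}\le\norm{b}\,\norm{x}_\tau$ for $b\in M'$, while a normal positive functional is a countable sum of vector functionals whose seminorms are, on bounded sets, uniformly approximated by finitely many of them.

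In summary, the trivial implications and the explicit construction in $(3)\Rightarrow(1)$ are routine; the one place where care is genuinely needed is the comparison of uniform structures inside $(1)\Rightarrow(4)$, i.e.\ showing that the fixed metric $d_s$ is uniformly equivalent on $(M)_1$ to the trace metric $d_\tau$ for which the adjoint is visibly an isometry.
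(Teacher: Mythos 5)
Your proof is correct, but it takes a genuinely different and considerably longer route than the paper's. The paper cites Sakai for the equivalence of \eqref{item: M finite} and \eqref{item: star is continuous on unit ball} and then closes the cycle with a single observation: since $\rho_s$ is a \emph{norm} and $x\mapsto x^*$ is antilinear, continuity of the adjoint at $0$ on $(M)_1$ upgrades automatically to uniform continuity --- if $x,y\in(M)_1$ then $z\coloneqq\frac12(x-y)\in(M)_1$ and $\rho_s(x^*-y^*)=2\rho_s(z^*)$ is controlled by $\rho_s(z)=\frac12\rho_s(x-y)$ --- so \eqref{item: star is continuous at zero} implies \eqref{item: star is unif continuous on unit ball}, and \eqref{item: star is unif continuous on unit ball} implies \eqref{item: star is continuous on unit ball} trivially. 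You instead reprove both directions of Sakai's theorem: your shift construction $x_n=f(v^*)^n$ for $\neg(1)\Rightarrow\neg(3)$ is correct (the telescoping and the identity $x_nx_n^*=f$ check out, and sequences suffice since $d_s$ metrizes the strong topology on $(M)_1$) and makes the statement self-contained, which is a genuine gain over a citation. For $(1)\Rightarrow(4)$ you pass through the trace metric $d_\tau$ and a comparison of \emph{uniform} structures on $(M)_1$; the claims in your sketch (representation-independence of the bounded $\sigma$-strong uniform structure, that $d_s$ and $d_\tau$ both induce it) are true and the outline can be completed, but this is the heaviest part of your argument and it can be bypassed: once the trace shows the adjoint is strongly continuous at $0$ on $(M)_1$ --- which only needs the \emph{topological} comparison of $d_\tau$ with the strong topology on the unit ball, not the uniform one --- the homogeneity of $\rho_s$ finishes the job exactly as above. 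In short: correct and more self-contained than the paper, but you identified the uniform-structure comparison as the main obstacle where a one-line scaling argument makes it disappear.
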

\begin{proof}
The equivalence between \eqref{item: M finite} and \eqref{item: star is continuous on unit ball} follows from results of Sakai \cite{sakai1}, see also \cite[Thm.~6.3.12]{liIntroductionOperatorAlgebras1992}.
The fact that \eqref{item: star is continuous on unit ball}  implies  \eqref{item: star is continuous at zero} is clear.

Next, since $\rho_s$ is a norm and $x\mapsto x^*$ is antilinear, continuity at zero implies uniform continuity, so \eqref{item: star is continuous at zero} implies \eqref{item: star is unif continuous on unit ball}. 
The latter directly implies \eqref{item: star is continuous on unit ball}, which finishes the proof.
\end{proof}

\begin{thm}\label{thm: complexity finite}
    Let $\mathcal H$ be a separable Hilbert space.
    The set of finite von Neumann subalgebras in $\mathcal S(\BH)$ is $\Pizerothree$-complete.
\end{thm}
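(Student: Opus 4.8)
The plan is to establish the two halves of the statement separately: the upper bound $\Subfin(\BH)\in\Pizerothree$, and the matching $\Pizerothree$-hardness. Combining them yields $\Pizerothree$-completeness.

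For the upper bound I would combine the characterization of finiteness in Theorem~\ref{thm: chara finite} with the Haagerup--Winsløw selection theorem (Theorem~\ref{thm:*strongly continous choice in the unit ball}). Fix strong-$*$ continuous maps $x_n\colon\Sub(\BH)\to(\BH)_1$ such that $\{x_n(N)\colon n\in\N\}$ is strong-$*$ (hence strong, hence $d_s$-) dense in $(N)_1$ for every $N$. The key reduction is that $N$ is finite if and only if
\[
\forall k\in\N\;\exists m\in\N\;\forall p,q\in\N\quad\Bigl(d_s\bigl(x_p(N),x_q(N)\bigr)<\tfrac1m\ \Longrightarrow\ d_s\bigl(x_p(N)^*,x_q(N)^*\bigr)\leq\tfrac1k\Bigr).
\]
The forward direction is immediate from the uniform continuity of the adjoint on $(N)_1$ (item~(4) of Theorem~\ref{thm: chara finite}). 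For the converse one observes that the displayed condition says precisely that $x\mapsto x^*$ is $d_s$-uniformly continuous on the dense subset $\{x_n(N)\}$, hence extends to a uniformly continuous map on the $d_s$-complete set $(N)_1$ (it is strongly closed in $(\BH)_1$), and this extension must coincide with the adjoint since the latter is weakly continuous and $d_s$ induces a refinement of the weak topology; so $N$ is finite by Theorem~\ref{thm: chara finite}. Now $N\mapsto x_p(N)$ and $N\mapsto x_p(N)^*$ are continuous from $\Sub(\BH)$ to $((\BH)_1,d_s)$ (the adjoint being strong-$*$ continuous), so for fixed $k,m,p,q$ the inner condition defines a closed subset of $\Sub(\BH)$; intersecting over $p,q$ keeps it closed, the union over $m$ is then $F_\sigma$, and the intersection over $k$ exhibits $\Subfin(\BH)$ as $\Pizerothree$.

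For $\Pizerothree$-hardness I would continuously reduce $\Pfin(\N)^\N$, which is $\Pizerothree$-complete in $(\{0,1\}^\N)^\N$ by Lemma~\ref{lem: sequence of finite sets is Pi 0 3 complete}. Fix a separable infinite-dimensional Hilbert space $\mathcal K$ with orthonormal basis $(e_j)_{j\geq1}$ and let $\mathcal H=\bigoplus_{n\geq1}\mathcal K$. For $A\subseteq\N$ let $P_A$ be the orthogonal projection onto $\overline{\Span\{e_j\colon j\in A\}}$ and set $M_A\coloneqq P_A\B(\mathcal K)P_A\oplus\C(1-P_A)$, a von Neumann subalgebra of $\B(\mathcal K)$ with the same unit. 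Then $M_A$ is finite iff $A$ is finite: if $A$ is infinite the summand $P_A\B(\mathcal K)P_A\cong\B(P_A\mathcal K)$ is a type $\mathrm{I}_\infty$ factor, hence not finite, while if $A$ is finite then $M_A$ is finite-dimensional. The map $A\mapsto M_A$ is Maréchal-continuous: by Proposition~\ref{PropConvergenceSubspace} it suffices that $\sup_{x\in(M_{A_i})_1}\abs{\tr(xT)}\to\sup_{x\in(M_A)_1}\abs{\tr(xT)}$ for every trace-class $T$ on $\mathcal K$ whenever $A_i\to A$ in $\{0,1\}^\N$, and one computes $\sup_{x\in(M_A)_1}\abs{\tr(xT)}=\norm{P_ATP_A}_1+\abs{\tr((1-P_A)T)}$, where $A_i\to A$ forces $P_{A_i}\to P_A$ strongly, hence $P_{A_i}TP_{A_i}\to P_ATP_A$ in trace norm (using that $T$ is trace class and the $P_{A_i}$ are self-adjoint of norm $\leq1$). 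Likewise the block-diagonal direct sum map $\Sub(\B(\mathcal K))^\N\to\Sub(\BH)$, $(N_n)_n\mapsto\bigoplus_n N_n$, is continuous: for a trace-class $T=(T_{m,n})_{m,n}$ on $\mathcal H$ one has $\sup_{x\in(\bigoplus_n N_n)_1}\abs{\tr(xT)}=\sum_n\sup_{y\in(N_n)_1}\abs{\tr(yT_{n,n})}$ with $\sum_n\norm{T_{n,n}}_1<\infty$, so componentwise convergence of the $N_n$ combined with dominated convergence gives the claim via Proposition~\ref{PropConvergenceSubspace}; and $\bigoplus_n N_n$ is finite iff every $N_n$ is finite. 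Composing, $\Psi\colon(A_n)_n\mapsto\bigoplus_n M_{A_n}$ is continuous from $(\{0,1\}^\N)^\N$ to $\Sub(\BH)$ with $\Psi^{-1}\bigl(\Subfin(\BH)\bigr)=\Pfin(\N)^\N$, so $\Subfin(\BH)$ is $\Pizerothree$-hard, and together with the upper bound it is $\Pizerothree$-complete.

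The main obstacle is the two continuity verifications for the Maréchal topology — that $A\mapsto M_A$ and the infinite direct sum map are continuous — which I would handle through the Effros--Maréchal convergence criterion of Proposition~\ref{PropConvergenceSubspace} and the trace-norm estimates sketched above (the only delicate point being that self-adjointness of the $P_A$ is what lets strong convergence pass to trace-norm convergence after multiplying by a fixed trace-class operator). The remaining ingredients — that direct sums and finite-dimensional algebras are finite while type $\mathrm{I}_\infty$ factors are not, and the purely formal reduction bookkeeping — are routine.
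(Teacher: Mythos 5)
Your proposal is correct and follows essentially the same route as the paper: the upper bound comes from the uniform-continuity characterization of finiteness (Theorem \ref{thm: chara finite}) applied to the Haagerup--Winsl{\o}w selection maps, and hardness comes from a continuous reduction of $\Pfin(\N)^\N$ using block algebras $\B(\ell^2(S_n))$ together with a scalar corner, with continuity verified through the convergence criterion of Proposition \ref{PropConvergenceSubspace}. The only differences are cosmetic --- you place a scalar summand in each block rather than a single global one, and you check the norm-convergence criterion for all trace-class functionals directly instead of on a dense set --- while you usefully make explicit the density/extension argument for the adjoint that the paper leaves implicit.
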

\begin{proof}
Denote by $\Subfin(\BH)$ the space of finite (unital) von Neumann subalgebras of $\BH$.
Fix a sequence of Maréchal to strong-$*$ continuous maps $x_n:\mathcal S(\BH)\to (\BH)_1$ as in Theorem \ref{thm:*strongly continous choice in the unit ball}, so that for every $M\in\mathcal S(\BH)$ $(x_n(M))$ is strong-$*$ dense in $(M)_1$.
Then by Theorem \ref{thm: chara finite}, $M\in\Subfin(\BH)$ iff for every $\epsilon>0$, 
there is $\delta>0$ such that 
\[
\text{for all }n,m\in\N\text{, if }d_s(x_n(M),x_m(M))<\delta\text{ then }d_s(x_n^*(M),x_m^*(M))\leq \epsilon.
\]
For every $n,m\in\N$ and $\delta,\epsilon>0$, let $\mathcal F_{n,m,\delta,\epsilon}$ denote the 
set of all $M\in\mathcal S(\BH)$ such that if $d_s(x_n(M),x_m(M))<\delta$, then $d_s(x_n^*(M),x_m^*(M))\leq \epsilon$. 
Equivalently, $\mathcal F_{n,m,\delta,\epsilon}$ is the set  of all $M\in\mathcal S(\BH)$ such that $d_s(x_n(M),x_m(M))\geq \delta$ or $d_s(x_n^*(M),x_m^*(M))\leq \epsilon$, so by continuity it is closed. So $\bigcap_{n,m} \mathcal F_{n,m,\delta,\epsilon}$ is closed as well.
By the above discussion
\[
\Subfin(\BH)=\bigcap_{\epsilon\in\Q^{>0}}\bigcup_{\delta\in\Q^{>0}}\bigcap_{n,m\in\N}\mathcal F_{n,m,\delta,\epsilon},
\]
so $\Subfin(\BH)$ is $\PPi^0_3$ as wanted.

To see that it is $\Pizerothree$-complete, let us recall
that the set $\Pfin(\N)^\N$ of sequences of finite subsets of $\N$ is a $\PPi^0_3$-complete subset of $(\{0,1\}^\N)^\N$ by Lemma \ref{lem: sequence of finite sets is Pi 0 3 complete}, so we will be done once we continuously reduce the latter set to $\Subfin(\BH)$.
To this end, we work on $\mathcal H=\ell^2(\N\times\N)$, which 
we freely identify with $\ell^2(\N)\otimes\ell^2(\N)$. Given a sequence $S=(S_n)_n\in(\{0,1\}^\N)^\N$ of subsets of $\N$, we map it to
$$f(S)\coloneqq\left(\bigoplus_n \mathcal B(\ell^2(\{n\}\times S_n))\right)\oplus\C(1-\pi_S)\subseteq \mathcal B(\mathcal H),$$
where $\pi_S\coloneqq\sum_ne_{nn}\ot p_{S_n}$, and  $e_{kl}=\la e_l,\cdot\ra e_k$ are the matrix units associated to the canonical orthonormal basis $(e_n)_{n\in\N}$ of $\ell^2(\N)$ and, for a subset $A\subseteq\N$, $p_{A}\coloneqq\sum_{k\in A}e_{kk}\in\mathcal{B}(\ell^2(\N))$ is the orthogonal projection onto $\ell^2(A)$. 
Clearly, $f((S_n)_n)$ is finite (as a von Neumann algebra) iff for all $n$ the subset $S_n$ is finite, thus it suffices to show that $f$ is continuous. 

By Proposition \ref{PropConvergenceSubspace}, the continuity of $f$ will be established once we show that for each linear functional $\omega\in\BH_*$, we have that the map $S\mapsto \norm{\omega_{\restriction f(S)}}$ is continuous.
Let us thus consider the set 
$$
\mathcal{Z}\coloneqq\{\omega\in\mathcal{B}(\mathcal H)_*\,:\
S\mapsto \Vert\omega_{\restriction f(S)}\Vert\text{ is continuous}\}.
$$ 
Since for any linear subspace $M$ of $\BH$, the map $\omega\mapsto \omega_{\restriction M}$ is $1$-Lipschitz, the set $\mathcal{Z}$ is norm closed, and we thus only have to show that $\mathcal Z$ is dense.

For any vectors $\xi,\eta\in\mathcal H$ and $x\in\BH$, let $\omega_{\xi,\eta}(x)=\la \xi,x \eta\ra$.
The set of linear functionals of the form $$\omega=\sum_{i,j=1}^n\omega_{e_i\ot\xi_i,e_j\ot\eta_j},$$ for $\xi_i,\eta_j\in \ell^2(\N)$, is norm dense in $\mathcal{B}(\mathcal H)_*$ (see e.g.~\cite[Thm.~1.1.9 and Prop.~1.1.7]{liIntroductionOperatorAlgebras1992}), and we will show that it is contained in $\mathcal Z$, thus finishing the proof. 

Fix a linear functional $\omega=\sum_{i,j=1}^n\omega_{e_i\ot\xi_i,e_j\ot\eta_j}$ as above, and let us compute $\Vert\omega_{\restriction f(S)}\Vert$ where $S=(S_n)_n\in(\{0,1\}^\N)^\N$. Note that the unit ball of $f(S)$ is exactly the set of  $x\in\mathcal{B}(\mathcal H)$ of the form 
$$
x=\sum_{n\in\N}e_{nn}\ot \iota_{S_n}x_np_{S_n}+\lambda(1-\pi_S),
$$
where $x_n\in\mathcal{B}(\ell^2(S_n))$ and $\lambda\in\C$ are such that $\Vert x_n\Vert\leq 1$ and $\vert\lambda\vert\leq 1$, and $\iota_{S_n}$ denotes the natural inclusion of $\ell^2(S_n)$ in $\ell^2(\N)$. For such $x$ one has 
$$
\omega(x)=\sum_{i=1}^n\omega_{p_{S_i}\xi_i, p_{S_i}\eta_i}(x_i)+\lambda\omega(1-\pi_S).
$$ 
In particular, 
$$
\norm{\omega_{\restriction f(S)}}\leq\sum_{i=1}^n\Vert p_{S_i}\xi_i\Vert\,\Vert p_{S_i}\eta_i\Vert+\vert\omega(1-\pi_S)\vert.
$$ 
By considering the element of the unit ball of $f(S)$ defined by $$x\coloneqq\sum_{i=1}^ne_{ii}\ot 
\frac{\la p_{S_i}\eta_i,\cdot \ra p_{S_i}\xi_i}
{\Vert p_{S_i}\xi_i\Vert\,\Vert p_{S_i}\eta_i\Vert}
+\frac{\vert\omega(1-\pi_S)\vert}{\omega(1-\pi_S)}(1-\pi_S)$$ where each term with a vanishing denominator is replaced by $0$, we find:
$$
\norm{\omega_{\restriction f(S)}}=\sum_{i=1}^n\Vert p_{S_i}\xi_i\Vert\,\Vert p_{S_i}\eta_i\Vert+\vert\omega(1-\pi_S)\vert.$$ 
Observe that for all $n\in\N$,  the map $S=(S_n)_n\mapsto p_{S_{n}}$ is continuous for the strong topology since for every basic vector $e_j$ we have  
$p_{S_{n}}(e_j)=\mathds 1_{S_n}(j)$.
Similarly, we have that $S\mapsto \pi_S$ is continuous for the strong topology.
We conclude that when $\omega$ is of the form 
\[
\omega=\sum_{i,j=1}^n\omega_{e_i\ot\xi_i,e_j\ot\eta_j},
\]
the map $S\mapsto \norm{\omega_{\restriction f(S)}}$ is continuous. As explained before, by density this finishes the proof.
\end{proof}


\bibliographystyle{alphaurl}
\bibliography{biblio}

\begin{thebibliography}{Rud07}

\bibitem[Bee91]{beerPolishtopologyclosed1991}
Gerald Beer.
\newblock A {{Polish}} topology for the closed subsets of a {{Polish}} space.
\newblock {\em Proceedings of the American Mathematical Society},
  113(4):1123--1133, 1991.
\newblock \href {https://doi.org/10.1090/S0002-9939-1991-1065940-6}
  {\path{doi:10.1090/S0002-9939-1991-1065940-6}}.

\bibitem[Bee93]{beerTopologiesClosedClosed1993}
Gerald~A. Beer.
\newblock {\em Topologies on Closed and Closed Convex Sets}.
\newblock Number 268 in Mathematics and Its Applications. {Kluwer Academic
  Publ}, {Dordrecht}, 1993.

\bibitem[Con00]{conwayCourseOperatorTheory2000}
John~B. Conway.
\newblock {\em A {{Course}} in {{Operator Theory}}}.
\newblock American Mathematical Soc., 2000.

\bibitem[Dyd03]{dydakPartitionsUnity2003}
Jerzy Dydak.
\newblock Partitions of unity.
\newblock {\em Topology Proceedings}, 27(1):125--171, 2003.

\bibitem[Eff65]{effrosBorelspaceNeumann1965}
Edward~G. Effros.
\newblock The {{Borel}} space of von {{Neumann}} algebras on a separable
  {{Hilbert}} space.
\newblock {\em Pacific Journal of Mathematics}, 15(4):1153--1164, 1965.

\bibitem[HW98]{haagerupEffrosMarechalTopologySpace1998}
Uffe Haagerup and Carl Winsl{\o}w.
\newblock The {{Effros-Mar\'echal Topology}} in the {{Space}} of {{Von Neumann
  Algebras}}.
\newblock {\em American Journal of Mathematics}, 120(3):567--617, 1998.

\bibitem[HW00]{haagerupEffrosMarechalTopology2000}
Uffe Haagerup and Carl Winsl{\o}w.
\newblock The {{Effros}}\textendash{{Mar\'echal Topology}} in the {{Space}} of
  von {{Neumann Algebras}}, {{II}}.
\newblock {\em Journal of Functional Analysis}, 171(2):401--431, March 2000.
\newblock \href {https://doi.org/10.1006/jfan.1999.3538}
  {\path{doi:10.1006/jfan.1999.3538}}.

\bibitem[Kec95]{kechrisClassicaldescriptiveset1995}
Alexander~S. Kechris.
\newblock {\em Classical Descriptive Set Theory}, volume 156 of {\em Graduate
  {{Texts}} in {{Mathematics}}}.
\newblock {Springer-Verlag, New York}, 1995.
\newblock \href {https://doi.org/10.1007/978-1-4612-4190-4}
  {\path{doi:10.1007/978-1-4612-4190-4}}.

\bibitem[Kur66]{kuratowskiTopologyVolume1966}
K.~Kuratowski.
\newblock {\em Topology : {{Volume I}}}.
\newblock Academic Press, 1966.
\newblock \href {https://doi.org/10.1016/C2013-0-11022-7}
  {\path{doi:10.1016/C2013-0-11022-7}}.

\bibitem[Li92]{liIntroductionOperatorAlgebras1992}
Bing-Ren Li.
\newblock {\em Introduction to Operator Algebras}.
\newblock World Scientific, Singapore, 1992.

\bibitem[Mar73]{marechalTopologieStructureBorelienne1973}
Odile Mar{\'e}chal.
\newblock Topologie et structure bor\'elienne sur l'ensemble des alg\`ebres de
  von {{Neumann}}.
\newblock {\em C. R. Acad. Sci. Paris Ser. I Math.}, 276:847--850, 1973.

\bibitem[Mic51]{michaelTopologiesSpacesSubsets1951}
Ernest Michael.
\newblock Topologies on spaces of subsets.
\newblock {\em Transactions of the American Mathematical Society},
  71(1):152--182, 1951.
\newblock \href {https://doi.org/10.1090/S0002-9947-1951-0042109-4}
  {\path{doi:10.1090/S0002-9947-1951-0042109-4}}.

\bibitem[Mic56]{michaelContinuousSelections1956}
Ernest Michael.
\newblock Continuous {{Selections}}. {{I}}.
\newblock {\em Annals of Mathematics}, 63(2):361--382, 1956.
\newblock \href {https://doi.org/10.2307/1969615} {\path{doi:10.2307/1969615}}.

\bibitem[Nie73]{nielsenBorelSetsNeumann1973}
Ole~A. Nielsen.
\newblock Borel {{Sets}} of {{Von Neumann Algebras}}.
\newblock {\em American Journal of Mathematics}, 95(1):145--164, 1973.
\newblock \href {https://doi.org/10.2307/2373648} {\path{doi:10.2307/2373648}}.

\bibitem[Ped89]{pedersenAnalysisNow1989}
Gert~K. Pedersen.
\newblock {\em Analysis Now}, volume 118 of {\em Grad. {{Texts Math}}.}
\newblock Springer-Verlag, New York etc., 1989.
\newblock \href {https://doi.org/10.1007/978-1-4612-1007-8}
  {\path{doi:10.1007/978-1-4612-1007-8}}.

\bibitem[Rud07]{rudinFunctionalAnalysis2007}
Walter Rudin.
\newblock {\em {Functional Analysis}}.
\newblock Mcgraw Hill Higher Education, New Delhi, 2007.

\bibitem[Sak57]{sakai1}
Sh{\^o}ichir{\^o} Sakai.
\newblock {On topological properties of $W^*$ algebras}.
\newblock {\em Proceedings of the Japan Academy}, 33(8):439 -- 444, 1957.
\newblock \href {https://doi.org/10.3792/pja/1195524953}
  {\path{doi:10.3792/pja/1195524953}}.

\bibitem[Sak13]{sakaiGeometricAspectsGeneral2013}
Katsuro Sakai.
\newblock {\em Geometric {{Aspects}} of {{General Topology}}}.
\newblock Springer {{Monographs}} in {{Mathematics}}. Springer Japan, 2013.
\newblock \href {https://doi.org/10.1007/978-4-431-54397-8}
  {\path{doi:10.1007/978-4-431-54397-8}}.

\bibitem[Sri98]{srivastavacourseBorelsets1998}
S.~M. Srivastava.
\newblock {\em A Course on {{Borel}} Sets}, volume 180 of {\em Graduate
  {{Texts}} in {{Mathematics}}}.
\newblock Springer-Verlag, New York, 1998.
\newblock \href {https://doi.org/10.1007/978-3-642-85473-6}
  {\path{doi:10.1007/978-3-642-85473-6}}.

\end{thebibliography}

\end{document}